\numberwithin{equation}{section}
\numberwithin{figure}{section}
\theoremstyle{plain}
\newtheorem{thm}{\protect\theoremname}
\theoremstyle{definition}
\newtheorem{defn}[thm]{\protect\definitionname}
\theoremstyle{plain}
\newtheorem{cor}[thm]{\protect\corollaryname}
\theoremstyle{definition}
\newtheorem{example}[thm]{\protect\examplename}
\theoremstyle{plain}
\newtheorem{lem}[thm]{\protect\lemmaname}
\theoremstyle{plain}
\newtheorem{prop}[thm]{\protect\propositionname}
\theoremstyle{definition}
\newtheorem{problem}[thm]{\protect\problemname}
\theoremstyle{plain}
\newtheorem*{thm*}{\protect\theoremname}
\theoremstyle{remark}
\newtheorem{rem}[thm]{\protect\remarkname}
\providecommand{\corollaryname}{Corollary}
\providecommand{\definitionname}{Definition}
\providecommand{\examplename}{Example}
\providecommand{\lemmaname}{Lemma}
\providecommand{\problemname}{Problem}
\providecommand{\propositionname}{Proposition}
\providecommand{\remarkname}{Remark}
\providecommand{\theoremname}{Theorem}
\begin{document}

\title{elliptic $(p,q)$-difference modules}

\author{Ehud de Shalit}

\date{June 27, 2020}

\address{Einstein Institute of Mathematics, The Hebrew Univeristy of Jerusalem}

\email{ehud.deshalit@mail.huji.ac.il}

\keywords{Difference equations, elliptic functions}

\subjclass[2000]{39A10, 12H10, 14H52}
\begin{abstract}
Let $p$ and $q$ be multiplicatively independent natural numbers,
and $K$ the field $\mathbb{C}(x^{1/s}|s\in\mathbb{N})$. Let $p$
and $q$ act on $K$ as the Mahler operators $x\mapsto x^{p}$ and
$x\mapsto x^{q}$. In a recent article \cite{Sch-Si} Schäfke and
Singer showed that a finite dimensional vector space over $K$, carrying
commuting structures of a $p$-Mahler module and a $q$-Mahler module,
is obtained via base change from a similar object over $\mathbb{C}$.
As a corollary, they gave a new proof of a conjecture of Loxton and
van der Poorten, which had been proved before by Adamczewski and Bell
\cite{Ad-Be}. When $K=\mathbb{C}(x),$ and $p$ and $q$ are complex
numbers of absolute value greater than 1, acting on $K$ via dilations
$x\mapsto px$ and $x\mapsto qx,$ a similar theorem has been obtained
in \cite{Bez-Bou}. Underlying these two examples is the algebraic
group $\mathbb{G}_{m}$, resp. $\mathbb{G}_{a}$, $K$ is the function
field of its universal covering, and $p$,$q$ act as endomorphisms.

Replacing the multiplicative or additive group by the elliptic curve
$\mathbb{C}/\Lambda$, and $K$ by the maximal unramified extension
of the field of $\Lambda$-elliptic functions, we study similar objects,
which we call \emph{elliptic $(p,q)$-difference modules.} Here $p$
and $q$ act on $K$ via isogenies. When $p$ and $q$ are \emph{relatively
prime}, we give a structure theorem for elliptic $(p,q)$-difference
modules. The proof is based on a Periodicity Theorem, which we prove
in somewhat greater generality. A new feature of the elliptic modules
is that their classification turns out to be fibered over Atiyah's
classification of vector bundles on elliptic curves \cite{At}. Only
the modules whose associated vector bundle is trivial admit a $\mathbb{C}$-structure
as in thc case of $\mathbb{G}_{m}$ or $\mathbb{G}_{a}$, but all
of them can be described explicitly with the aid of (logarithmic derivatives
of) theta functions. We conclude with a proof of an elliptic analogue
of the conjecture of Loxton and van der Poorten.
\end{abstract}

\maketitle

\section{\textcolor{black}{Introduction}}

\subsection{Background}

\subsubsection{Difference equations and difference modules}

A \emph{difference field} $(K,\sigma)$ is a field $K$ equipped with
an automorphism $\sigma\in Aut(K).$ The fixed field $C_{K}$ of $\sigma$
is called its \emph{constant }field\emph{. }A (linear) \emph{difference
equation} over $(K,\sigma)$ is an equation
\begin{equation}
\sigma^{n}(f)+a_{1}\sigma^{n-1}(f)+\cdots+a_{n-1}\sigma(f)+a_{n}f=0\label{eq:q-difference equation}
\end{equation}
where $a_{i}\in K$. One seeks solutions $f$ in $K$ or in an extension
$(L,\sigma)$ of $(K,\sigma)$. 

A long-studied example (\emph{of $q$-difference equations}) occurs
when $K=\mathbb{C}(x)$ or when it is replaced by $\widehat{K}=\mathbb{C}((x))$,
and $\sigma f(x)=f(x/q)$ for $q\in\mathbb{C}^{\times}$, $|q|>1.$
We call such $q$-difference equations \emph{rational }(over $\mathbb{C}(x)$)
or \emph{formal }(over\emph{ $\mathbb{C}((x))$})\emph{.} As another
example, take $K=\mathbb{C}(x^{1/s}|s\in\mathbb{N})$ or replace it
by the field of Puiseux series, and let $\sigma$ be the Mahler operator
$\sigma(x)=x^{q}$ where $q>1$ is a natural number. Such difference
equations are called \emph{Mahler equations}, because Mahler studied
them extensively with relation to transcendence theory (see \cite{Ad}
for a survey). In both cases $C_{K}=\mathbb{C}.$ 

Behind these two examples lies the algebraic group $\mathbb{G}=\mathbb{G}_{a/\mathbb{C}}$
or $\mathbb{G}_{m/\mathbb{C}}$, respectively. Let $K_{0}=\mathbb{C}(x)$
be its function field. The field $K$ is the maximal extension of
$K_{0}$ which is unramified at the points of $\mathbb{G}$ (in the
additive case, the group is simply connected, so $K=K_{0}$). The
automorphism $\sigma$ is induced by an endomorphism of the group.

The study of difference equations goes back to the beginning of the
20th century. It was considered, with relation to $q$-hypergeometric
functions, by Jackson, Adams, Carmichael and more generally by G.D.Birkhoff
\cite{Bi}. By the standard argument used to reduce a linear differential
equation of degree $n$ to a vector-valued equation of degree 1, the
classification of difference equations reduces to that of \emph{difference
modules}, historically introduced much later. We focus from now on
on the latter notion.
\begin{defn}
A \emph{difference module} $(M,\Phi)$ over $(K,\sigma)$ is a finite
dimensional $K$-vector space $M$, equipped with a $\sigma$-linear
bijective endomorphism $\Phi:M\to M.$
\end{defn}

The endomorphism $\Phi$ satisfies
\[
\Phi(av)=\sigma(a)\Phi(v)\,\,\,(v\in M,\,a\in K).
\]
If we fix a basis $(e_{1},\dots,e_{r})$ of $M$ and let $A^{-1}=(a_{ij})$
be the matrix of $\Phi$ in this basis, so that
\[
\Phi(e_{j})=\sum_{i=1}^{r}a_{ij}e_{i},
\]
then we may identify $M$ with $K^{r},$ where
\[
\Phi(v)=A^{-1}\sigma(v)\,\,\,(v\in K^{r}).
\]
The fixed vectors of $\Phi$, corresponding to the solutions of $(\ref{eq:q-difference equation})$,
become the solutions to
\[
\sigma(v)=Av.
\]

A different basis $(e'_{1},\dots,e'_{r})$, related to the first by
the transition matrix $C=(c_{ij})$
\[
e'_{j}=\sum_{i=1}^{r}c_{ij}e{}_{i},
\]
results in a matrix $A'$ related to $A$ by the \emph{gauge transformation
\[
A'=\sigma(C)^{-1}AC.
\]
}The classification of difference modules is therefore equivalent
to the classification of $A\in GL_{r}(K)$, up to gauge transformations.
If we let $\Gamma=\left\langle \sigma\right\rangle \subset Aut(K)$
be the cyclic subgroup generated by $\sigma$, this is the same as
the determination of the non-abelian cohomology
\[
H^{1}(\Gamma,GL_{r}(K)).
\]

For a comprehensive survey of difference equations and their Galois
theory, see \cite{vdP-Si}. If $F$ is a perfect field of characteristic
$p$, $W(F)$ is its ring of Witt vectors, $K=W(F)[1/p]$ and $\sigma$
is the Frobenius of $K$ (lifting $x\mapsto x^{p}$), then a difference
module over $(K,\sigma)$ is an \emph{isocrystal}, a notion central
to $p$-adic Hodge theory.

\bigskip{}

Generalizations are obtained by either of the following two procedures.
\begin{itemize}
\item Replace $\left\langle \sigma\right\rangle $ by a group $\Gamma\subset Aut(K).$
\item Replace $GL_{r}$ by a linear algebraic group $G$ defined over $C_{K}$.
\end{itemize}
The resulting objects might be called ``$\Gamma$\emph{-difference
modules with} $G$\emph{-structure}'', and are again classified by
the non-abelian cohomology
\[
H^{1}(\Gamma,G(K)).
\]

\subsubsection{Rational $(p,q)$-difference modules}

Let $K=\mathbb{C}(x)$, let $p$ and $q$ be complex numbers, $|p|>1$,
$|q|>1$, and assume that $p$ and $q$ are \emph{multiplicatively
independent}, i.e. $p^{n}q^{m}=1$ if and only if $n=m=0.$ We let
\[
\sigma f(x)=f(x/p),\,\,\,\tau f(z)=f(x/q).
\]
The subgroup $\Gamma=\left\langle \sigma,\tau\right\rangle \subset Aut(K)$
is then free abelian of rank $2$. We call a $\Gamma$-difference
module also a $(p,q)$\emph{-difference module. }It is a finite dimensional
$K$-vector space $M,$ equipped with \emph{commuting bijective }endomorphisms
$\Phi_{\sigma},\Phi_{\tau}$ satisfying
\[
\Phi_{\sigma}(av)=\sigma(a)\Phi_{\sigma}(v),\,\,\,\Phi_{\tau}(av)=\tau(a)\Phi_{\tau}(v).
\]

Having fixed a basis, $M$ may be replaced by $K^{r},$ the endomorphisms
$\Phi_{\sigma}$ and $\Phi_{\tau}$ by matrices $A^{-1},B^{-1}\in GL_{r}(K)$
as above, and the commutation relation $\Phi_{\sigma}\circ\Phi_{\tau}=\Phi_{\tau}\circ\Phi_{\sigma}$
by the \emph{consistency condition
\begin{equation}
B(x/p)A(x)=A(x/q)B(x).\label{eq:consistency_equation}
\end{equation}
}The consistent pair $(A,B)$ is well-defined up to the gauge transformation
\begin{equation}
(C(x/p)^{-1}A(x)C(x),\,C(x/q)^{-1}B(x)C(x))\label{eq:guage_transformations}
\end{equation}
where $C\in GL_{r}(K)$.

The multiplicative independence of $p$ and $q$ imposes a remarkable
restriction on $M$.
\begin{thm}
\label{thm:Schafke-Singer Theorem}(\cite{Bez-Bou},\cite{Sch-Si}
Case 2Q) Notation as above, the module $M$ has a basis with respect
to which the matrices $A$ and $B$ are in $GL_{r}(\mathbb{C}),$
and this underlying $\mathbb{C}$-structure of $M$ is then unique.
Equivalently, any two consistent matrices $A,B\in GL_{r}(K)$ may
be reduced by a gauge transformation to a pair $(A_{0},B_{0})$ of
commuting scalar matrices (matrices with entries in $\mathbb{C}$)$,$
which is then unique up to conjugation in $GL_{r}(\mathbb{C}).$ Still
equivalently, the natural map
\[
H^{1}(\Gamma,GL_{r}(\mathbb{C}))\to H^{1}(\Gamma,GL_{r}(K))
\]
is a bijection of pointed sets.
\end{thm}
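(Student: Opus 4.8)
The plan is to reduce Theorem~\ref{thm:Schafke-Singer Theorem} to two assertions together with a formal dictionary. The dictionary: choosing a basis identifies a rank‑$r$ $(p,q)$-difference module with a class in $H^{1}(\Gamma,GL_{r}(K))$, a gauge transformation \eqref{eq:guage_transformations} being a coboundary, while a pair of commuting matrices in $GL_{r}(\mathbb{C})$ — on which $\Gamma$ acts trivially — is the same as a homomorphism $\Gamma\to GL_{r}(\mathbb{C})$, i.e. a class in $H^{1}(\Gamma,GL_{r}(\mathbb{C}))$. So it suffices to prove \emph{(E)} every consistent pair $(A,B)$ is gauge‑equivalent to a pair of commuting constant matrices, and \emph{(R)} the gauge class of such a constant pair pins it down up to $GL_{r}(\mathbb{C})$-conjugation. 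For (E) I would treat separately the behaviour of $(A,B)$ over $\mathbb{C}^{\times}$ and its behaviour at the two cusps $0,\infty$ of $\mathbb{G}_{m}$; for (R) I would analyze the functional equations satisfied by a conjugating matrix.

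For (E), \textbf{Step 1 (no interior singularities).} Let $S\subset\mathbb{C}^{\times}$ be the finite set of points where one of $A,A^{-1},B,B^{-1}$ has a pole. Reading \eqref{eq:consistency_equation} as $A(x/q)=B(x/p)A(x)B(x)^{-1}$ and as $B(x/p)=A(x/q)B(x)A(x)^{-1}$, the local behaviour of the pair at $x_{0}$ is determined by, and determines, its behaviour at $px_{0}$ and at $qx_{0}$. Hence, up to gauge, $S$ should be forced to be empty: its $\Gamma$-saturation, where $\Gamma\cong\mathbb{Z}^{2}$ acts on $\mathbb{C}^{\times}$ by $x_{0}\mapsto p^{n}q^{m}x_{0}$, would be a nonempty $\Gamma$-stable set, yet every orbit is infinite ($p,q$ being multiplicatively independent) while $S$ is finite. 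Turning this into an actual reduction is the crux: fix a numerical measure of $S$ (a weighted count of pole orders of $A^{\pm1},B^{\pm1}$), minimize it over the gauge class, pick a point of $S$ extremal for a suitable ordering of its orbit, and use \eqref{eq:consistency_equation} — together with the fact that a $\langle p\rangle$-orbit and a $\langle q\rangle$-orbit in $\mathbb{C}^{\times}$ meet in at most one point — to produce a $\Gamma$-equivariant family of local gauge matrices that strictly lowers the measure, a contradiction. I expect this simultaneous local normalization, compatible with both lattices of shifts, to be the main obstacle. Once it is done, $A,B\in GL_{r}(\mathbb{C}[x,x^{-1}])$, and taking determinants in \eqref{eq:consistency_equation}, where $\det A,\det B\in\mathbb{C}[x,x^{-1}]^{\times}=\mathbb{C}^{\times}x^{\mathbb{Z}}$, multiplicative independence of $p$ and $q$ forces $\det A,\det B\in\mathbb{C}^{\times}$.

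\textbf{Step 2 (no singularities at $0,\infty$).} Write $A=\sum_{i=m}^{M}A_{i}x^{i}$ and $B=\sum_{j=n}^{N}B_{j}x^{j}$ with the extreme coefficients nonzero, and induct on $w=(M-m)+(N-n)$. If $w=0$, then $A=A_{m}x^{m}$ and $B=B_{n}x^{n}$ with $A_{m},B_{n}\in GL_{r}(\mathbb{C})$, and \eqref{eq:consistency_equation} becomes $p^{-n}B_{n}A_{m}=q^{-m}A_{m}B_{n}$; its determinant gives $p^{-rn}=q^{-rm}$, hence $m=n=0$ by multiplicative independence, so $A=A_{0}$, $B=B_{0}$ are constant and the identity reads $B_{0}A_{0}=A_{0}B_{0}$ — a commuting constant pair, as desired. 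If $w>0$, one equates the extreme coefficients of $x$ in \eqref{eq:consistency_equation} (e.g. $p^{-N}B_{N}A_{M}=q^{-M}A_{M}B_{N}$ at the top, and the analogue at the bottom) and uses these relations to build a gauge transformation by a matrix in $GL_{r}(\mathbb{C}[x,x^{-1}])$ — a block shear times a diagonal of powers of $x$ — strictly lowering $w$; this is the difference analogue of reducing the Newton polygon of a single module, carried out compatibly with both $\sigma$ and $\tau$.

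For (R), suppose $(A_{0},B_{0})$ and $(A_{0}',B_{0}')$ are commuting constant pairs and $C\in GL_{r}(K)$ satisfies $C(x/p)=A_{0}C(x)(A_{0}')^{-1}$ and $C(x/q)=B_{0}C(x)(B_{0}')^{-1}$. The zeros and poles of $\det C$ in $\mathbb{C}^{\times}$, and the poles of $C$ itself, form finite $\Gamma$-stable sets, hence are empty, so $C\in GL_{r}(\mathbb{C}[x,x^{-1}])$. Writing $C=\sum_{i}C_{i}x^{i}$ and inserting this into the two equations gives $C_{i}A_{0}'=p^{i}A_{0}C_{i}$ and $C_{i}B_{0}'=q^{i}B_{0}C_{i}$ for every $i$; since the pairs commute one passes to joint generalized eigenspaces, and a nonzero $C_{i}$ then forces part of the joint spectrum of $(A_{0},B_{0})$ to equal $(p^{-i},q^{-i})$ times part of that of $(A_{0}',B_{0}')$. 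Extracting from this that $C$ is, after absorbing at most a diagonal twist by powers of $x$ permitted between the two normal forms, a $GL_{r}(\mathbb{C})$-conjugation, yields the uniqueness. Together with (E) this shows that $H^{1}(\Gamma,GL_{r}(\mathbb{C}))\to H^{1}(\Gamma,GL_{r}(K))$ is onto with the asserted fibres, completing the proof of Theorem~\ref{thm:Schafke-Singer Theorem}.
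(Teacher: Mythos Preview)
The paper does not prove Theorem~\ref{thm:Schafke-Singer Theorem}; it is quoted from \cite{Bez-Bou} and \cite{Sch-Si} as background for the elliptic analogue. So there is no ``paper's own proof'' to compare against directly. That said, the method used in \cite{Sch-Si} and in this paper for the elliptic case is quite different from yours: one first works over the completion $\widehat{K}=\mathbb{C}((x))$ and uses the slope theory of $p$-difference modules (cf.\ Section~\ref{sec:Formal--difference-modules}) to show that multiplicative independence forces slope~$0$, hence a \emph{formal} gauge $C\in GL_{r}(\widehat{K})$ reduces $(A,B)$ to constants; then a convergence estimate shows $C$ is meromorphic; finally a global argument (trivial over $\mathbb{G}_{a}$, since there are no nontrivial vector bundles) gives $C\in GL_{r}(K)$.

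Your proposal, by contrast, tries to work globally from the start, and the hard parts are not actually done. In Step~1 you yourself say ``turning this into an actual reduction is the crux'' and ``I expect this simultaneous local normalization\dots\ to be the main obstacle'': indeed it is, and the sentence ``minimize a measure, pick an extremal point, produce a $\Gamma$-equivariant family of local gauge matrices that strictly lowers the measure'' is a wish, not a construction. A local gauge matrix killing a pole of $A$ at $x_{0}$ will in general create new poles elsewhere and interact with $B$ through \eqref{eq:consistency_equation}; you give no mechanism ensuring the measure decreases. In Step~2 the base case $w=0$ is fine, but the inductive step is again only asserted: from $p^{-N}B_{N}A_{M}=q^{-M}A_{M}B_{N}$ with $A_{M},B_{N}$ possibly singular you do not explain how to manufacture a Laurent-polynomial gauge that lowers $w$ for \emph{both} $A$ and $B$ simultaneously. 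Even for a single operator this is the content of the slope filtration, and making it compatible with two operators is precisely the theorem.

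Your rigidity argument (R) is essentially correct and close to the standard one (compare the proof of Theorem~\ref{thm:Formal main theorem} and Lemma~\ref{lem:uniqueness}); but (E) as written is a plan with the two decisive steps left open.
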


\subsection{Elliptic $q$- and $(p,q)$-difference modules and the main result}

\subsubsection{Our set-up}

The goal of the present paper is to study an elliptic analogue\emph{}\footnote{\emph{As explained in \cite{Sa}, for example, the study of fuchsian
(rational) $q$-difference equations leads, by a method of Birkhoff,
to the consideration of elliptic functions on the elliptic curve $\mathbb{C}^{\times}/\left\langle q\right\rangle $.
As far as we can see this is unrelated, and should not be confused,
with our set-up.}} of the rational $(p,q)$-difference modules. Let $\Lambda_{0}\subset\mathbb{C}$
be a lattice and $K_{0}$ the field of $\Lambda_{0}$-elliptic functions.
We recall that
\[
K_{0}=\mathbb{C}(\wp(z,\Lambda_{0}),\wp'(z,\Lambda_{0}))
\]
where $\wp(z,\Lambda_{0})$ is the Weierstrass $\wp$-function of
the lattice $\Lambda_{0}$. Let
\[
K=K_{0}^{nr}
\]
be the maximal unramified extension of $K_{0}$. This is the union
of the fields $K_{\Lambda}$ of $\Lambda$-elliptic functions for
all sublattices $\Lambda\subset\Lambda_{0}.$ We emphasize that $K$
depends only on the commensurability class of $\Lambda_{0}.$ Replacing
$\Lambda_{0}$ by any commensurable lattice, e.g. by a sublattice,
leads to the same field $K$.

Let $p,q$ be multiplicatively independent positive integers. If $\Lambda_{0}$
has complex multiplication we can take any two multiplicatively independent
endomorphisms of the elliptic curve $X_{0}=\mathbb{C}/\Lambda_{0}$,
but to simplify the presentation we do not treat this case. Then
\[
\sigma f(z)=f(z/p),\,\,\,\tau f(z)=f(z/q)
\]
are commuting \emph{automorphisms} of the field $K$, because $K_{\Lambda}\subset\sigma(K_{\Lambda})\subset K_{p\Lambda}$
for every lattice $\Lambda\subset\Lambda_{0}$ and similarly with
$\tau$. The group
\[
\Gamma=\left\langle \sigma,\tau\right\rangle \subset Aut(K).
\]
is free abelian of rank $2$.

An \emph{elliptic} $(p,q)$-\emph{module} is defined, exactly as in
the rational case, as a finite dimensional $K$-vector space $M$,
equipped with commuting $\sigma$-linear (resp. $\tau$-linear) bijective
endomorphisms $\Phi_{\sigma}$ (resp. $\Phi_{\tau}).$ Such a module
$M$ is determined, up to isomorphism, by a pair $(A,B)$ of matrices
from $GL_{r}(K)$ satisfying $(\ref{eq:consistency_equation}),$ up
to the gauge transformation $(\ref{eq:guage_transformations})$. Thus
$A$ and $B$ will be matrices of $\Lambda$-elliptic functions for
$\Lambda\subset\Lambda_{0}$ small enough. Explicitly, to $(A,B)$
we associate $M=K^{r}$ with the endomorphisms
\[
\Phi_{\sigma}v=A^{-1}\sigma(v),\,\,\,\Phi_{\tau}v=B^{-1}\tau(v).
\]

The isomorphism classes of elliptic $(p,q)$-modules of rank $r$
are classified therefore by
\[
H^{1}(\Gamma,G(K))
\]
where, from now on, to simplify the notation, we put $G=GL_{r}$.

\subsubsection{An example}

In \cite{dS1} we proved the analogue of Theorem \ref{thm:Schafke-Singer Theorem}
when $r=1.$ Namely, we showed that the map
\[
H^{1}(\Gamma,\mathbb{C}^{\times})\to H^{1}(\Gamma,K^{\times})
\]
is bijective. Thus every rank-1 module is isomorphic to $M_{1}(a,b)$
for unique $a,b\in\mathbb{C}^{\times},$ where the standard module
$M_{1}(a,b)$ is the vector space $K$ with $\Phi_{\sigma}(v)=a^{-1}\sigma(v)$
and $\Phi_{\tau}(v)=b^{-1}\tau(v).$

This is false in higher rank. For $r\ge2,$ the map from $H^{1}(\Gamma,G(\mathbb{C}))$
to $H^{1}(\Gamma,G(K))$ is injective, but not surjective. At this
point we want to give an example of a rank-2 $(p,q)$-difference module,
which does not arise from a $(p,q)$-difference module over $\mathbb{C}$
by extension of scalars. This example will turn out to be typical.

Fix a lattice $\Lambda\subset\Lambda_{0}$ and let
\[
\sigma(z,\Lambda)=z\prod_{0\ne\omega\in\Lambda}(1-\frac{z}{\omega})e^{\frac{z}{\omega}+\frac{1}{2}(\frac{z}{\omega})^{2}}
\]
be the Weierstrass sigma function associated to $\Lambda.$ Its logarithmic
derivative
\begin{equation}
\zeta(z,\Lambda)=\frac{\sigma'(z,\Lambda)}{\sigma(z,\Lambda)}\label{eq:Weierstrass zeta}
\end{equation}
is known as the Weierstrass zeta-function. It is holomorphic outside
$\Lambda,$ has a simple pole with residue $1$ at every $\omega\in\Lambda$,
and satisfies
\[
\zeta(z+\omega,\Lambda)=\zeta(z,\Lambda)+\eta(\omega,\Lambda)
\]
for some homomorphism $\eta(\cdot,\Lambda):\Lambda\to\mathbb{C}$,
named after Legendre. Its derivative $\zeta'(z,\Lambda)=-\wp(z,\Lambda).$

The functions
\[
\begin{cases}
\begin{array}{c}
g_{p}(z,\Lambda)=p\zeta(qz,\Lambda)-\zeta(pqz,\Lambda)\\
g_{q}(z,\Lambda)=q\zeta(pz,\Lambda)-\zeta(pqz,\Lambda)
\end{array}\end{cases}
\]
are consequently $\Lambda$-elliptic. Moreover, $g_{p}$ is even $q^{-1}\Lambda$-elliptic,
has simple poles only, and its residual divisor $Res_{\Lambda}(g_{p})$
on the curve $X_{\Lambda}=\mathbb{C}/\Lambda$ satisfies
\[
pqRes_{\Lambda}(g_{p})=p^{2}\sum_{\xi\in q^{-1}\Lambda/\Lambda}[\xi]-\sum_{\xi\in p^{-1}q^{-1}\Lambda/\Lambda}[\xi].
\]
An analogous formula holds for $g_{q}.$ The relation
\[
g_{p}(z,\Lambda)-qg_{p}(z/q,\Lambda)=g_{q}(z,\Lambda)-pg_{q}(z/p,\Lambda)
\]
implies that if we define
\[
A(z)=\left(\begin{array}{cc}
1 & g_{p}(z,\Lambda)\\
0 & p
\end{array}\right),\,\,\,B(z)=\left(\begin{array}{cc}
1 & g_{q}(z,\Lambda)\\
0 & q
\end{array}\right),
\]
the consistency equation $(\ref{eq:consistency_equation})$ is satisfied.
The \emph{standard special module} $M_{2}^{sp}$ will have $K^{2}$
as an underlying vector space,
\[
\Phi_{\sigma}v=A^{-1}\sigma(v),\,\,\,\Phi_{\tau}v=B^{-1}\tau(v).
\]
Up to isomorphism, this module does not depend on the lattice $\Lambda.$
As we shall show, it does not arise from a scalar module by extension
of scalars from $\mathbb{C}$ to $K$, and up to a twist by $M_{1}(a,b)$,
is the only such rank-$2$ $(p,q)$-difference module.

\subsubsection{Standard modules}

Let $(K,\sigma,\tau)$ be as above. The following construction, important
for the formulation of the main theorem below, generalizes the special
example above. It will be studied in more detail in section \ref{subsec:Two extreme cases}.

Let $N_{r}=(n_{ij})$ be the nilpotent $r\times r$ matrix with $n_{ij}=1$
if $j=i+1$ and $0$ elsewhere. Let
\[
U_{r}(z)=\exp(\zeta(pqz,\Lambda)N_{r}),
\]
and let $T_{r}^{sp}=diag[1,p,\dots,p^{r-1}],$ $S_{r}^{sp}=diag[1,q,\dots,q^{r-1}].$
Then
\[
A_{r}^{sp}(z)=U_{r}(z/p)T_{r}^{sp}U_{r}(z)^{-1},\,\,\,B_{r}^{sp}(z)=U_{r}(z/q)S_{r}^{sp}U_{r}(z)^{-1}
\]
lie in $G(K_{\Lambda})$ and satisfy the consistency equation. In
fact, $A_{r}^{sp}=(a_{ij})$ is upper-triangular and for $i\le j$
\[
a_{ij}=p^{i-1}\frac{g_{p}^{j-i}}{(j-i)!}.
\]
A similar equation holds for $B_{r}^{sp}.$ We call the elliptic $(p,q)$-difference
module associated with the pair $(A_{r}^{sp},B_{r}^{sp})$ the \emph{standard
special module} of rank $r$, and denote it by $M_{r}^{sp}.$ For
$a,b\in\mathbb{C}^{\times}$ put
\[
M_{r}^{sp}(a,b)=M_{r}^{sp}\otimes M_{1}(a,b).
\]

\subsubsection{The Main Theorem}

For our main theorem to hold we have to assume, as we shall do from
now on, that $p$ and $q$ are relatively prime. We do not know if
the weaker assumption of multiplicative independence suffices.

Let $M$ be an elliptic $(p,q)$-difference module over $K$. In section
$\ref{subsec:Associating a vector bundle}$ we explain how to associate
with $M$ a vector bundle $\mathcal{E}$ on the elliptic curve $X_{\Lambda}=\mathbb{C}/\Lambda$
for all $\Lambda\subset\Lambda_{0}$ sufficiently small. These vector
bundles are compatible under pull-back with respect to the maps $X_{\Lambda'}\to X_{\Lambda}$
if $\Lambda'\subset\Lambda,$ and for all sufficiently small $\Lambda$
are of the (same) form
\[
\mathcal{E}\simeq\bigoplus_{i=1}^{k}\mathcal{F}_{r_{i}}
\]
for unique $r_{1}\le r_{2}\le\cdots\le r_{k},$ $\sum r_{i}=r.$ Here
$\mathcal{F}_{r}$ is the unique indecomposable vector bundle of rank
$r$ and degree 0 on $X_{\Lambda}$ with non-zero global sections,
sometimes called Atiyah's bundle of rank $r$ (see section \ref{sec:Vector-bundles-on}).
We call $(r_{1},\dots,r_{k})$ the \emph{type} of $M.$
\begin{thm}[Structure Theorem]
 \label{thm:Main} Let $p\ge2$ and $q\ge2$ be relatively prime
integers. Let $M$ be an elliptic $(p,q)$-difference module of rank
$r$, and let $(r_{1},\dots,r_{k})$ be its type, $r_{1}\le r_{2}\le\cdots\le r_{k},$
$\sum_{i=1}^{k}r_{i}=r.$ Let
\[
U(z)=\oplus_{i=1}^{k}U_{r_{i}}(z)
\]
in block-diagonal form. Then, in an appropriate basis, $M$ is represented
by a consistent pair $(A,B)$ of matrices from $G(K)$ for which
\[
U(z/p)^{-1}A(z)U(z)=T,\,\,\,U(z/q)^{-1}B(z)U(z)=S
\]
are commuting \emph{scalar} matrices (i.e. matrices in $G(\mathbb{C})$).
\end{thm}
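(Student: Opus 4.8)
The plan is to reduce the Structure Theorem to the rank‑one result and to a cohomological vanishing statement that is fed by the Periodicity Theorem alluded to in the introduction. First I would use the classification of the associated vector bundle: having fixed a sufficiently small lattice $\Lambda$, the bundle $\mathcal E$ attached to $M$ is $\bigoplus_i\mathcal F_{r_i}$, and one knows $\mathrm{End}(\mathcal F_r)$ and $\mathrm{Hom}(\mathcal F_{r_i},\mathcal F_{r_j})$ explicitly from Atiyah's theory. I would translate ``$M$ has type $(r_1,\dots,r_k)$'' into the assertion that, in a basis adapted to a filtration by the $\mathcal F_{r_i}$'s, the matrices $A,B$ lie in a specific subgroup $P(K)\subset G(K)$ — block upper‑triangular with $i$‑th diagonal block of the shape $U_{r_i}(z/p)\,T_i\,U_{r_i}(z)^{-1}$ for some $T_i\in GL_{r_i}(\mathbb C)$ (and similarly for $B$), where $U_r(z)=\exp(\zeta(pqz,\Lambda)N_r)$ as in the standard construction. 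The content of ``$\mathcal E$ is of this form for all small $\Lambda$'' is precisely what forces the diagonal blocks into this normal form after passing to a deeper lattice; this is where the Periodicity Theorem does its work, so I would invoke it to put the diagonal blocks of $(A,B)$ into standard shape to begin with.

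Next, conjugating by $U(z)=\bigoplus_i U_{r_i}(z)$, the diagonal blocks become the constant matrices $T_i,S_i$, and the whole pair $(A,B)$ is carried into a pair $(\tilde A,\tilde B)$ that is block upper‑triangular with constant diagonal blocks; the off‑diagonal blocks are matrices of elliptic functions satisfying the linearized consistency equations. The problem is thus reduced to killing the strictly‑upper‑triangular part by a gauge transformation $C\in \mathrm{Id}+\mathfrak n(K)$, where $\mathfrak n$ is the strictly block‑upper‑triangular nilpotent part of $\mathrm{Lie}\,P$. Since $\mathfrak n$ has a filtration by $\Gamma$‑stable pieces with graded quotients of the form $\mathrm{Hom}(V_{r_i},V_{r_j})\otimes$ (the difference module underlying $\mathcal Hom(\mathcal F_{r_i},\mathcal F_{r_j})$ type objects), the obstruction to splitting lives step by step in $H^1(\Gamma, N_{ij})$ for these graded pieces, and one finishes by downward induction on the filtration. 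I would therefore need a lemma: for the relevant ``$\mathrm{Hom}$‑type'' elliptic $(p,q)$‑difference modules $N$ — those whose associated bundle is a sum of $\mathcal F_{r}$'s and which are themselves already in the $U$‑normal form — we have $H^1(\Gamma, N)=0$, i.e. every $1$‑cocycle is a coboundary. This is again a consequence of the Periodicity Theorem: periodicity of solutions of the two‑variable difference system forces a putative cocycle to be supported on a finite‑dimensional space of ``theta‑like'' functions, and there the coboundary condition is solved by the same $\zeta$‑function primitives that produce $g_p,g_q$.

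The main obstacle I expect is precisely this vanishing of $H^1(\Gamma,-)$ for the $\mathrm{Hom}$‑type modules, equivalently the explicit solvability of the two inhomogeneous difference equations $\varphi(z/p)-T'\varphi(z)T^{-1}=(\text{elliptic})$, $\varphi(z/q)-S'\varphi(z)S^{-1}=(\text{elliptic})$ simultaneously, modulo coboundaries. The commuting pair $(T,T'),(S,S')$ decomposes the system into generalized eigen‑blocks indexed by pairs of eigenvalues $(\lambda_T,\lambda_S)$, and here relative primality of $p,q$ is essential: for $\lambda_T=\lambda_S=1$ the obstruction is a residue/degree count on $X_\Lambda$ that vanishes only because $\gcd(p,q)=1$ (this is visibly the role played by $pq\,\mathrm{Res}_\Lambda(g_p)=p^2\sum_{q^{-1}\Lambda/\Lambda}[\xi]-\sum_{p^{-1}q^{-1}\Lambda/\Lambda}[\xi]$ in the rank‑$2$ example), while for other eigenvalue pairs one uses multiplicative independence together with a growth/periodicity argument to rule out any obstruction. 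I would organize the proof so that all the analytic input — convergence of the $\zeta$‑series, the Legendre relation, and the periodicity of bounded solutions — is quarantined inside the cited Periodicity Theorem and the rank‑one result of \cite{dS1}, leaving the present argument as linear algebra over the nilpotent group $\mathrm{Id}+\mathfrak n(K)$ together with the single $H^1$‑vanishing lemma.
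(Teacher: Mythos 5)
Your proposal has two genuine gaps, one structural and one in the key lemma. First, the translation of ``$M$ has type $(r_{1},\dots,r_{k})$'' into ``$(A,B)$ lie in a block upper-triangular subgroup $P(K)$ with diagonal blocks already in $U$-normal form'' is unjustified and, as stated, false. The type is defined through the vector bundle attached to the $\Lambda$-periodic modification of $\overline{C}$, where $C\in G(\widehat{K})$ is the formal gauge matrix; what this yields is only the factorization $C=UD$ with $D$ holomorphically invertible away from $0$, hence $A(z)=U(z/p)T(z)U(z)^{-1}$ with $T(z)=D(z/p)A_{0}D(z)^{-1}$ \emph{everywhere holomorphic} --- not elliptic, not block-triangular, and not constant. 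The passage from ``$T$ holomorphic and $A$ elliptic'' to ``$T$ constant'' is the heart of the proof (Lemmas \ref{lem:Last row}, \ref{lem:indecomposable canonical form} and \ref{lem:off-diagonal}), carried out by a bootstrapping induction on rows and columns using only that an everywhere holomorphic elliptic function is constant and that the residues of an elliptic function sum to zero. Your proposal contains no substitute for this step; it is simply asserted that ``the Periodicity Theorem does its work'' here, but that theorem only controls the coset of $C$ modulo $G(\mathscr{O})$, i.e.\ the bundle, not the normal form of $T$. Moreover $T$ and $S$ need not be block upper-triangular: in Proposition \ref{prop:Rank 3 modules}, class (iv), the block of $T$ below the diagonal carries a nonzero constant $t'$, so a parabolic subgroup is the wrong ambient group.

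Second, the proposed vanishing $H^{1}(\Gamma,N)=0$ for the Hom-type modules is false. If every cocycle valued in the strictly off-diagonal part were a coboundary, every module would split as a direct sum of twisted standard special modules; Proposition \ref{prop:Rank 3 modules}(iii),(iv) exhibits non-split extensions parametrized by $\mathbb{P}^{1}(\mathbb{C})$. What is true, and what the theorem asserts, is that the off-diagonal blocks of $T$ and $S$ can be made \emph{constant}, of the specific corner shape of Lemma \ref{lem:off-diagonal}, not zero; the surviving constants are exactly the nontrivial extension classes. Relatedly, you locate the use of $\gcd(p,q)=1$ in a residue count inside the $H^{1}$ computation, but the (Res) arguments work for any $p,q$; relative primality enters only in the Periodicity Theorem (through the combinatorial lemma on the equivalence relations $\sim_{S},\sim_{T}$, and through the fact that $p^{2n_{p}}\Lambda$ and $q^{2n_{q}}\Lambda$ generate $\Lambda$). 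The overall idea of reducing to rank one plus a statement about extensions is reasonable in spirit, but as organized the argument both assumes the hardest step and rests on a vanishing statement that the classification in rank $3$ already refutes.
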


For a more precise statement, see Theorem \ref{thm:Main Structure Theorem}.
\begin{cor}
(i) The module $M$ admits a $\mathbb{C}$-structure if and only if
its type is $(1,1,\dots1).$

(ii) If the type of $M$ is $(r)$ (equivalently, $\mathcal{E}$ is
indecomposable), then $M\simeq M_{r}^{sp}(a,b)$ for some $a,b\in\mathbb{C}^{\times}.$
\end{cor}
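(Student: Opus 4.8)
The plan is to read the Corollary off Theorem~\ref{thm:Main} by a hands-on analysis of the matrices $U_r(z)=\exp(\zeta(pqz,\Lambda)N_r)$, using Atiyah's classification only for the uniqueness of the decomposition $\mathcal E\simeq\bigoplus\mathcal F_{r_i}$. For part~(i), if the type of $M$ is $(1,\dots,1)$ then each $r_i=1$, so $N_{r_i}$ is the $1\times1$ zero matrix, $U_{r_i}(z)=I$, and hence $U(z)=I$; Theorem~\ref{thm:Main} then provides a basis in which $A=T$ and $B=S$ already lie in $G(\mathbb C)$, so $M$ has a $\mathbb C$-structure. Conversely, if $M$ has a $\mathbb C$-structure it is represented by a \emph{constant} consistent pair $(A_0,B_0)$; since the bundle $\mathcal E$ of section~\ref{subsec:Associating a vector bundle} is an invariant of $M$ and, computed from a constant representing pair, it is the trivial bundle, we get $\mathcal E\simeq\mathcal O_{X_\Lambda}^{\oplus r}=\bigoplus_{i=1}^{r}\mathcal F_1$, and uniqueness in Atiyah's decomposition~\cite{At} forces the type to be $(1,\dots,1)$.

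For part~(ii) assume the type is $(r)$, so $U(z)=U_r(z)$ and, by Theorem~\ref{thm:Main}, $M$ is represented by $A(z)=U_r(z/p)TU_r(z)^{-1}$ and $B(z)=U_r(z/q)SU_r(z)^{-1}$ with $T,S\in G(\mathbb C)$ commuting. The first step is to extract from $A,B\in G(K)$ the restriction on $T$ and $S$. Using $N_r^{r}=0$ and $\zeta(w+\mu,\Lambda)=\zeta(w,\Lambda)+\eta(\mu,\Lambda)$ one computes, for $\omega\in\Lambda$, that $A(z+\omega)=\exp(q\eta(\omega)N_r)\,A(z)\,\exp(-pq\eta(\omega)N_r)$; thus $A$ is $\Lambda$-elliptic iff $\exp(qcN_r)T=T\exp(pqcN_r)$ for all $c$ in the lattice $\eta(\Lambda)$, and — both sides being polynomials in $c$ that agree on an infinite set — this is equivalent to $N_rT=pTN_r$; likewise $B\in G(K)$ is equivalent to $N_rS=qSN_r$ (the consistency relation, once the $U_r$'s cancel, says only $TS=ST$). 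Because $N_r$ is regular nilpotent its centralizer is $\mathbb C[N_r]$, and one has $N_rT_r^{sp}=pT_r^{sp}N_r$ and $N_rS_r^{sp}=qS_r^{sp}N_r$; hence $T=T_r^{sp}E$ and $S=S_r^{sp}F$ for unique $E,F\in\mathbb C[N_r]^{\times}$.

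The second step is to normalize $(E,F)$ using the gauge transformations that preserve this form. Conjugation by a constant $D\in\mathbb C[N_r]^{\times}$ commutes with every $U_r(w)$, hence keeps the pair of the above shape; writing $D=Q(N_r)$ and identifying $\mathbb C[N_r]$ with $\mathbb C[t]/(t^{r})$ via $N_r\leftrightarrow t$, such a conjugation replaces $(E,F)$ by $\big(Q(t)Q(pt)^{-1}E,\ Q(t)Q(qt)^{-1}F\big)$. Set $a=E(0)$, $b=F(0)\in\mathbb C^{\times}$ and $\bar E=E/a$, $\bar F=F/b$; then it suffices to find $Q$ with $Q(0)=1$ satisfying $Q(pt)=Q(t)\bar E(t)$ and $Q(qt)=Q(t)\bar F(t)$, for then conjugation by $Q(N_r)$ carries $(T,S)$ to $(aT_r^{sp},bS_r^{sp})$, i.e. carries $(A,B)$ to $(aA_r^{sp},bB_r^{sp})$, which represents $M_r^{sp}(a,b)$. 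The first equation is solved recursively with $Q(0)=1$: at order $t^{j}$ it reads $(p^{j}-1)Q_{j}=(\text{lower-order terms})$, which is solvable since $p\ge 2$.

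The main obstacle — the one genuinely non-formal point — is that this same $Q$ also solves the $q$-equation. This is where $TS=ST$ enters: conjugating that identity through $T_r^{sp}$ and $S_r^{sp}$ turns it into $\bar E(qt)\bar F(t)=\bar F(pt)\bar E(t)$, equivalently $\log\bar E(qt)-\log\bar E(t)=\log\bar F(pt)-\log\bar F(t)$ in $\mathbb C[t]/(t^{r})$; writing $R=\log Q$, the two equations for $Q$ read $R(pt)-R(t)=\log\bar E$ and $R(qt)-R(t)=\log\bar F$, and substituting the first into the translated commutativity relation shows that $H(t):=R(qt)-R(t)-\log\bar F(t)$ satisfies $H(pt)=H(t)$ with $H(0)=0$, whence $H=0$ because $p\ge 2$. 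This yields the required $Q$, hence $M\simeq M_r^{sp}(a,b)$, and completes the proof of the Corollary.
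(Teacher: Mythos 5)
Your proposal is correct. Part (i) follows the paper's own route exactly: the forward direction is immediate from Theorem \ref{thm:Main} since $U=I$ when all $r_i=1$, and the converse is the observation (made at the end of the proof of Theorem \ref{thm:Main Structure Theorem}) that a module with a $\mathbb{C}$-structure has trivial associated bundle, so Atiyah's uniqueness forces type $(1,\dots,1)$.

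For part (ii) you take a genuinely different route. The paper does not in fact derive (ii) from Theorem \ref{thm:Main}; it proves it \emph{en route} to that theorem, in Lemma \ref{lem:indecomposable canonical form}, starting from the weaker input of Corollary \ref{cor:Intermediate step} (where $T(z),S(z)$ are only known to be everywhere holomorphic) and running a row-by-row decreasing induction with the two principles (Hol) and (Res) on elliptic functions to establish simultaneously that $T,S$ are scalar and conjugate to $(aT_r^{sp},bS_r^{sp})$ by a unipotent $\exp(\sum\lambda_\ell N_r^\ell)$. You instead use the full strength of Theorem \ref{thm:Main} (scalarity and commutativity of $T,S$) as a black box and replace the residue bootstrapping by linear algebra: the $\Lambda$-periodicity of $A=U_r(z/p)TU_r(z)^{-1}$ forces $\exp(qcN_r)T=T\exp(pqcN_r)$ for the infinitely many $c\in\eta(\Lambda)$ (infinite by Legendre's relation --- it need not be a lattice, but that is all you use), hence $N_rT=pTN_r$, so $T\in T_r^{sp}\,\mathbb{C}[N_r]^{\times}$ by regularity of $N_r$; the normalization then becomes the formal functional equation $Q(pt)=Q(t)\bar E(t)$ in $\mathbb{C}[t]/(t^r)$, solvable since $p^j\neq 1$, with compatibility of the $q$-equation extracted from $TS=ST$ via the $H(pt)=H(t)$ trick. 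I checked the computations ($U_r(z+\omega)=U_r(z)\exp(pq\,\eta(\omega)N_r)$, the translation of $TS=ST$ into $\bar E(qt)\bar F(t)=\bar F(pt)\bar E(t)$, and the vanishing of $H$); they are all right, up to an immaterial choice of whether one conjugates by $Q(N_r)$ or its inverse. Your approach buys a cleaner, purely algebraic normalization and an explicit parametrization of the admissible pairs $(T,S)$; the paper's analytic argument is of course still needed upstream, since it is what proves Theorem \ref{thm:Main} in the first place, but as a derivation of the Corollary \emph{from} the stated theorem yours is self-contained and arguably more transparent.
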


As an example, we work out a complete classification of the modules
of rank $r\le3.$ In higher rank, such a classification is in principle
possible, but becomes unwieldy.

\subsection{Contents of the paper}

\subsubsection{The Periodicity Theorem}

The proof of the main theorem rests on a Periodicity Theorem (Theorem
\ref{thm:Periodicity theorem} below), which is a vast generalization
of the criterion proved in \cite{dS1}. The idea of the proof is nevertheless
the same, and the reader may want to get acquainted first with the
special case treated there. Anticipating future generalizations, in
which we replace elliptic curves by higher genus abelian varieties,
or the group $G=GL_{r}$ by a general reductive group, this periodicity
theorem is phrased, and proved, in greater generality than needed
for the application. We did not see, however, any advantage in restricting
its scope, as the proof would have been just the same.

\subsubsection{Vector bundles on elliptic curves}

With the Periodicity Theorem at hand, the proof of Theorem \ref{thm:Main}
can be described as follows.

Let $M$ be an elliptic $(p,q)$-difference module, and $(A,B)$ a
consistent pair of matrices representing it in some basis. Let $\widehat{K}=\mathbb{C}((z))$
be the completion of $K$ at the origin. Using well-known results,
explained in the last chapter of \cite{vdP-Si}, and recalled in section
\ref{sec:Formal--difference-modules}, the pair $(A,B)$ may be transformed
into a scalar commuting pair $(A_{0},B_{0})$ by a gauge transformation
$(\ref{eq:guage_transformations})$ with $C\in G(\widehat{K})$. An
approximation argument, based on the denseness of $K$ in $\widehat{K}$,
together with standard estimates, show that, after replacing $(A,B)$
by a pair which is gauge equivalent \emph{over} $K,$ $C$ may be
taken to be holomorphic in a neighborhood of 0. The equation
\[
A_{0}=C(z/p)^{-1}A(z)C(z)
\]
implies that it is globally meromorphic. Had $C$ been periodic, i.e.
a matrix of elliptic functions, we would have been finished. This,
unfortunately (or fortunately, depending on one's attitude), is false,
as we saw in the rank 2 example above.

The key idea is to interpret the relation between $C$ and $A$ (or
$B$) as suggesting something weaker, but still meaningful. Consider
the sheaf $\mathscr{F}=G(\mathscr{M})/G(\mathscr{O})$, where $\mathscr{O}\subset\mathscr{M}$
are the sheaves of holomorphic and meromorphic functions on $\mathbb{C}$.
This is the type of sheaf to which our Periodicity Theorem applies.
While $C$ itself is not necessarily periodic, $\overline{C},$ its
image in the global sections of $\mathscr{F},$ turns out, as a consequence
of the Periodicity Theorem, to be $\Lambda$-periodic for some lattice
$\Lambda\subset\Lambda_{0}.$ (This is slightly inaccurate, because
in general we need to modify $\overline{C}$ at 0, but this is a technical
point with which we deal in due course.) The $\Lambda$-periodic sections
of $\mathscr{F}$ may be identified with $G(\mathbb{A}_{\Lambda})/G(\mathbb{O}_{\Lambda})$,
where $\mathbb{A}_{\Lambda}$ is the ring of adèles of the field $K_{\Lambda}$
and $\mathbb{O}_{\Lambda}$ is its maximal compact subring. Let $X_{\Lambda}=\mathbb{C}/\Lambda$
be the associated elliptic curve. The class of $\overline{C}$ in
\[
Bun_{r}(X_{\Lambda})=G(K_{\Lambda})\setminus G(\mathbb{A}_{\Lambda})/G(\mathbb{O}_{\Lambda})
\]
depends only on the gauge-equivalence class of $(A,B),$ namely on
the isomorphism class of $M$. This double coset space is well-known
to classify the isomorphism types of rank-$r$ vector bundles on $X_{\Lambda}.$
We have thus attached to $M$ such a vector bundle $\mathcal{E}_{\Lambda}$,
and in fact, we did so for every $\Lambda$ sufficiently small, in
a way that is compatible with pull-back. It also follows from the
construction that $\mathcal{E}_{\Lambda}$ is invariant under pull-back
by the isogeny $p_{\Lambda}$ or $q_{\Lambda}$ of multiplication
by $p$ or $q$. For all sufficiently small $\Lambda$, $\mathcal{E}_{\Lambda}$
is ``the same'' vector bundle of rank $r$ and degree 0, which we
denote simply by $\mathcal{E}.$

In passing, we remark that it would be interesting to find a direct,
functorial, construction of $\mathcal{E}$. This would give a richer,
``stacky'' meaning to the phrase that ``the classification of elliptic
$(p,q)$-difference modules is fibered over the classification of
vector bundles'' (see the abstract). So far, we work naively with
matrices and double coset spaces.

Elliptic curves are among the few examples over which a complete classification
of vector bundles is known, thanks to work of Atiyah from 1957 \cite{At}.
We review the necessary results in section \ref{sec:Vector-bundles-on},
and also perform some explicit computations in matrices, involving
the Weierstrass zeta function, that will become instrumental later
on. The upshot of Atiyah's classification is that we can attach to
$M$ an important invariant, its \emph{type}, which is a partition
$r=\sum_{i=1}^{k}r_{i}$ of $r=rk(M)$, as explained above.

\subsubsection{The induction step}

Reverting to the language of matrices and canonical forms, we are
now able to analyze the matrix $C$ by an inductive process. Two extreme
cases are easier to explain. When the type is $(1,1,\dots,1),$ $\mathcal{E}$
is trivial and the pair $(A,B)$ turns out to be gauge equivalent
over $K$ (not only over $\widehat{K}$) to a commuting pair of scalar
matrices $(A_{0},B_{0})$. At the other extreme lies type $(r)$,
where $\mathcal{E}$ is indecomposable. In this case $M$ is a twist
of the standard special module of rank $r$, i.e. of the shape $M_{r}^{sp}(a,b)$
discussed above. Proving this involves a delicate bootstrapping argument
with elliptic functions. The general case, where $\mathcal{E}$ is
neither trivial, nor indecomposable, is technically more complicated,
and we refer to the text for details.

\subsubsection{An elliptic analogue of the conjecture of Loxton and van der Poorten}

In the last section we explain how to draw from the main theorem a
conclusion regarding a formal power series which satisfies, simultaneously,
a $p$-difference equation and a $q$-difference equation, whose coefficients
are (the Laurent expansions at 0 of) $\Lambda$-elliptic functions.
Our theorem will say that such a function lies in the ring
\[
R=K_{\Lambda'}[z,z^{-1},\zeta(z,\Lambda')]
\]
generated over the field of $\Lambda'$-elliptic functions by $z^{\pm1}$
and $\zeta(z,\Lambda'),$ for some lattice $\Lambda'\subset\Lambda.$
Conversely, every function from this ring satisfies a $p$-difference
equation and a $q$-difference equation with elliptic functions as
coefficients.

While the reason for the inclusion of $z^{\pm1}$ in the ring $R$
is technical (the need to allow a modification at 0 in the Periodicity
Theorem), the appearance of $\zeta(z,\Lambda')$ is fundamental. It
is attributed to the fact that, unlike the case of $\mathbb{G}_{a}$
or $\mathbb{G}_{m}$, there are non-trivial vector bundles over $X_{\Lambda}$,
namely the $\mathcal{F}_{r}$, that are invariant under pull-back
by $p_{\Lambda}$ and $q_{\Lambda}$.

\bigskip{}

\emph{Acknowledgements: }I would like to thank David Kazhdan and Kiran
Kedlaya for helpful discussions related to this work. I would also
like to thank the referee for making useful suggestions on the exposition.
The author was supported by ISF grant 276/17.

\section{A periodicity theorem}

\subsection{Equivariant sheaves of cosets}

Let $V$ be a finite-dimensional vector space over $\mathbb{R}$.
Our goal in this section is to generalize the periodicity criterion
of \cite{dS1}, Theorem 1, to cover a certain class of sheaves on
$V$ (equipped with its classical topology), which will be used in
the proof of Theorem \ref{thm:Main}. From section \ref{sec:Vector-bundles-on}
on we shall specialize to $V=\mathbb{C},$ and the set-up of Example
\ref{exa:key example} below.

Let $\mathscr{G}$ be a sheaf of groups on $V,$ and $\mathscr{H}$
a sheaf of subgroups of $\mathscr{G}$. Let $\mathscr{F}=\mathscr{G}/\mathscr{H}$
be the sheaf of right cosets of $\mathscr{H}$. This is a sheaf of
pointed sets (the distinguished point being the trivial coset), equipped
with a left action
\[
\mathscr{G}\times\mathscr{F}\to\mathscr{F}.
\]

We assume that these sheaves satisfy the following condition:
\begin{itemize}
\item (Dis) If $U\subset V$ is an open set and $f\in\mathscr{G}(U)$ then
\[
\{x\in U|\,f_{x}\notin\mathscr{H}_{x}\}
\]
is a \emph{discrete}\footnote{By ``discrete'' we mean that its intersection with any compact subset
of $U$ is finite.} subset of $U$.
\end{itemize}
A consequence of this assumption is that sections of $\mathscr{F}$
are \emph{discretely supported}. In other words, if $s\in\mathscr{F}(U)$
then denoting by $0_{x}$ the distinguished element of $\mathscr{F}_{x},$
the set of $x\in U$ where $s_{x}\ne0_{x}$ is discrete. This in particular
holds for global sections.

For $v\in V$ we consider the translation $t_{v}(x)=x+v$, and assume
that there are isomorphisms
\[
\iota_{v}:\mathscr{G}\simeq t_{v}^{*}\mathscr{G}
\]
satisfying $t_{u}^{*}(\iota_{v})\circ\iota_{u}=\iota_{u+v}.$ We assume
that these isomorphisms restrict to isomorphisms on $\mathscr{H}$
and hence on $\mathscr{F}.$ Note that since $(t_{v}^{*}\mathscr{G})_{x}=\mathscr{G}_{t_{v}(x)}=\mathscr{G}_{x+v}$,
on the stalks these are isomorphisms
\[
\iota_{v,x}:\mathscr{G}_{x}\simeq\mathscr{G}_{x+v},\,\,\,\mathscr{H}_{x}\simeq\mathscr{H}_{x+v}
\]
satisfying $\iota_{v,x+u}\circ\iota_{u,x}=\iota_{u+v,x}.$ From now
on we write $\iota_{v}$ for $\iota_{v,x}$. Later on we might even
drop $\iota_{v}$ from the notation and \emph{identify} $\mathscr{G}_{x}$
with $\mathscr{G}_{x+v}$ via translation.

We also consider, for each $0\ne p\in\mathbb{R},$ the multiplication
$m_{p}(x)=px$, and assume that there are isomorphisms
\[
\varphi_{p}:\mathscr{G}\simeq m_{p}^{*}\mathscr{G}
\]
satisfying $m_{p}^{*}(\varphi_{q})\circ\varphi_{p}=\varphi_{pq}$.
We assume that these isomorphisms as well restrict to isomorphisms
on $\mathscr{H}$ and hence on $\mathscr{F}$. On the stalks they
are isomorphisms
\[
\varphi_{p,x}:\mathscr{G}_{x}\simeq\mathscr{G}_{px},\,\,\,\mathscr{H}_{x}\simeq\mathscr{H}_{px}
\]
satisfying the obvious condition with respect to composition. Again
we write $\varphi_{p}$ for $\varphi_{p,x}$.

Finally, we observe that $m_{p}\circ t_{v}=t_{pv}\circ m_{p}$ gives
$t_{v}^{*}m_{p}^{*}\mathscr{G}\simeq m_{p}^{*}t_{pv}^{*}\mathscr{G}$,
and we assume that the relation
\[
m_{p}^{*}(\iota_{pv})\circ\varphi_{p}=t_{v}^{*}(\varphi_{p})\circ\iota_{v}
\]
holds for any $p$ and $v$. On the stalks this means that the diagram
\begin{equation}
\begin{array}{ccc}
\mathscr{G}_{x} & \overset{\varphi_{p}}{\to} & \mathscr{G}_{px}\\
\iota_{v}\downarrow &  & \downarrow\iota_{pv}\\
\mathscr{G}_{x+v} & \overset{\varphi_{p}}{\to} & \mathscr{G}_{px+pv}
\end{array}\label{eq:equivariance}
\end{equation}
commutes.

We call a system $(\mathscr{G},\mathscr{H},\mathscr{F},\iota_{v},\varphi_{p})$
as above an \emph{equivariant sheaf of right cosets.}

If $s\in\Gamma(V,\mathscr{F})$ is a global section we denote by $m_{p}^{*}(s)$
the section $\varphi_{p}\circ s\circ m_{p}^{-1}$, namely
\[
m_{p}^{*}(s)_{x}=\varphi_{p}(s_{x/p}).
\]
Similarly, $t_{v}^{*}(s)$ is the section $\iota_{v}\circ s\circ t_{v}^{-1},$
namely
\[
t_{v}^{*}(s)_{x}=\iota_{v}(s_{x-v}).
\]

\begin{example}
\label{exa:key example}Let $V=\mathbb{C},$ let $G$ be an algebraic
group over $\mathbb{C}$, $\mathscr{G}=G(\mathscr{M})$ where $\mathscr{M}$
is the sheaf of meromorphic functions, and $\mathscr{H}=G(\mathscr{O}$)
where $\mathscr{O}$ is the sheaf of holomorphic functions. The condition
(Dis) is satisfied. We put
\[
\iota_{v}f=f\circ t_{v}^{-1},\,\,\,\varphi_{p}f=f\circ m_{p}^{-1}.
\]
Then $\Gamma(V,\mathscr{G})=G(\mathcal{M})$ where $\mathcal{M}$
is the field of meromorphic functions, and if $s(z)\in G(\mathcal{M})$
\[
m_{p}^{*}s(z)=s(z/p).
\]

When $G=\mathbb{G}_{m}$ the sheaf $\mathscr{F}$ is the sheaf of
\emph{divisors} on $V$. When $G=\mathbb{G}_{a}$, it is the sheaf
of \emph{principal parts}. For the proof of our main theorem we take
$G=GL_{r},$ where the stalks
\[
\mathscr{F}_{x}=G(\mathscr{M}_{x})/G(\mathscr{O}_{x})
\]
are ``\emph{affine Grassmanians''. }For an introduction to affine
Grassmanians and the stack $Bun_{r}(X)$ that will appear in section
\ref{sec:Vector-bundles-on}, we refer the reader to \cite{Zhu}.
However, we shall not be using anything about these concepts besides
their naive definitions as coset spaces.
\end{example}

\subsection{Global periodic sections}

Our interest lies in the set $\Gamma(V,\mathscr{F})$ of global sections
of $\mathscr{F}$ . Recall that the supports of these sections intersect
any bounded subset of $V$ in a finite set. A section $s\in\Gamma(V,\mathscr{F})$
is said to be $\Lambda$\emph{-periodic}, for a lattice $\Lambda\subset V$,
if
\[
t_{\lambda}^{*}(s)=s
\]
for any $\lambda\in\Lambda$. The same terminology applies to global
sections of $\mathscr{G}.$ Our periodicity theorem is the following.
If $s\in\Gamma(V,\mathscr{F})$ then by a \emph{modification of $s$
at $0$ }we mean a section $s'\in\Gamma(V,\mathscr{F})$ whose restriction
to $V\setminus\{0\}$ agrees with the restriction of $s$ to the same
set.
\begin{thm}
\label{thm:Periodicity theorem}Let $s\in\Gamma(V,\mathscr{F}).$
Let $p,q\ge2$ be relatively prime natural numbers. Suppose there
are $A,B\in\Gamma(V,\mathscr{G})$ such that
\[
m_{p}^{*}(s)=As,\,\,\,m_{q}^{*}(s)=Bs.
\]
If $A$ and $B$ are $\Lambda$-periodic, so is a suitable modification
$s'$ of $s$ at 0. Furthermore, this modification also satisfies
\[
m_{p}^{*}(s')=As',\,\,\,m_{q}^{*}(s')=Bs'.
\]
\end{thm}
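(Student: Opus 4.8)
The plan is to exploit the fact that $p$ and $q$ are coprime, so that the semigroup generated by $m_p$ and $m_q$ acts on $V$ in an essentially transitive fashion on the relevant lattices, and to propagate periodicity from a single base lattice out to all of $V$. First I would observe that $s$ is discretely supported, so its ``support divisor'' $D = \{x : s_x \neq 0_x\}$ is a discrete subset of $V$; the relations $m_p^*(s) = As$ and $m_q^*(s) = Bs$ with $A, B \in \Gamma(V,\mathscr{G})$ holomorphic (sections of $\mathscr{H}$ away from their own discrete bad loci) show that $m_p(D) = p\cdot D$ and $m_q(D)$ differ from $D$ only inside the fixed discrete sets where $A$ (resp.\ $B$) fails to lie in $\mathscr{H}$. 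Combined with $\Lambda$-periodicity of $A$ and $B$, this already forces $D \setminus \{0\}$ to be $\Lambda'$-periodic for some sublattice $\Lambda' \subseteq \Lambda$ (one gets control of finitely many orbits and intersects). The point $0$ is special because it is the unique common fixed point of $m_p$ and $m_q$, which is precisely why a modification at $0$ is unavoidable.

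Next I would upgrade this from the support to the section itself. Having arranged (after modifying $s$ at $0$) that the support is $\Lambda'$-periodic, I would define $s' := t_{\lambda}^*(s) - s$... no: rather, for a fixed generator $\lambda$ of $\Lambda'$ consider the section $u$ which ``compares'' $t_\lambda^*(s)$ with $s$ — since both have the same support and the same $m_p^*, m_q^*$ behavior (using that $A, B$ are $\Lambda$-periodic so $t_\lambda^*$ commutes with the twisted pullbacks via diagram \eqref{eq:equivariance}), the difference lies in a sheaf where the Periodicity Theorem \ref{thm:Periodicity theorem}, applied in its already-proved form to this derived datum, or a direct descent argument, shows it is trivial. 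Concretely, I expect to set up an auxiliary global section $s'$ of $\mathscr{F}$ agreeing with $s$ off $0$, check $t_\lambda^*(s') = s'$ by verifying equality stalk-by-stalk on each $\langle m_p, m_q\rangle$-orbit (every orbit in $V \setminus \{0\}$ eventually escapes any bounded set, so one reduces to finitely many ``small'' points and pushes out), and this is where coprimality of $p$ and $q$ does the real work: the multiplicative semigroup they generate is cofinal enough that the functional equations pin down $s'$ everywhere once it is pinned down near $0$ and the support is periodic.

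Finally I would verify that the modified section still satisfies $m_p^*(s') = As'$ and $m_q^*(s') = Bs'$. Off $0$ this is automatic since $s' = s$ there and $0$ is the only point $m_p$ and $m_q$ map to itself; at $0$ itself one checks the stalk identity directly, choosing the modification $s'_0$ to be the unique value consistent with the two functional equations (the coprimality again guarantees such a consistent choice exists and is unique — the two conditions $\varphi_p(s'_0) = A_0 s'_0$ and $\varphi_q(s'_0) = B_0 s'_0$ are compatible because the consistency of $A$ and $B$ forces it). The main obstacle I anticipate is the book-keeping at $0$: showing that the modification needed to make the support periodic is compatible with the modification needed to preserve the functional equations, i.e.\ that one can do both at once. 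This should come down to the consistency relation between $A$ and $B$ (an analogue of \eqref{eq:consistency_equation}) holding at the stalk over $0$, which follows from $m_p^*(s) = As$, $m_q^*(s) = Bs$ and commutativity of $m_p, m_q$; I would isolate this as the key lemma and handle the rest by the orbit-propagation bookkeeping sketched above.
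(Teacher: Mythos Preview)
Your proposal has a genuine gap at the ``upgrade from support to section'' step, and it also misses the structure the paper actually exploits.

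First, the conceptual issue: $\mathscr{F}=\mathscr{G}/\mathscr{H}$ is only a sheaf of pointed sets with a left $\mathscr{G}$-action. There is no subtraction, and in general no way to form a ``comparison section'' $u$ measuring the discrepancy between $t_{\lambda}^{*}(s)$ and $s$. You noticed this yourself (``$t_{\lambda}^{*}(s)-s$\dots\ no''), but the salvage is not spelled out and I do not see how to make it work: two sections of $\mathscr{F}$ with the same support need not be comparable by any element of $\mathscr{G}$. So the strategy ``first show the support is $\Lambda'$-periodic, then upgrade'' does not go through as stated.

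Second, the actual mechanism in the paper is quite different and rests on two ingredients you do not mention. The argument splits according to whether $x\in\mathbb{Q}\Lambda$ or not. On $\mathbb{Q}\Lambda$ one first bounds the support to lie in $N^{-1}\Lambda$, then proves $s_{x}=\iota_{x-y}(s_{y})$ for $x\equiv y\pmod{N\Lambda}$ by a recursion of the form
\[
s_{x}=A_{x}^{-1}\varphi_{p}(A_{x/p})^{-1}\cdots\varphi_{p}^{m-1}(A_{x/p^{m-1}})^{-1}\cdot 0_{x},
\]
together with a combinatorial lemma (from \cite{dS1}) that the equivalence relation on $(\mathbb{Z}^{d})'$ generated by two $S$- and $T$-type relations (built from the prime divisors of $p$ and $q$) is exactly congruence modulo $N$. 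This is where coprimality enters on the rational side. Off $\mathbb{Q}\Lambda$, one first proves that the image of the support modulo $\Lambda$ is finite, then analyzes \emph{$p$-chains} $\{z,pz,\dots,p^{n(z)}z\}\cap\widetilde{S}_{p}$ to obtain a uniform bound $n_{p}$, from which one deduces $s_{z+p^{2n_{p}}\lambda}=\iota_{p^{2n_{p}}\lambda}(s_{z})$ for all $\lambda\in\Lambda$, and symmetrically for $q$; coprimality then gives $p^{2n_{p}}\Lambda+q^{2n_{q}}\Lambda=\Lambda$. None of this chain analysis or the recursion-to-$0_{x}$ device appears in your sketch, and I do not see how to avoid it.

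Finally, your treatment of the modification at $0$ is backwards. One does not choose $s'_{0}$ to satisfy $\varphi_{p}(s'_{0})=A_{0}s'_{0}$ directly; rather, once periodicity of $s$ away from $0$ is established, $s'_{0}$ is \emph{defined} by periodicity (as $\iota_{-\omega}(s_{\omega})$ for any $0\ne\omega\in\Lambda$), and the functional equations at $0$ then follow from periodicity of $s'$ and of $A$, $B$ via a one-line stalk computation. The compatibility you worry about is automatic once periodicity is in hand.
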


Easy examples show that we can not forgo the modification at 0 in
the statement of the theorem. The section $s'$ is clearly unique,
as the difference of any two such modifications is supported at 0,
and being also periodic, must vanish identically. To prove the theorem
we have to show that $s_{x+\lambda}=\iota_{\lambda}(s_{x})$ for every
$x\in V$ and $\lambda\in\Lambda$ such that both $x$ and $x+\lambda$
are not 0. The proof breaks into two cases, depending on whether $x\in\mathbb{Q}\Lambda$
or not.

Before we embark on the proof, let us verify the last claim, which
is easy. Indeed, at any point other than the origin, the germs of
$s'$ and $s$ agree. At $0$ the claim follows from the periodicity
of $s'$ and $A$ or $B$. For example, if $0\ne\omega\in\Lambda$
and we identify stalks via translation (dropping the identification
maps $\iota_{v}$ from the notation)
\[
(m_{p}^{*}s')_{0}=\varphi_{p}(s'_{0})=\varphi_{p}(s'_{\omega})=(m_{p}^{*}s')_{p\omega}=A_{p\omega}s'_{p\omega}=A_{0}s'_{0}=(As')_{0}.
\]

\subsection{Proof of the periodicity on $\mathbb{Q}\Lambda$}

Let $N\ge1$ be an integer such that $A_{x}$ and $B_{x}$ lie in
$\mathscr{H}_{x}$ if $x\in\mathbb{Q}\Lambda\setminus N^{-1}\Lambda$.
The existence of such an $N$ follows from the periodicity of $A$
and $B$ and the assumption (Dis). By induction on $m$ we get
\begin{equation}
s=A^{-1}\cdot m_{p}^{*}(s)=\cdots=A^{-1}m_{p}^{*}(A)^{-1}\cdots(m_{p}^{*})^{m-1}(A)^{-1}\cdot(m_{p}^{*})^{m}(s).\label{eq:recursion}
\end{equation}

If $x\in\mathbb{Q}\Lambda\setminus N^{-1}\Lambda$ then $x/p^{\ell}\notin N^{-1}\Lambda$
for any $\ell\ge0$ so $(m_{p}^{*})^{\ell}(A)_{x}=\varphi_{p}^{\ell}(A_{x/p^{\ell}})\in\mathscr{H}_{x}.$
For $m$ large enough $(m_{p}^{*})^{m}(s)_{x}=\varphi_{p}^{m}(s_{x/p^{m}})=0_{x}$
is the distinguished element of $\mathscr{F}_{x}$, since the support
of $s$ is discrete. It follows that $s_{x}=0_{x}$ as well. In short,
the support of $s|_{\mathbb{Q}\Lambda}$ is contained in $N^{-1}\Lambda$.

Changing notation (calling $N^{-1}\Lambda$ from now on $\Lambda$)
we assume that $A_{x}$ and $B_{x}$ lie in $\mathscr{H}_{x}$ if
$x\in\mathbb{Q}\Lambda\setminus\Lambda$, and are $N\Lambda$-periodic.
We have seen that in such a case $s_{x}=0_{x}$ if $x\in\mathbb{Q}\Lambda\setminus\Lambda$,
and we need to prove that $s|_{\Lambda}$ is $N\Lambda$-periodic,
away from 0.

Let $x,y\in\Lambda\setminus\{0\}$ satisfy $x-y\in N\Lambda.$ We
have to show that $s_{x}=\iota_{x-y}(s_{y}).$ We choose a basis of
$\Lambda$ over $\mathbb{Z}$ in which the coordinates of $x$ and
$y$ are all non-zero, and identify from now on $\Lambda$ with $\mathbb{Z}^{d}$
(where $d=\dim V$). Such a basis, adapted to $x$ and $y$, is easily
seen to exist.

Let $S=\{p_{i}\}$ be the set of prime divisors of $p$. Recall the
equivalence relation $u\sim_{S}v$ on $\mathbb{Z}^{d}$ defined in
\cite{dS1}. This equivalence relation depends on $N$, which we hold
fixed. First, if $d=1$ we say that $u\sim_{S}v$ if $e_{i}=ord_{p_{i}}(u)=ord_{p_{i}}(v)$
for each $i$, and writing $u'_{S}=\prod p_{i}^{-e_{i}}u$ for the
$S$-deprived part of $u$, we have, in addition,
\[
u'_{S}\equiv v'_{S}\mod N.
\]
Note that $0\sim_{S}v$ implies $v=0.$ If $d\ge1$ we say that $u\sim_{S}v$
if for every coordinate $1\le\nu\le d$ we have $u_{\nu}\sim_{S}v_{\nu}.$

Let $T=\{q_{j}\}$ be the set of primes dividing $q.$ Let $(\mathbb{Z}^{d})'$
be the subset of $\mathbb{Z}^{d}$ consisting of vectors all of whose
coordinates are non-zero. In \cite{dS1}, Lemma 2.1, it was proved
that the equivalence relation on $(\mathbb{Z}^{d})'$ generated by
$\sim_{S}$ and $\sim_{T}$ is $\equiv_{\mod N}.$ This uses, of course,
the assumption that $p$ and $q$ are relatively prime. Since none
of the coordinates of $x$ or $y$ vanishes, to prove the periodicity
of $s|_{\Lambda}$ we may assume, in addition, that $x\sim_{S}y$
or $x\sim_{T}y.$

Let us assume therefore, without loss of generality, that $x\sim_{S}y$,
and let $m-1$ be the highest power of $p$ for which $p^{m-1}$ divides
all the coordinates of $x.$ Since $e_{i,\nu}=ord_{p_{i}}(x_{\nu})=ord_{p_{i}}(y_{\nu})$
for every $p_{i}\in S$ and every $1\le\nu\le d$, $p^{m-1}$ is also
the highest power of $p$ dividing all the coordinates of $y.$ Since
$s_{z}=0_{z}$ if $z\in\mathbb{Q}\Lambda\setminus\Lambda$ we get
\[
s_{x}=A_{x}^{-1}\cdot\varphi_{p}(s_{x/p})=\cdots=A_{x}^{-1}\varphi_{p}(A_{x/p})^{-1}\cdots\varphi_{p}^{m-1}(A_{x/p^{m-1}})^{-1}\cdot\varphi_{p}^{m}(s_{x/p^{m}})
\]
\begin{equation}
=A_{x}^{-1}\varphi_{p}(A_{x/p})^{-1}\cdots\varphi_{p}^{m-1}(A_{x/p^{m-1}})^{-1}\cdot0_{x}.\label{eq:stalk recursion}
\end{equation}
The same equation, with the same $m$, holds with $x$ replaced by
$y$. For $0\le\ell\le m-1$, the condition $x\sim_{S}y$ implies
that $p^{-\ell}x\equiv p^{-\ell}y\mod N$, because for every coordinate
$1\le\nu\le d$ 
\[
\prod_{i}p_{i}^{-e_{i,\nu}}x_{\nu}=x'_{\nu,S}\equiv_{\mod N}y'_{\nu,S}=\prod_{i}p_{i}^{-e_{i,\nu}}y_{\nu}
\]
and $p^{\ell}|\prod_{i}p_{i}^{e_{i,\nu}}$, so $p^{-\ell}x_{\nu}\equiv p^{-\ell}y_{\nu}\mod N$
as well. By the periodicity of $A$
\[
A_{x/p^{\ell}}=\iota_{(x-y)/p^{\ell}}A_{y/p^{l}},
\]
hence in view of the commutativity of the diagram (\ref{eq:equivariance})
\[
\varphi_{p}^{\ell}(A_{x/p^{\ell}})=\iota_{x-y}(\varphi_{p}^{\ell}(A_{y/p^{\ell}}))
\]
and $s_{x}=\iota_{x-y}(s_{y}).$ This concludes the proof of the periodicity
on $\mathbb{Q}\Lambda.$

\subsection{Periodicity at points of $V\setminus\mathbb{Q}\Lambda$}

Notation as in the theorem, let $S_{p}$ and $S_{q}\subset V/\Lambda$
be the supports of $A\mod\mathscr{H}$ and $B\mod\mathscr{H}$. If
$A$ or $B$ happen to lie in $\mathscr{H}_{0}$ we add $0\in V/\Lambda$
to $S_{p}$ and $S_{q}$ even though it does not belong to the support
of $A\mod\mathscr{H}$ and $B\mod\mathscr{H}$. By assumption these
are finite sets, and we let $\widetilde{S}_{p}$ and $\widetilde{S}_{q}$
be their pre-images in $V$. Let $\widetilde{S}$ denote the support
of the section $s$. Equation (\ref{eq:recursion}) implies that for
every $m\ge1$ and every $x\in V$
\[
s_{x}=A_{x}^{-1}\varphi_{p}(A_{x/p})^{-1}\cdots\varphi_{p}^{m-1}(A_{x/p^{m-1}})^{-1}\cdot\varphi_{p}^{m}(s_{x/p^{m}})
\]
and similarly
\[
s_{x}=B_{x}^{-1}\varphi_{q}(B_{x/q})^{-1}\cdots\varphi_{q}^{m-1}(B_{x/q^{m-1}})^{-1}\cdot\varphi_{q}^{m}(s_{x/q^{m}}).
\]
Since, if $x\ne0,$ ultimately $s_{x/p^{m}}=0_{x/p^{m}}$ and similarly
$s_{x/q^{m}}=0_{x/q^{m}},$ while if $x=0$ it was included in $\widetilde{S}_{p}$
and $\widetilde{S}_{q}$, 
\[
\widetilde{S}\subset\bigcup_{n=0}^{\infty}p^{n}\widetilde{S}_{p}\cap\bigcup_{m=0}^{\infty}q^{m}\widetilde{S}_{q}.
\]

\begin{lem}
The projection of $\widetilde{S}$ modulo $\Lambda$ is finite.
\end{lem}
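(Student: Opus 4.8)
The plan is to exploit the containment
\[
\widetilde{S}\subset\bigcup_{n=0}^{\infty}p^{n}\widetilde{S}_{p}\cap\bigcup_{m=0}^{\infty}q^{m}\widetilde{S}_{q}
\]
established just above, combined with the $\Lambda$-periodicity of $\widetilde S_p$ and $\widetilde S_q$ and the relative primality of $p$ and $q$. Working modulo $\Lambda$ in the compact group $V/\Lambda$, write $S_p,S_q\subset V/\Lambda$ for the (finite) images, and let $[p],[q]:V/\Lambda\to V/\Lambda$ denote multiplication by $p$ and $q$. Then the image $\bar S$ of $\widetilde S$ satisfies
\[
\bar S\subset\Bigl(\bigcup_{n\ge0}[p]^{n}S_p\Bigr)\cap\Bigl(\bigcup_{m\ge0}[q]^{m}S_q\Bigr).
\]
So it suffices to show that this intersection is finite. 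Since $S_p$ and $S_q$ are finite, it is enough to fix $\bar a\in S_p$ and $\bar b\in S_q$ and prove that the set of $\bar\xi\in V/\Lambda$ with $[p]^{n}\bar a=\bar\xi=[q]^{m}\bar b$ for some $n,m\ge0$ is finite; a finite union of finite sets is finite.

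The first step is to reduce to the torsion part. If $\bar\xi$ lies in the intersection then $\bar\xi\in[p]^{n}S_p$ for infinitely many $n$ (namely for each such $n$ there is $\bar a_n\in S_p$ with $[p]^{n}\bar a_n=\bar\xi$, and since $S_p$ is finite some $\bar a\in S_p$ occurs for infinitely many, indeed arbitrarily large, $n$); consequently $\bar\xi\in\bigcap_{N}[p]^{N}(V/\Lambda)$. But $[p]^{N}(V/\Lambda)=V/\Lambda$ since $[p]$ is surjective on the torus, so that is vacuous — instead I should track preimages. The correct reduction: if $[p]^{n}\bar a=[p]^{n'}\bar a$ with $n<n'$ for the \emph{same} $\bar a$, then $[p]^{n}\bar a$ is fixed by $[p]^{\,n'-n}$, hence is a $(p^{\,n'-n}-1)$-torsion point, hence $\bar\xi$ is torsion. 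Since $S_p$ is finite and there are infinitely many valid $n$, either such a coincidence occurs (and $\bar\xi$ is torsion), or the values $[p]^{n}\bar a$ are all distinct as $\bar a$ ranges — but they all equal $\bar\xi$, a contradiction unless the orbit is a single point from some stage on. In all cases one concludes $\bar\xi\in(V/\Lambda)_{\mathrm{tors}}$; symmetrically it is detected by the $q$-side. Write $\bar\xi$ in lowest terms as a point of order $k$ with $p\nmid$ ``denominator in a controlled way'' — more precisely, the order $k$ must divide some $p^{\,n'-n}-1$ and some $q^{\,m'-m}-1$.

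The heart of the argument is then a number-theoretic finiteness: a torsion point $\bar\xi$ of order $k$ lying in $\bigcup_n[p]^nS_p$ forces, after clearing the fixed finite set $S_p$ (whose elements have bounded order $d_p$, say, on their torsion components and bounded ``non-torsion denominators''), that $k\,|\,d_p\cdot(\text{stuff coprime to }p)$, while membership in $\bigcup_m[q]^mS_q$ forces $k$ to be built from primes dividing $q$ and a bounded factor. Since $\gcd(p,q)=1$, the prime factors of $k$ coming from the $p$-side (beyond the bounded part from $S_p$) and from the $q$-side are disjoint, pinning $k$ to a bounded set of values; and for each fixed order $k$ there are only finitely many points of that order in $V/\Lambda$. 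This is exactly the kind of arithmetic separation used in \cite{dS1} for the relation $\sim_S$, $\sim_T$, so I would invoke or adapt Lemma 2.1 of \cite{dS1} rather than redo it. \textbf{The main obstacle} is the bookkeeping in this last step: $\widetilde S_p$ is not a union of lattice cosets of $\mathbb{Q}\Lambda$-rational points but an arbitrary finite-modulo-$\Lambda$ set, so ``order of $\bar a$'' and ``$p$-part of the denominator'' must be interpreted coordinatewise after choosing a basis of $\Lambda$, and one must check the relative-primality argument still separates the two towers $\{p^n\widetilde S_p\}$ and $\{q^m\widetilde S_q\}$ down to a finite overlap. Once that separation is in hand the finiteness of the projection of $\widetilde S$ is immediate.
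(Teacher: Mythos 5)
Your opening reduction is fine: passing to $V/\Lambda$, the image of $\widetilde S$ is contained in $\bigl(\bigcup_{n}[p]^{n}S_{p}\bigr)\cap\bigl(\bigcup_{m}[q]^{m}S_{q}\bigr)$, and it suffices to fix $\bar a\in S_{p}$, $\bar b\in S_{q}$ and show that $E_{\bar a,\bar b}=\{\bar\xi:\ \exists\, n,m\ge0,\ [p]^{n}\bar a=\bar\xi=[q]^{m}\bar b\}$ is finite. But the core of your argument has two genuine gaps. First, the claim that ``in all cases one concludes $\bar\xi\in(V/\Lambda)_{\mathrm{tors}}$'' is false: a single coincidence $[p]^{n}\bar a=[q]^{m}\bar b$ already puts a point in the intersection, and such a point need not be torsion (take $\bar a$ non-torsion and $\bar b$ with $q\bar b=p\bar a$). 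Your derivation of it rests on the unjustified premise that each $\bar\xi$ in the intersection lies in $[p]^{n}S_{p}$ for infinitely many $n$ --- membership in the union only provides one $n$, as you yourself half-notice before the argument becomes circular. Second, even granting torsion, the attempted bound on the order $k$ does not work: knowing that $k$ divides some $p^{n'-n}-1$ and some $q^{m'-m}-1$ pins down nothing, since \emph{every} $k$ coprime to $pq$ divides $p^{i}-1$ for $i=\mathrm{ord}_{k}(p)$ and $q^{j}-1$ for $j=\mathrm{ord}_{k}(q)$; moreover the assertion that such $k$ is ``built from primes dividing $q$'' is backwards, as divisors of $q^{j}-1$ are coprime to $q$. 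So the arithmetic separation you hope to import from the $\sim_{S},\sim_{T}$ analysis is simply not available here.

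The missing idea is that no uniform bound on torsion orders is needed. Suppose $E_{\bar a,\bar b}$ is infinite. Choosing one pair $(n,m)$ for each of infinitely many distinct $\bar\xi$, the exponents $n$ are unbounded (else $[p]^{n}\bar a$ takes finitely many values), and after passing to a subsequence so are the $m$; hence there are pairs $(n_{1},m_{1})$, $(n_{2},m_{2})$ with $n_{1}<n_{2}$ and $m_{1}<m_{2}$. Multiplying $p^{n_{1}}\bar a\equiv q^{m_{1}}\bar b$ by $p^{n_{2}-n_{1}}$ and comparing with $p^{n_{2}}\bar a\equiv q^{m_{2}}\bar b$ gives $\bigl(q^{m_{2}}-p^{n_{2}-n_{1}}q^{m_{1}}\bigr)\bar b\equiv0\ (\mathrm{mod}\ \Lambda)$; the integer coefficient is nonzero because $p$ and $q$ are multiplicatively independent, so $\bar b$ is torsion of that specific order, hence $\{[q]^{m}\bar b\}_{m\ge0}$ is finite and $E_{\bar a,\bar b}\subset\{[q]^{m}\bar b\}$ is finite --- a contradiction. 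Note that this uses only multiplicative independence, not $\gcd(p,q)=1$, in agreement with the remark accompanying the paper's citation of \cite{dS1}, Lemma 2.3.
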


\begin{proof}
See \cite{dS1}, Lemma 2.3. It is enough to assume, for this Lemma,
that $p$ and $q$ are multiplicatively independent.
\end{proof}
We write
\[
M=\mathbb{Q}\Lambda.
\]
 Let $S$ be the projection of $\widetilde{S}$ modulo $\Lambda$.
Pick $z\in\widetilde{S}_{p},$ $z\notin M$. We call 
\[
\{z,pz,p^{2}z,\dots\}\cap\widetilde{S}_{p}
\]
the $p$-\emph{chain through} $z$. Since $z\notin M$ all the $p^{n}z$
have distinct images modulo $\Lambda$, so only finitely many of them
belong to $\widetilde{S}_{p}$. Let $p^{n(z)}z$ be the last one,
and call $n(z)\ge0$ the \emph{exponent} of the $p$-chain through
$z$. Call a $p$-chain \emph{primitive} if it is not properly contained
in any other $p$-chain, i.e. if none of the points $p^{n}z$ for
$n<0$ belongs to $\widetilde{S}_{p}.$ Since $\widetilde{S}_{p}$
is $\Lambda$-periodic, $n(z+\lambda)=n(z)$ for $\lambda\in\Lambda$.
It follows from the finiteness of $S_{p}$ that 
\[
n_{p}=1+\max_{z\in\widetilde{S}_{p},\,z\notin M}n(z)<\infty.
\]

\begin{lem}
Let $\{z,pz,p^{2}z,\dots,p^{n(z)}z\}\cap\widetilde{S}_{p}$ be a primitive
$p$-chain through $z\notin M$. Then
\end{lem}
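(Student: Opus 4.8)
The statement is truncated in the excerpt, so I read the intended conclusion as the vanishing at the top of the chain, namely $s_{p^{n(z)}z}=0_{p^{n(z)}z}$ — equivalently $s_{p^{j}z}=0_{p^{j}z}$ for every $j\ge n(z)$ — which, together with the (immediate) vanishing for $j<0$, says that along the $p$-orbit of $z$ the section $s$ is supported in $\{z,pz,\dots,p^{n(z)-1}z\}$. The plan is to feed the $p$-recursion $(\ref{eq:recursion})$ for $s$ into the finiteness statement of the preceding Lemma (finiteness of the projection of $\widetilde{S}$ modulo $\Lambda$), the point being that beyond the last element of the chain the section can only be transported by $\mathscr{H}$, and $\mathscr{H}$ cannot destroy a non-trivial germ.

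First I would deal with the points \emph{below} the chain. Since the chain through $z$ is primitive, $p^{-\ell}z\notin\widetilde{S}_{p}$ for all $\ell\ge1$; and if $p^{-\ell}z$ belonged to some $p^{n}\widetilde{S}_{p}$ with $n\ge0$ we would have $p^{-\ell-n}z\in\widetilde{S}_{p}$ with negative exponent, contradicting primitivity. Hence $p^{-\ell}z\notin\bigcup_{n\ge0}p^{n}\widetilde{S}_{p}\supseteq\widetilde{S}$, so $s_{p^{-\ell}z}=0_{p^{-\ell}z}$; moreover $z\notin M=\mathbb{Q}\Lambda$ forces $p^{-\ell}z\ne0$, so $A_{p^{-\ell}z}\in\mathscr{H}_{p^{-\ell}z}$. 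Iterating $s_{x}=A_{x}^{-1}\varphi_{p}(s_{x/p})$ downward and using that elements of $\mathscr{H}$ fix the distinguished point, I get $s_{p^{j}z}=0_{p^{j}z}$ for $j<0$, while for $j\ge0$ the same recursion run down only to $z/p$ (where $s$ already vanishes) expresses $s_{p^{j}z}$ as the product $A_{p^{j}z}^{-1}\varphi_{p}(A_{p^{j-1}z})^{-1}\cdots\varphi_{p}^{j}(A_{z})^{-1}$ applied to the distinguished point $0_{p^{j}z}$.

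The heart of the argument is the behaviour \emph{above} the chain. For $j>n(z)$ the point $p^{j}z$ lies strictly above the last element of the chain, so $p^{j}z\notin\widetilde{S}_{p}$ and $A_{p^{j}z}^{-1}\in\mathscr{H}_{p^{j}z}$. Here I would invoke that $\mathscr{H}$ is exactly the stabilizer of the distinguished point of $\mathscr{F}=\mathscr{G}/\mathscr{H}$: left multiplication by a germ of $\mathscr{H}_{x}$ is a bijection of $\mathscr{F}_{x}$ fixing $0_{x}$, hence sends a non-distinguished germ to a non-distinguished germ. Combined with $\varphi_{p}$ being a stalk isomorphism, the relation $s_{p^{j}z}=A_{p^{j}z}^{-1}\varphi_{p}(s_{p^{j-1}z})$ yields, for every $j>n(z)$, that $s_{p^{j}z}\ne0_{p^{j}z}$ if and only if $s_{p^{j-1}z}\ne0_{p^{j-1}z}$. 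Therefore, if one had $s_{p^{n(z)}z}\ne0_{p^{n(z)}z}$, then $s_{p^{j}z}\ne0_{p^{j}z}$ for all $j\ge n(z)$, so $\{p^{j}z:j\ge n(z)\}\subseteq\widetilde{S}$; but $z\notin\mathbb{Q}\Lambda$ implies $(p^{a}-p^{b})z\notin\Lambda$ for $a\ne b$, so these points have pairwise distinct images in $V/\Lambda$, contradicting the preceding Lemma. Hence $s_{p^{n(z)}z}=0_{p^{n(z)}z}$, and the equivalence above propagates the vanishing to all $j\ge n(z)$.

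I do not anticipate a genuine obstacle: the proof is a short combination of $(\ref{eq:recursion})$ with the finiteness of $\widetilde{S}\bmod\Lambda$ (which I may assume, and which is the sole place where the multiplicative independence of $p$ and $q$ enters, through the $q$-chains). The one point that deserves care — and that I would state explicitly — is the observation in the third paragraph: once one is past the support of $A\bmod\mathscr{H}$, the section $s$ is moved along the $p$-orbit purely by $\mathscr{H}$, which, being the stabilizer of the base point, can neither create nor annihilate a non-trivial germ of $\mathscr{F}$; this is precisely what forbids $s$ from being non-zero along an entire half-line of points with distinct classes modulo $\Lambda$.
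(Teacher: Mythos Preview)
Your reading of the truncated conclusion is correct up to an equivalent reformulation: the paper actually states that the product
\[
A_{p^{n(z)}z}^{-1}\varphi_{p}(A_{p^{n(z)-1}z})^{-1}\cdots\varphi_{p}^{n(z)}(A_{z})^{-1}
\]
lies in $\mathscr{H}_{p^{n(z)}z}$, but since $s_{z/p}=0_{z/p}$ this product applied to $0_{p^{n(z)}z}$ equals $s_{p^{n(z)}z}$, and $\mathscr{H}$ is exactly the stabilizer of the distinguished point, so your formulation $s_{p^{n(z)}z}=0_{p^{n(z)}z}$ is equivalent. Your argument is essentially the paper's: first establish $s_{z/p}=0$ from primitivity, then use that for $j>n(z)$ the factor $A_{p^{j}z}$ lies in $\mathscr{H}$, so a non-trivial germ at $p^{n(z)}z$ would propagate to all higher $p^{j}z$, producing infinitely many distinct classes in $\widetilde{S}\bmod\Lambda$ and contradicting the preceding finiteness lemma.
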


\[
A_{p^{n(z)}z}^{-1}\varphi_{p}(A_{p^{n(z)-1}z})^{-1}\cdots\varphi_{p}^{n(z)}(A_{z})^{-1}\in\mathscr{H}_{p^{n(z)}z}.
\]

\begin{proof}
First, $s_{z/p}=0_{z/p}$ since
\[
s_{z/p}=A_{z/p}^{-1}\varphi_{p}(A_{z/p^{2}})^{-1}\cdots\varphi_{p}^{m-2}(A_{z/p^{m-1}})^{-1}\cdot\varphi_{p}^{m-1}s_{z/p^{m}},
\]
all the $z/p^{\ell}$ ($\ell\ge1$) are outside $\widetilde{S}_{p}$,
so $A_{z/p^{\ell}}\in\mathscr{H}_{z/p^{\ell}}$, while for $m$ large
enough $s_{z/p^{m}}=0_{z/p^{m}}.$

For every $n\ge n(z)$
\[
s_{p^{n}z}=A_{p^{n}z}^{-1}\varphi_{p}(A_{p^{n-1}z})^{-1}\cdots\varphi_{p}^{n}(A_{z})^{-1}\cdot\varphi_{p}^{n+1}(s_{z/p})
\]
\[
=A_{p^{n}z}^{-1}\varphi_{p}(A_{p^{n-1}z})^{-1}\cdots\varphi_{p}^{n}(A_{z})^{-1}\cdot0_{p^{n}z}.
\]
Since $A_{p^{\ell}z}\in\mathscr{H}_{p^{\ell}z}$ for $\ell>n(z)$,
were the lemma not valid,
\[
A_{p^{n}z}^{-1}\varphi_{p}(A_{p^{n-1}z})^{-1}\cdots\varphi_{p}^{n}(A_{z})^{-1}\notin\mathscr{H}_{p^{n}z}
\]
and so $s_{p^{n}z}\ne0_{p^{n}z}$. But this would mean that for all
$n\ge n(z),$ $p^{n}z\in\widetilde{S}$. As $z\notin M$, these points
have distinct images in $S,$ contradicting the previous lemma.
\end{proof}
We conclude that $s_{p^{n}z}=0_{p^{n}z}$ if $n<0$ or $n\ge n(z)$.
\begin{cor}
For any point $z\notin M=\mathbb{Q}\Lambda,$ $n,m\in\mathbb{Z},$
if both $p^{n}z$ and $p^{m}z$ belong to $\widetilde{S}$, then $|n-m|<n_{p}.$
\end{cor}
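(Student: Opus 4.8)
The plan is to reduce the assertion to the vanishing statement just obtained, by locating, inside the $p$-orbit of $z$, the primitive $p$-chain to which the preceding lemma applies.

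Fix $z\notin M$ and set $\mathcal{O}=\{p^{k}z:k\in\mathbb{Z}\}$. Since $z\notin\mathbb{Q}\Lambda$, distinct powers $p^{k}z$ have distinct images in $V/\Lambda$; as $S_{p}$ is finite, only finitely many $k$ satisfy $p^{k}z\in\widetilde{S}_{p}$. If there are none, then by the inclusion $\widetilde{S}\subset\bigcup_{n\ge0}p^{n}\widetilde{S}_{p}$ no point of $\mathcal{O}$ lies in $\widetilde{S}$, and the assertion is vacuous. Otherwise let $b=\min\{k:p^{k}z\in\widetilde{S}_{p}\}$ and $w=p^{b}z$. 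Minimality of $b$ gives $p^{-j}w\notin\widetilde{S}_{p}$ for every $j\ge1$, so $\{w,pw,p^{2}w,\dots\}\cap\widetilde{S}_{p}$ is a primitive $p$-chain through $w\notin M$, and its exponent $n(w)$ equals $\bigl(\max\{k:p^{k}z\in\widetilde{S}_{p}\}\bigr)-b$. Because $w\in\widetilde{S}_{p}$ and $w\notin M$, the definition of $n_{p}$ yields $n(w)\le n_{p}-1$.

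Now apply the vanishing statement preceding the corollary to this primitive chain: $s_{p^{j}w}=0_{p^{j}w}$ whenever $j<0$ or $j\ge n(w)$. Translating through $p^{j}w=p^{b+j}z$, a point $p^{k}z$ can lie in $\widetilde{S}$ only for $b\le k\le b+n(w)-1$. Hence, if both $p^{n}z$ and $p^{m}z$ belong to $\widetilde{S}$, the integers $n,m$ lie in a single interval of length $n(w)-1\le n_{p}-2$, so $|n-m|<n_{p}$.

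No genuine obstacle arises here, the substance having been carried by the preceding lemma; what remains is bookkeeping. The points deserving attention are (i) that $\mathcal{O}$ meets $\widetilde{S}_{p}$ in only finitely many (hence, a bounded set of) indices --- this is precisely where $z\notin\mathbb{Q}\Lambda$ is used, via the distinctness of the reductions of the $p^{k}z$ modulo $\Lambda$; and (ii) that the chain extracted from $\mathcal{O}$ is genuinely primitive and passes through a point of $\widetilde{S}_{p}$ lying outside $M$, so that the uniform bound $n_{p}$ is applicable. (The artificial adjunction of $0$ to $S_{p}$ is irrelevant, since $0\in M$ never lies on the orbit of a point $z\notin M$.)
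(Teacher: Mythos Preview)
Your argument is correct and is exactly the intended deduction: the paper states the corollary without proof because it is meant to follow immediately from the sentence ``We conclude that $s_{p^{n}z}=0_{p^{n}z}$ if $n<0$ or $n\ge n(z)$'' applied to the unique primitive $p$-chain contained in the $p$-orbit of $z$. Your write-up simply makes the bookkeeping explicit, including the vacuous case where the orbit misses $\widetilde{S}_{p}$ altogether.
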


\begin{proof}
(of Theorem \ref{thm:Periodicity theorem}, concluded). Let $\lambda\in\Lambda$.
Assume that $z\notin M$ and $s_{z}\ne0_{z}.$ Then by the corollary
$s_{z/p^{n_{p}}}=0_{z/p^{n_{p}}}$, so
\[
s_{z}=A_{z}^{-1}\varphi_{p}(A_{z/p})^{-1}\cdots\varphi_{p}^{n_{p}-1}(A_{z/p^{n_{p}-1}})^{-1}\cdot0_{z}.
\]
By the periodicity of $A$ under translation by $\Lambda$ we now
have
\[
\iota_{p^{2n_{p}}\lambda}s_{z}=A_{(z+p^{2n_{p}}\lambda)}^{-1}\varphi_{p}(A_{(z+p^{2n_{p}}\lambda)/p})^{-1}\cdots\varphi_{p}^{n_{p}-1}(A_{(z+p^{2n_{p}}\lambda)/p^{n_{p}-1}})^{-1}\cdot0_{(z+p^{2n_{p}}\lambda)}.
\]
Since $z\in\widetilde{S}$, for every $n_{p}\le n$ we must have $z/p^{n}\notin\widetilde{S}$
(by the corollary). This implies that $z/p^{n}\notin\widetilde{S}_{p}$
(by (\ref{eq:stalk recursion}) and decreasing induction on $n$),
hence $A_{z/p^{n}}\in\mathscr{H}_{z/p^{n}}$. If $n_{p}\le n<2n_{p}$
then by the periodicity of $A$ also $A_{(z+p^{2n_{p}}\lambda)/p^{n}}\in\mathscr{H}_{(z+p^{2n_{p}}\lambda)/p^{n}}.$
We therefore get
\[
\iota_{p^{2n_{p}}\lambda}s_{z}=
\]
\[
A_{(z+p^{2n_{p}}\lambda)}^{-1}\varphi_{p}(A_{(z+p^{2n_{p}}\lambda)/p})^{-1}\cdots\varphi_{p}^{2n_{p}-1}(A_{(z+p^{2n_{p}}\lambda)/p^{2n_{p}-1}})^{-1}\cdot0_{(z+p^{2n_{p}}\lambda)}.
\]
Now at least one of $A_{(z+p^{2n_{p}}\lambda)/p^{i}}$ for $0\le i<n_{p}$
is not in $\mathscr{H}$, or else all the $A_{z/p^{i}}$ for $i$
in the same range will be in $\mathscr{H}$ and $s_{z}$ would be
$0_{z}.$ By the definition of $n_{p}$ this implies that $A_{(z+p^{2n_{p}}\lambda)/p^{i}}$
is in $\mathscr{H}$ for $i\ge2n_{p}.$ We thus get that for every
$n\ge2n_{p}$
\[
\iota_{p^{2n_{p}}\lambda}s_{z}=A_{(z+p^{2n_{p}}\lambda)}^{-1}\varphi_{p}(A_{(z+p^{2n_{p}}\lambda)/p})^{-1}\cdots\varphi_{p}^{n-1}(A_{(z+p^{2n_{p}}\lambda)/p^{n-1}})^{-1}\cdot0_{(z+p^{2n_{p}}\lambda)}.
\]
But for $n$ large enough this is also
\[
A_{(z+p^{2n_{p}}\lambda)}^{-1}\varphi_{p}(A_{(z+p^{2n_{p}}\lambda)/p})^{-1}\cdots\varphi_{p}^{n-1}(A_{(z+p^{2n_{p}}\lambda)/p^{n-1}})^{-1}\cdot\varphi_{p}^{n}(s_{(z+p^{2n_{p}}\lambda)/p^{n}})
\]
\[
=s_{z+p^{2n_{p}}\lambda}.
\]
The relation $\iota_{p^{2n_{p}}\lambda}s_{z}=s_{z+p^{2n_{p}}\lambda}$
is therefore proven under the assumption $s_{z}\ne0_{z}.$ But it
stays valid also if $s_{z}=0_{z}$, because if $s_{z+p^{2n_{p}}\lambda}\ne0_{z+p^{2n_{p}}\lambda}$,
replace $z$ by $z+p^{2n_{p}}\lambda$ and $\lambda$ by $-\lambda$
and use the previous argument.

We have therefore shown that if $z\notin\mathbb{Q}\Lambda$
\[
s_{z+p^{2n_{p}}\lambda}=\iota_{p^{2n_{p}}\lambda}s_{z}.
\]
Similarly,
\[
s_{z+q^{2n_{q}}\lambda}=\iota_{q^{2n_{q}}\lambda}s_{z}.
\]
If $p$ and $q$ are relatively prime the lattice generated by $p^{2n_{p}}\Lambda$
and $q^{2n_{q}}\Lambda$ is $\Lambda.$ We have therefore concluded
the proof of the following proposition, and with it of Theorem \ref{thm:Periodicity theorem}.
\end{proof}
\begin{prop}
Let $s\in\Gamma(V,\mathscr{F})$ and assume that $p$ and $q$ are
multiplicatively independent. Assume that the conditions of Theorem
\ref{thm:Periodicity theorem} are satisfied. Then there exists a
lattice $\Lambda'\subset\Lambda$ (depending on $s$) such that for
every $z\notin M=\mathbb{Q}\Lambda$ and $\lambda\in\Lambda'$
\[
s_{z+\lambda}=\iota_{\lambda}(s_{z}).
\]
If furthermore $gcd(p,q)=1,$ we may take $\Lambda'=\Lambda.$
\end{prop}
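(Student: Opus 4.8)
The plan is to prove the displayed relation separately along $p$-scaling lines and $q$-scaling lines, and then glue. Fix a point $z\notin M=\mathbb{Q}\Lambda$. The first step is a reduction: it suffices to prove $s_{z+p^{2n_{p}}\lambda}=\iota_{p^{2n_{p}}\lambda}(s_{z})$ for all $\lambda\in\Lambda$, together with the analogous identity obtained by replacing $p,n_{p}$ by $q,n_{q}$. Granting both families of identities, the cocycle law $\iota_{\mu}\circ\iota_{\nu}=\iota_{\mu+\nu}$ together with the fact that $z+\mu\notin M$ whenever $\mu\in\Lambda$ propagate $s_{z+\mu}=\iota_{\mu}(s_{z})$ to every $\mu$ in the subgroup of $\Lambda$ generated by $p^{2n_{p}}\Lambda$ and $q^{2n_{q}}\Lambda$. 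That subgroup equals $\gcd(p^{2n_{p}},q^{2n_{q}})\Lambda$, which is a finite-index sublattice $\Lambda'$ in general, and is all of $\Lambda$ precisely when $\gcd(p,q)=1$ (since then $\gcd(p^{2n_{p}},q^{2n_{q}})=1$); so one takes $\Lambda'=\gcd(p^{2n_{p}},q^{2n_{q}})\Lambda$.

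For the $p$-identity I would exploit the stalk form of the recursion $(\ref{eq:recursion})$, namely $s_{z}=A_{z}^{-1}\varphi_{p}(A_{z/p})^{-1}\cdots\varphi_{p}^{m-1}(A_{z/p^{m-1}})^{-1}\cdot\varphi_{p}^{m}(s_{z/p^{m}})$ valid for every $m\ge1$ (compare $(\ref{eq:stalk recursion})$). Since $z\notin M$ the points $z/p^{k}$ have pairwise distinct classes modulo $\Lambda$, so by the Lemma on the finiteness of the image of $\widetilde{S}$ modulo $\Lambda$ only finitely many of them lie in $\widetilde{S}$; hence $s_{z/p^{m}}=0_{z/p^{m}}$ for $m\gg0$ and the recursion truncates, expressing $s_{z}$ as a finite product $A_{z}^{-1}\varphi_{p}(A_{z/p})^{-1}\cdots\varphi_{p}^{m-1}(A_{z/p^{m-1}})^{-1}\cdot0_{z}$. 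The bound $n_{p}<\infty$, via the Corollary and the Lemma on primitive $p$-chains, confines the genuinely nontrivial contributions to this product to a window of bounded length: when $s_{z}\ne0_{z}$ the factors $\varphi_{p}^{k}(A_{z/p^{k}})$ with $k\ge n_{p}$ all lie in $\mathscr{H}$, by a decreasing induction using $(\ref{eq:stalk recursion})$. I would then carry out the same truncated recursion at the translated point $z+p^{2n_{p}}\lambda$: for indices $k$ in the active window, $(z+p^{2n_{p}}\lambda)/p^{k}=z/p^{k}+p^{2n_{p}-k}\lambda$ with $p^{2n_{p}-k}\lambda\in\Lambda$, so the $\Lambda$-periodicity of $A$ and the commutativity of the equivariance square $(\ref{eq:equivariance})$ give $\varphi_{p}^{k}(A_{(z+p^{2n_{p}}\lambda)/p^{k}})=\iota_{p^{2n_{p}}\lambda}(\varphi_{p}^{k}(A_{z/p^{k}}))$, while outside the window both sides contribute trivial factors. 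Together with $\iota_{p^{2n_{p}}\lambda}(0_{z})=0_{z+p^{2n_{p}}\lambda}$ this matches the two products and yields $s_{z+p^{2n_{p}}\lambda}=\iota_{p^{2n_{p}}\lambda}(s_{z})$ when $s_{z}\ne0_{z}$; if $s_{z}=0_{z}$ but $s_{z+p^{2n_{p}}\lambda}\ne0$ one runs the same argument with $z$ and $z+p^{2n_{p}}\lambda$ interchanged (and $\lambda\mapsto-\lambda$), and if both sides vanish there is nothing to prove. The $q$-identity is the verbatim statement with $q,n_{q}$ in place of $p,n_{p}$.

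The genuinely substantial input, and the step I expect to be the main obstacle, is the finiteness of the bounds $n_{p}$ and $n_{q}$ — equivalently, the finiteness of the image of $\widetilde{S}$ modulo $\Lambda$ (\cite{dS1}, Lemma 2.3). This is what makes it legitimate to truncate the recursion uniformly along a whole family of parallel $p$-lines and to confine the ``active'' part to a bounded window; it is proved from the containment $\widetilde{S}\subset\bigcup_{n}p^{n}\widetilde{S}_{p}\cap\bigcup_{m}q^{m}\widetilde{S}_{q}$ and the multiplicative independence of $p$ and $q$. Granting this finiteness, and the primitive $p$-chain Lemma that locates precisely where the product of $A$-factors can fail to lie in $\mathscr{H}$, the rest of the argument is careful but essentially routine bookkeeping with the two recursions and the equivariance square, in the pattern of \cite{dS1}; I anticipate no new difficulty on the $q$-side.
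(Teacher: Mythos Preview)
Your approach is exactly the paper's, and the reduction to the two families of identities together with the gluing via $\Lambda'=\gcd(p^{2n_{p}},q^{2n_{q}})\Lambda$ is correct. There is, however, one step you have glossed over that is not routine.

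You write both $s_{z}$ and $s_{z+p^{2n_{p}}\lambda}$ as truncated products and assert that ``outside the window both sides contribute trivial factors.'' For the \emph{original} point this is fine: when $s_{z}\ne0_{z}$ the Corollary gives $z/p^{k}\notin\widetilde{S}$ for $k\ge n_{p}$, and your decreasing induction yields $A_{z/p^{k}}\in\mathscr{H}$ for all such $k$. For the \emph{translated} point, periodicity only transports this information across the range $0\le k\le 2n_{p}$, since $p^{2n_{p}-k}\lambda\in\Lambda$ requires $k\le 2n_{p}$. For $k>2n_{p}$ the translate $(z+p^{2n_{p}}\lambda)/p^{k}$ and $z/p^{k}$ differ by $p^{2n_{p}-k}\lambda\notin\Lambda$, so periodicity of $A$ says nothing, and there is no a~priori reason why $A_{(z+p^{2n_{p}}\lambda)/p^{k}}$ should lie in $\mathscr{H}$. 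Equivalently, you cannot simply truncate the translated recursion at $m=2n_{p}$, because $s_{(z+p^{2n_{p}}\lambda)/p^{2n_{p}}}=s_{z/p^{2n_{p}}+\lambda}$ need not vanish.

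The missing argument is this: since $s_{z}\ne0_{z}$, periodicity forces at least one of $A_{(z+p^{2n_{p}}\lambda)/p^{i}}$ with $0\le i<n_{p}$ to lie outside $\mathscr{H}$; that is, $(z+p^{2n_{p}}\lambda)/p^{i_{0}}\in\widetilde{S}_{p}$ for some $i_{0}<n_{p}$. Now apply the \emph{definition} of $n_{p}$ at the translated point: if $(z+p^{2n_{p}}\lambda)/p^{k}\in\widetilde{S}_{p}$ for some $k\ge 2n_{p}$, the $p$-chain through that point would reach index $i_{0}$, giving exponent $\ge k-i_{0}>n_{p}-1$, a contradiction. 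Hence $A_{(z+p^{2n_{p}}\lambda)/p^{k}}\in\mathscr{H}$ for all $k\ge 2n_{p}$, and only then can you extend the product to arbitrary length and identify it with $s_{z+p^{2n_{p}}\lambda}$. This is precisely the step the paper isolates; once you insert it, your argument is complete and coincides with the paper's.
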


\section{Vector bundles on elliptic curves\label{sec:Vector-bundles-on}}

\subsection{Atiyah's classification}
\begin{thm}
(\cite{At}, Theorem 5, p.432) \label{thm:Atiyah} (i) Let $X$ be
an elliptic curve. Every vector bundle on $X$ is a direct sum of
indecomposable vector bundles, and the indecomposable components (with
their multiplicities) are uniquely determined up to isomorphism.

(ii) Let $\mathscr{E}(r,d)$ be the set of isomorphism classes of
indecomposable vector bundles of rank $r$ and degree $d$. Let $p_{X}\in End(X)$
be multiplication by $p.$ Then $p_{X}^{*}(\mathscr{E}(r,d))\subset\mathscr{E}(r,p^{2}d).$

(iii) There exists a unique isomorphism class $\mathcal{F}_{r}\in\mathscr{E}(r,0)$
characterized by $H^{0}(X,\mathcal{F}_{r})\ne0$ (a space which is
then one-dimensional). For every $p\in\mathbb{Z}$ we have $p_{X}^{*}\mathcal{F}_{r}\simeq\mathcal{F}_{r}$.

(iv) We have $\mathcal{F}_{1}\simeq\mathcal{O}_{X}$ and for $r\ge2$
there is a non-split extension
\[
0\to\mathcal{F}_{r-1}\to\mathcal{F}_{r}\to\mathcal{O}_{X}\to0.
\]

(v) For every $\mathcal{E}\in\mathscr{E}(r,0)$ there exists a unique
line bundle $\mathcal{L}\in\mathscr{E}(1,0)=Pic^{0}(X)\simeq X$ such
that
\[
\mathcal{E}\simeq\mathcal{F}_{r}\otimes\mathcal{L}.
\]
\end{thm}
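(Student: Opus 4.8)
This is Atiyah's theorem (\cite{At}, Theorem 5), so the plan is simply to reproduce the structure of his argument; I outline how I would organize it. Part (i), the Krull--Schmidt property, is formal: over a projective curve $\mathrm{Hom}(\mathcal{E},\mathcal{F})$ is finite dimensional over $\mathbb{C}$ for any two coherent sheaves, so for an indecomposable bundle $\mathcal{E}$ the ring $\mathrm{End}(\mathcal{E})$ is a finite-dimensional $\mathbb{C}$-algebra with no idempotents besides $0,1$ (a nontrivial idempotent $e$ would split off $e\mathcal{E}$), hence is local; combined with the fact that a bundle of finite rank has \emph{some} finite decomposition into indecomposables, one invokes the Krull--Schmidt theorem valid in additive categories whose indecomposables have local endomorphism rings.

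For (iii)--(iv) I would construct $\mathcal{F}_r$ by induction on $r$. The case $r=1$ is immediate: a nonzero section of a degree-$0$ line bundle on a curve has vanishing divisor of degree $0$, hence vanishes nowhere and trivializes the bundle, so $\mathcal{F}_1\cong\mathcal{O}_X$ and it is the unique degree-$0$ line bundle with a section. Inductively, assuming $\mathcal{F}_{r-1}$ is indecomposable of rank $r-1$ and degree $0$, self-dual, with $h^0=1$, Serre duality on $X$ (where $\omega_X\cong\mathcal{O}_X$) gives
\[
Ext^1(\mathcal{O}_X,\mathcal{F}_{r-1})\cong H^1(X,\mathcal{F}_{r-1})\cong H^0(X,\mathcal{F}_{r-1}^{\vee})^{*}\cong H^0(X,\mathcal{F}_{r-1})^{*},
\]
which is one dimensional; the unique (up to scalar) non-split extension $0\to\mathcal{F}_{r-1}\to\mathcal{F}_r\to\mathcal{O}_X\to0$ defines $\mathcal{F}_r$. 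In the cohomology sequence the connecting map $H^0(\mathcal{O}_X)\to H^1(\mathcal{F}_{r-1})$ is the extension class itself, hence nonzero, so $H^0(\mathcal{F}_r)\cong H^0(\mathcal{F}_{r-1})$ is one dimensional; dualizing the extension shows $\mathcal{F}_r$ is again self-dual; and $\mathcal{F}_r$ is indecomposable because a splitting $\mathcal{F}_r=\mathcal{A}\oplus\mathcal{B}$ with $h^0(\mathcal{B})=0$ would force the extension to split. The auxiliary vanishing statement $H^0(X,\mathcal{F}_r\otimes\mathcal{L})=0$ for $\mathcal{L}\in Pic^0(X)$, $\mathcal{L}\not\cong\mathcal{O}_X$, is proved by the same induction, using $H^0(\mathcal{L})=0$ for nontrivial degree-$0$ line bundles. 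Uniqueness in (iii): if $\mathcal{E}$ is indecomposable of rank $r$, degree $0$, with $h^0(\mathcal{E})\neq0$, a nonzero section must be a sub-bundle inclusion — here one uses that an indecomposable degree-$0$ bundle on an elliptic curve is semistable (every bundle on $X$ is a direct sum of semistable bundles of distinct slopes, since $Ext^1$ from a lower-slope semistable quotient to a higher-slope sub vanishes by Serre duality with $\omega_X\cong\mathcal{O}_X$), so $\mathcal{E}$ has no positive-degree sub-line-bundle; the quotient $\mathcal{E}/\mathcal{O}_X$ has rank $r-1$, degree $0$, and one shows (via the vanishing statement) it has a section, so induction, carried through its indecomposable summands, identifies $\mathcal{E}$ with $\mathcal{F}_r$.

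For (v), the map $Pic^0(X)\to\mathscr{E}(r,0)$, $\mathcal{L}\mapsto\mathcal{F}_r\otimes\mathcal{L}$, preserves rank, degree and indecomposability; it is injective because $\mathcal{F}_r\otimes\mathcal{M}\cong\mathcal{F}_r$ gives $H^0(\mathcal{F}_r\otimes\mathcal{M})\neq0$, forcing $\mathcal{M}\cong\mathcal{O}_X$ by the vanishing statement, and surjective because for $\mathcal{E}\in\mathscr{E}(r,0)$ there is some $\mathcal{L}\in Pic^0(X)$ with $H^0(\mathcal{E}\otimes\mathcal{L})\neq0$ (the locus of such $\mathcal{L}$ is a nonempty effective divisor on $Pic^0(X)\cong X$; equivalently, a degree-$0$ bundle with no cohomology in every $Pic^0$-twist would have vanishing Fourier--Mukai transform, hence be zero), and then $\mathcal{E}\otimes\mathcal{L}$ is indecomposable of rank $r$, degree $0$, with a section, so $\cong\mathcal{F}_r$ by (iii), whence $\mathcal{E}\cong\mathcal{F}_r\otimes\mathcal{L}^{-1}$. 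For (ii): an isogeny $\phi$ multiplies degree by $\deg\phi$ and fixes rank, and $\deg[p]=p^{2}$; for $d=0$ (the only case used in the paper) indecomposability is preserved because by (v) every object of $\mathscr{E}(r,0)$ is $\mathcal{F}_r\otimes\mathcal{L}$ and $[p]^{*}(\mathcal{F}_r\otimes\mathcal{L})=[p]^{*}\mathcal{F}_r\otimes[p]^{*}\mathcal{L}$ with $[p]^{*}\mathcal{L}\in Pic^0(X)$, while $[p]^{*}\mathcal{F}_r\cong\mathcal{F}_r$ since it is indecomposable of rank $r$, degree $0$, and the pullback of the section of $\mathcal{F}_r$ shows $h^0\neq0$, so (iii) applies; the general $d$ follows the same way once the full Atiyah classification of $\mathscr{E}(r,d)$ is in hand.

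The main obstacle is the inductive package inside (iii)/(v): simultaneously proving that an indecomposable degree-$0$ bundle has $h^0\le1$, that $\mathcal{F}_r$ is the unique one attaining it, and the twisted vanishing $H^0(\mathcal{F}_r\otimes\mathcal{L})=0$, all while keeping the induction on rank compatible with the way indecomposable summands can appear in the quotient $\mathcal{E}/\mathcal{O}_X$. The observation that an indecomposable degree-$0$ bundle is automatically semistable — so that nonzero sections are genuine sub-bundle inclusions — is what keeps every extension appearing in the argument under control and makes the bootstrap go through.
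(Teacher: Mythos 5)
The paper offers no proof of this statement---it is quoted from Atiyah's 1957 paper---so there is no internal argument to compare yours against; I am judging your sketch on its own. Your route (Krull--Schmidt from locality of endomorphism rings; the inductive construction of $\mathcal{F}_{r}$ from the one-dimensional $Ext^{1}(\mathcal{O}_{X},\mathcal{F}_{r-1})$; the observation that indecomposable degree-zero bundles are semistable, so that sections are sub-bundle inclusions; Fourier--Mukai for the existence of a cohomologically non-trivial twist in (v)) is the standard modern treatment and is sound in outline for (i) and (iii)--(v). It is genuinely different from Atiyah's original argument, which predates both stability and the Fourier--Mukai transform and proceeds by a more explicit induction on rank through the dimensions of the spaces of sections; your version is shorter but imports heavier machinery.

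Two points need attention, both in (ii)/(iii). First, a fillable gap: to conclude $p_{X}^{*}\mathcal{F}_{r}\simeq\mathcal{F}_{r}$ you invoke the uniqueness in (iii), which requires knowing beforehand that $p_{X}^{*}\mathcal{F}_{r}$ is \emph{indecomposable}; pulling back the section only gives $h^{0}\ne0$. One way to close this is the projection formula: $H^{0}(X,p_{X}^{*}\mathcal{F}_{r})\simeq\bigoplus_{\mathcal{L}\in Pic^{0}(X)[p]}H^{0}(X,\mathcal{F}_{r}\otimes\mathcal{L})$ is one-dimensional by your twisted vanishing statement, and since $p_{X}^{*}\mathcal{F}_{r}$ carries the pulled-back filtration with graded pieces $\mathcal{O}_{X}$, every indecomposable summand is some $\mathcal{F}_{s}$ (its Jordan--H\"older constituents in the category of semistable slope-zero bundles are all $\mathcal{O}_{X}$) and hence contributes a section; so there is exactly one summand.

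Second, and more seriously: part (ii) for general $d$ cannot be ``deferred to the full classification,'' because as stated it is \emph{false}. Take $\mathcal{M}$ a line bundle of degree $1$ and $E=(p_{X})_{*}\mathcal{M}$. Then $E$ has rank $p^{2}$ and degree $1$, and is stable: a semistable sub-bundle $F$ with $\mu(F)>1/p^{2}$ would give $0\ne Hom(F,E)\simeq Hom(p_{X}^{*}F,\mathcal{M})\simeq H^{0}\bigl((p_{X}^{*}F)^{\vee}\otimes\mathcal{M}\bigr)$, yet $(p_{X}^{*}F)^{\vee}\otimes\mathcal{M}$ is semistable of negative slope. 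So $E\in\mathscr{E}(p^{2},1)$. But $p_{X}$ is Galois with group $X[p]$, so $p_{X}^{*}E=p_{X}^{*}(p_{X})_{*}\mathcal{M}\simeq\bigoplus_{t\in X[p]}t^{*}\mathcal{M}$ is a direct sum of $p^{2}$ line bundles. Fortunately only the degree-zero case, which you do prove, is really needed---but note that Corollary \ref{cor:Direct sum of F_r} of the paper applies (ii) to the indecomposable summands \emph{before} their degrees are known to vanish. The repair is exactly the slope decomposition you already introduced: $p_{X}^{*}\mathcal{E}\simeq\mathcal{E}$ forces the finite set of slopes of $\mathcal{E}$ to be stable under multiplication by $p^{2}$, hence all slopes vanish, and from that point on only the degree-zero case of (ii) is used.
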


This gives a complete description of $\mathscr{E}(r,d)$ for $r|d$
(twisting by line bundles) and reduces the study of vector bundles
of a general degree $d$ to the range $0\le d<r$. In loc.cit., Theorem
6, Atiyah related $\mathscr{E}(r,d)$ for $0\le d<r$ to $\mathscr{E}(r-d,d)$
via extensions. Using the Euclidean algorithm, and fixing a degree
1 line bundle, he obtained a bijection between $\mathscr{E}(r,d)$
and $\mathscr{E}((r,d),0).$ We shall not need these results.
\begin{cor}
\label{cor:Direct sum of F_r}Let $\mathcal{E}$ be a vector bundle
on $X$ such that $p_{X}^{*}\mathcal{E}\simeq\mathcal{E}$ for some
$p>1$. Then every indecomposable component of $\mathcal{E}$ has
degree 0, and, after pulling back to an unramified covering $X'\to X$,
we may assume that every indecomposable component of $\mathcal{E}$
is isomorphic to some $\mathcal{F}_{r}$. 
\end{cor}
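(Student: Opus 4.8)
The plan is to use Atiyah's classification (Theorem \ref{thm:Atiyah}) directly, first controlling the degrees of the indecomposable components and then the components themselves. Write $\mathcal{E}=\bigoplus_{\alpha}\mathcal{E}_{\alpha}$ as a direct sum of indecomposables, and let $\mathcal{E}_{\alpha}$ have rank $r_{\alpha}$ and degree $d_{\alpha}$. By part (i) of Theorem \ref{thm:Atiyah}, the isomorphism $p_{X}^{*}\mathcal{E}\simeq\mathcal{E}$ induces a permutation of the indecomposable summands (with multiplicities); but by part (ii), $p_{X}^{*}\mathcal{E}_{\alpha}$ has rank $r_{\alpha}$ and degree $p^{2}d_{\alpha}$. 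Since the multiset of (rank, degree) pairs is preserved by the permutation, the multiset of degrees $\{d_{\alpha}\}$ equals the multiset $\{p^{2}d_{\alpha}\}$. As $p>1$, iterating shows the set of degrees is stable under multiplication by $p^{2}$, which forces every $d_{\alpha}=0$ (a finite set of integers closed under $x\mapsto p^{2}x$ with $p\ge2$ must be $\{0\}$). So every indecomposable component of $\mathcal{E}$ has degree $0$.

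Next I would upgrade this to the statement about the $\mathcal{F}_{r}$. By part (v) of Theorem \ref{thm:Atiyah}, each degree-$0$ indecomposable component can be written as $\mathcal{E}_{\alpha}\simeq\mathcal{F}_{r_{\alpha}}\otimes\mathcal{L}_{\alpha}$ for a unique $\mathcal{L}_{\alpha}\in Pic^{0}(X)\simeq X$. Since $p_{X}^{*}\mathcal{F}_{r}\simeq\mathcal{F}_{r}$ by part (iii), we get $p_{X}^{*}\mathcal{E}_{\alpha}\simeq\mathcal{F}_{r_{\alpha}}\otimes p_{X}^{*}\mathcal{L}_{\alpha}$, and $p_{X}^{*}\mathcal{L}_{\alpha}$ corresponds to $p\cdot[\mathcal{L}_{\alpha}]$ under $Pic^{0}(X)\simeq X$. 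The permutation of summands induced by $p_{X}^{*}\mathcal{E}\simeq\mathcal{E}$ thus sends the point $[\mathcal{L}_{\alpha}]\in X$ to $p\cdot[\mathcal{L}_{\alpha}]$, and some power of this permutation is the identity, so each $[\mathcal{L}_{\alpha}]$ is fixed by multiplication by $p^{n}$ for a suitable $n\ge1$, i.e. $(p^{n}-1)[\mathcal{L}_{\alpha}]=0$. Hence all the $\mathcal{L}_{\alpha}$ are torsion line bundles of bounded order, say of order dividing some $m$.

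Finally, I would choose the unramified cover. Let $\pi:X'\to X$ be the isogeny of multiplication by $m$ (or any isogeny whose kernel contains all the $m$-torsion, e.g. $[m]:X\to X$ viewed as a covering); this is unramified. For any $\mathcal{L}\in Pic^{0}(X)$ of order dividing $m$, one has $\pi^{*}\mathcal{L}\simeq\mathcal{O}_{X'}$, since pullback along $[m]$ kills the $m$-torsion of $Pic^{0}$. Therefore $\pi^{*}\mathcal{E}_{\alpha}\simeq\pi^{*}\mathcal{F}_{r_{\alpha}}\otimes\pi^{*}\mathcal{L}_{\alpha}\simeq\pi^{*}\mathcal{F}_{r_{\alpha}}$, and $\pi^{*}\mathcal{F}_{r_{\alpha}}$ is again the Atiyah bundle $\mathcal{F}_{r_{\alpha}}$ on $X'$ (it is the unique indecomposable of rank $r_{\alpha}$, degree $0$ with a nonzero global section, and pullback along an isogeny preserves this characterization — alternatively use part (iv) and that $\pi^{*}$ is exact together with $\pi^{*}\mathcal{O}_{X}\simeq\mathcal{O}_{X'}$). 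Hence $\pi^{*}\mathcal{E}\simeq\bigoplus_{\alpha}\mathcal{F}_{r_{\alpha}}$ on $X'$, as claimed. The only mildly delicate point is the bookkeeping with the induced permutation on summands and extracting the torsion bound on the $\mathcal{L}_{\alpha}$; everything else is a direct invocation of Atiyah's theorem.
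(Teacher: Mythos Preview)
Your proof is correct and follows essentially the same route as the paper: use the permutation of indecomposable summands induced by $p_{X}^{*}$ to force all degrees to vanish, invoke Atiyah's description $\mathcal{E}_{\alpha}\simeq\mathcal{F}_{r_{\alpha}}\otimes\mathcal{L}_{\alpha}$ together with $p_{X}^{*}\mathcal{L}\simeq\mathcal{L}^{p}$ to conclude that the $\mathcal{L}_{\alpha}$ are torsion, and then pass to an unramified cover killing this torsion. The only cosmetic difference is that you phrase the degree step via the multiset $\{d_{\alpha}\}=\{p^{2}d_{\alpha}\}$, whereas the paper argues via a cycle $(p_{X}^{n+m})^{*}\mathcal{E}_{i}\simeq(p_{X}^{n})^{*}\mathcal{E}_{i}$; the content is identical.
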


\begin{proof}
If $\{\mathcal{E}_{i}\}$ are the indecomposable components, for every
$i$ there exists a $j$ such that $p_{X}^{*}\mathcal{E}_{i}\simeq\mathcal{E}_{j}.$
It follows that for every $i$ there are $n,m\ge1$ such that $(p_{X}^{n+m})^{*}\mathcal{E}_{i}\simeq(p_{X}^{n})^{*}\mathcal{E}_{i},$
hence if $d$ were the degree of $\mathcal{E}_{i},$ $p^{2(n+m)}d=p^{2n}d,$
and $d=0.$ Write $\mathcal{E}_{i}\simeq\mathcal{F}_{r}\otimes\mathcal{L}$
for a line bundle $\mathcal{L}\in Pic^{0}(X).$ Then since $p_{X}^{*}\mathcal{F}_{r}\simeq\mathcal{F}_{r},$
while $k_{X}^{*}\mathcal{L}\simeq\mathcal{L}^{k}$ (\cite{Mum}, p.75)
\[
\mathcal{L}^{p^{n+m}}\simeq(p_{X}^{n+m})^{*}\mathcal{L}\simeq(p_{X}^{n})^{*}\mathcal{L}\simeq\mathcal{L}^{p^{n}},
\]
so $\mathcal{L}$ is torsion of order $p^{n+m}-p^{n}.$ Let $\pi:X'\to X$
be an unramified covering so that $\pi^{*}\mathcal{L}\simeq\mathcal{O}_{X'}.$
Since $\pi^{*}\mathcal{F}_{r,X}\simeq\mathcal{F}_{r,X'}$ we draw
the desired conclusion.
\end{proof}

\subsection{The stack $Bun_{r}(X)$}

Let $X$ be a complex elliptic curve, $K$ its function field, $K_{x}$
($x\in X$ a closed point) the completion of $K$ at $x,$ $O_{x}\subset K_{x}$
its valuation ring, $\mathbb{A}$ the ring of adèles, i.e. the restricted
product of all $(K_{x},O_{x})$ for $x\in X,$ and $\mathbb{O}=\prod_{x\in X}O_{x}$
its maximal compact subring. Let $G=GL_{r}$ and
\[
Bun_{r}(X)=G(K)\setminus G(\mathbb{A})/G(\mathbb{O}).
\]
Let $\eta$ be the generic point of $X$.

If $\mathcal{E}$ is a vector bundle of rank $r$ over $X$ we choose
isomorphisms
\[
\forall x\in X:\,\,\alpha_{x}:\widehat{\mathcal{E}}_{x}\simeq O_{x}^{r},\,\,\,\,\alpha_{\eta}:\mathcal{E}_{\eta}\simeq K^{r},
\]
and extend them to isomorphisms $\alpha_{x}:\widehat{\mathcal{E}}_{x}\otimes_{O_{x}}K_{x}\simeq K_{x}^{r}$
and $\alpha_{\eta}:\mathcal{E}_{\eta}\otimes_{K}K_{x}\simeq K_{x}^{r}$.
For all but finitely many $x\in X$, $\alpha_{\eta}\circ\alpha_{x}^{-1}\in G(O_{x}).$
The double coset 
\[
\beta(\mathcal{E})=[(\alpha_{\eta}\circ\alpha_{x}^{-1})_{x\in X}]\in Bun_{r}(X)
\]
depends only on the isomorphism class of $\mathcal{E}$ and not on
our choices. The following is well-known.
\begin{prop}
The map $\mathcal{E}\mapsto\beta(\mathcal{E})$ is a bijection between
isomorphism classes of vector bundles of rank $r$ on $X$ and $Bun_{r}(X).$
\end{prop}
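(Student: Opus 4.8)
The plan is to produce an explicit inverse to $\beta$, using the classical dictionary between a vector bundle and the collection of its local lattices inside a fixed generic trivialization (this is Weil's adelic uniformization of $Bun_r$; cf.\ \cite{Zhu}).

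\emph{Well-definedness.} First I would check that $\beta$ depends only on the isomorphism class of $\mathcal{E}$. If $\alpha_{\eta}$ is replaced by $g\circ\alpha_{\eta}$ with $g\in G(K)$, then each $g_{x}=\alpha_{\eta}\circ\alpha_{x}^{-1}$ is replaced by $g\,g_{x}$; if $\alpha_{x}$ is replaced by $h_{x}\circ\alpha_{x}$ with $h_{x}\in G(O_{x})$, then $g_{x}$ is replaced by $g_{x}h_{x}^{-1}$. In either case $g_{x}$ remains in $G(O_{x})$ for almost all $x$, and the double coset in $G(K)\setminus G(\mathbb{A})/G(\mathbb{O})$ is unchanged. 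An isomorphism $\mathcal{E}\to\mathcal{E}'$ carries one system of trivializations into another of the same kind, so $\beta(\mathcal{E})=\beta(\mathcal{E}')$.

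\emph{The reconstruction lemma.} The core point is: given an $r$-dimensional $K$-vector space $V$ and, for each closed point $x$, an $O_{x}$-lattice $L_{x}\subset V\otimes_{K}K_{x}$ (an $O_{x}$-submodule with $L_{x}\otimes_{O_{x}}K_{x}=V\otimes_{K}K_{x}$) such that $L_{x}$ coincides with a fixed $O_{x}$-structure for all but finitely many $x$ (this is exactly the restricted-product condition built into $\mathbb{A}$), there is a unique coherent subsheaf $\mathcal{E}$ of the constant sheaf $\underline{V}$ on $X$ with $\widehat{\mathcal{E}}_{x}=L_{x}$ for every $x$, and it is locally free of rank $r$. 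Uniqueness and coherence are formal, since a coherent subsheaf of $\underline{V}$ is recovered from, and glued out of, its completed stalks; away from the finite exceptional set $S$ the sheaf is literally free of rank $r$, while near a point $x\in S$ one passes from $\mathcal{O}_{X}^{r}$ to $\mathcal{E}$ by an elementary-divisor modification at $x$ alone (equivalently, a Beauville--Laszlo gluing of the formal lattice $L_{x}$ with the generic bundle), which is again locally free of rank $r$. I expect this patching step to be the only genuine obstacle; everything else is bookkeeping with double cosets.

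\emph{Surjectivity and injectivity.} Given a class in $Bun_{r}(X)$, pick a representative $(g_{x})_{x}\in G(\mathbb{A})$ with $g_{x}\in G(O_{x})$ for almost all $x$, take $V=K^{r}$ and $L_{x}=g_{x}O_{x}^{r}\subset K_{x}^{r}$, and apply the reconstruction lemma to get $\mathcal{E}$. Computing $\beta(\mathcal{E})$ with $\alpha_{\eta}$ the tautological trivialization and $\alpha_{x}$ chosen so that $\alpha_{x}(\widehat{\mathcal{E}}_{x})=O_{x}^{r}$, one finds $\alpha_{\eta}\circ\alpha_{x}^{-1}$ differs from $g_{x}$ by an element of $G(O_{x})$ at every $x$, so $\beta(\mathcal{E})$ is the given class, and $\beta$ is surjective. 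For injectivity, suppose $\beta(\mathcal{E})=\beta(\mathcal{E}')$. Fix generic trivializations $\alpha_{\eta},\alpha'_{\eta}$ identifying $\mathcal{E}_{\eta}$ and $\mathcal{E}'_{\eta}$ with $K^{r}$; after multiplying $\alpha'_{\eta}$ by the element of $G(K)$ implementing the equality of double cosets, the local matrices of the two systems differ at each $x$ by an element of $G(O_{x})$, i.e.\ the lattices $\alpha_{\eta}(\widehat{\mathcal{E}}_{x})$ and $\alpha'_{\eta}(\widehat{\mathcal{E}'}_{x})$ agree in $K_{x}^{r}$ for all $x$. By the uniqueness half of the reconstruction lemma, $\mathcal{E}$ and $\mathcal{E}'$ coincide as subsheaves of $\underline{K^{r}}$ (via $\alpha_{\eta}$ and $\alpha'_{\eta}$), so $\alpha_{\eta}^{-1}\circ\alpha'_{\eta}$ extends to an isomorphism $\mathcal{E}'\simeq\mathcal{E}$.
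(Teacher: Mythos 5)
Your proposal is correct and follows essentially the same route as the paper: the paper also constructs the inverse by assigning to an adelic class $[s]$ the subsheaf $\mathcal{E}(s)(U)=\{e\in\underline{K}^{r}(U)\mid\forall x\in U,\ e_{x}\in s_{x}O_{x}^{r}\}$ of the constant sheaf and asserting it is locally free and inverse to $\beta$. You simply spell out the well-definedness, local-freeness (elementary divisors at the finitely many exceptional points), and the injectivity/surjectivity bookkeeping that the paper leaves implicit.
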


\begin{proof}
We construct a map in the opposite direction. If $s\in G(\mathbb{A})$
let $\mathcal{E}(s)$ be the following subsheaf of the constant sheaf
$\underline{K}^{r}:$
\[
\mathcal{E}(s)(U)=\{e\in\underline{K}^{r}(U)|\,\forall x\in U\,\,e_{x}\in s_{x}O_{x}^{r}\}.
\]
Then $\mathcal{E}(s)$ is a vector bundle, up to isomorphism depends
only on the class $[s]\in Bun_{r}(X),$ $\beta(\mathcal{E}(s))=[s]$
and $\mathcal{E}(\beta(\mathcal{E}))\simeq\mathcal{E}.$
\end{proof}
Let $\pi:X'\to X$ be an unramified covering. It induces maps $K\to K'$
and $K_{\pi(x')}\to K'_{x'}$ between the function fields and their
completion. The latter map induces a map $K_{x}\to\prod_{\pi(x')=x}K'_{x'},$
hence a map $G(\mathbb{A}_{K})\to G(\mathbb{A}_{K'})$, sending $G(\mathbb{O}_{K})$
to $G(\mathbb{O}_{K'}).$ The resulting map
\[
\pi^{*}:Bun_{r}(X)\to Bun_{r}(X')
\]
satisfies
\[
\pi^{*}(\beta_{X}(\mathcal{E}))=\beta_{X'}(\pi^{*}(\mathcal{E})).
\]

Let $\Lambda'\subset\Lambda$ be two lattices in $\mathbb{C}$ and
$\pi:X'=\mathbb{C}/\Lambda'\to\mathbb{C}/\Lambda=X$ the resulting
unramified covering of elliptic curves. If $\xi\in\mathbb{C}$ and
$x=\xi\mod\Lambda$ then $K_{x}$ is identified with $\mathbb{C}((z-\xi)),$
and $K_{x+\omega},$ for $\omega\in\Lambda$, is identified with $K_{x}$
via translation of the variable $z$. Since $\ker(\pi)=\Lambda/\Lambda'$,
if $x'$ is one point above $x$ and we identify $K_{x}$ with $K'_{x'}$
via $\pi^{*},$ the diagonal map 
\[
K_{x}\to\prod_{\pi(y)=x}K'_{y}=\prod_{\omega\in\Lambda/\Lambda'}K'_{x'+\omega}
\]
is induced by the identification $K_{x}\simeq K'_{x'}$ and translation
by $\omega$ for $\omega\in\Lambda/\Lambda'$. Taking the restricted
product of these maps over $x\in\mathbb{C}/\Lambda$ we get the maps
$G(\mathbb{A}_{K})\to G(\mathbb{A}_{K'})$ and $Bun_{r}(X)\to Bun_{r}(X').$

\subsection{Vector bundles on elliptic curves associated with periodic sections
of $\mathscr{F}$}

We let $\mathscr{O}\subset\mathscr{M}$ be the sheaves of holomorphic
and meromorphic functions on $\mathbb{C},$ $\mathscr{H}=G(\mathscr{O})\subset\mathscr{G}=G(\mathscr{M})$
as in example \ref{exa:key example}, and $\mathscr{F}=\mathscr{G}/\mathscr{H}.$
For a lattice $\Lambda$ we denote by $\Gamma_{\Lambda}(\mathbb{C},\mathscr{F})$
the $\Lambda$-periodic global sections of $\mathscr{F},$ i.e. the
global sections $s$ satisfying $t_{\omega}^{*}s=s$ for all $\omega\in\Lambda$.
We write $K_{\Lambda}$ for the function field of $X_{\Lambda}=\mathbb{C}/\Lambda$,
$\mathbb{A}_{\Lambda}$ for its adèles etc. We then have the identification
\[
\Gamma_{\Lambda}(\mathbb{C},\mathscr{F})=\coprod_{x\in X_{\Lambda}}G(K_{x})/G(O_{x})=G(\mathbb{A}_{\Lambda})/G(\mathbb{O}_{\Lambda}).
\]
If $s\in\Gamma_{\Lambda}(\mathbb{C},\mathscr{F})$ we let $[s]\in Bun_{r}(X_{\Lambda})$
be the associated double coset and $\mathcal{E}(s)$ the associated
vector bundle.

The following lemma is easily verified.
\begin{lem}
\label{lem:pull-back by m_p}The vector bundle associated with the
class of $m_{p^{-1}}^{*}s$ is 
\[
\mathcal{E}(m_{p^{-1}}^{*}s)=p_{\Lambda}^{*}\mathcal{E}(s)
\]
where $p_{\Lambda}:X_{\Lambda}\to X_{\Lambda}$ is multiplication
by $p$.
\end{lem}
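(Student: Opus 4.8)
The goal is to check that the vector bundle attached to the double coset $[m_{p^{-1}}^{*}s]\in Bun_{r}(X_{\Lambda})$ is the pull-back $p_{\Lambda}^{*}\mathcal{E}(s)$ under multiplication by $p$. Both sides are built from the same combinatorial data — a $\Lambda$-periodic section of $\mathscr{F}$ on $\mathbb{C}$, repackaged as an adelic double coset — so the proof should be a matter of unwinding the two definitions and seeing that they agree stalk by stalk. I would first recall, from Example \ref{exa:key example}, that $m_{p^{-1}}^{*}s$ has stalk $(m_{p^{-1}}^{*}s)_{x}=\varphi_{p^{-1}}(s_{px})$, i.e. it is obtained from $s$ by precomposition with multiplication by $p$ on $\mathbb{C}$, followed by the identification $\varphi_{p^{-1}}$ on germs. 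Periodicity of $m_{p^{-1}}^{*}s$ under $\Lambda$ is immediate from the equivariance relation \eqref{eq:equivariance} together with $\Lambda$-periodicity of $s$ (it is also a special case of the bookkeeping already done before Theorem \ref{thm:Periodicity theorem}), so $m_{p^{-1}}^{*}s$ genuinely defines a class in $Bun_{r}(X_{\Lambda})$.

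Next I would translate the geometric isogeny $p_{\Lambda}\colon X_{\Lambda}\to X_{\Lambda}$ into adelic language, exactly as in the discussion of $\pi\colon X'\to X$ for $\Lambda'\subset\Lambda$ that precedes this lemma: writing $p_{\Lambda}$ as the covering $\mathbb{C}/p^{-1}\Lambda\to\mathbb{C}/\Lambda$ followed by the canonical identification $\mathbb{C}/p^{-1}\Lambda\cong\mathbb{C}/\Lambda$ induced by $z\mapsto pz$, the induced map on adeles sends a class represented by local data $(g_{x})_{x\in X_{\Lambda}}$ to the class whose local datum at a point $\bar{y}$ (with $p\bar{y}=x$) is $g_{x}$ transported through the local identification $K_{x}\cong K_{\bar y}$, $f(z)\mapsto f(pz)$. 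Comparing this with the formula $(m_{p^{-1}}^{*}s)_{y}=\varphi_{p^{-1}}(s_{py})$ — where $\varphi_{p^{-1}}$ is precisely $f\mapsto f\circ m_{p}$ on germs — shows that the adelic datum of $m_{p^{-1}}^{*}s$ at $y$ is the adelic datum of $s$ at $py$ transported by the same local identification. Hence the two double cosets in $G(K_{\Lambda})\backslash G(\mathbb{A}_{\Lambda})/G(\mathbb{O}_{\Lambda})$ coincide, which by the Proposition identifying $Bun_{r}$ with isomorphism classes of bundles gives $\mathcal{E}(m_{p^{-1}}^{*}s)\cong p_{\Lambda}^{*}\mathcal{E}(s)$.

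Alternatively, and perhaps more transparently, I would argue directly at the level of the sheaf $\mathcal{E}(s)$ constructed as a subsheaf of $\underline{K_{\Lambda}}^{\,r}$ in the proof of the Proposition: one checks that $z\mapsto pz$ identifies the local condition ``$e_{y}\in (m_{p^{-1}}^{*}s)_{y}\,O_{y}^{r}$'' defining $\mathcal{E}(m_{p^{-1}}^{*}s)$ with the pull-back under $p_{\Lambda}$ of the local condition ``$e_{x}\in s_{x}\,O_{x}^{r}$'' defining $\mathcal{E}(s)$, since the latter condition transported by $f\mapsto f(pz)$ is exactly the former. The only genuine point requiring care is the compatibility of the two identifications of local fields $K_{x}\cong K_{\bar y}$ — the ``arithmetic'' one via $\pi^{*}=p_{\Lambda}^{*}$ and the ``analytic'' one via the rescaling $z\mapsto pz$ used to define $\varphi_{p}$ — and that they match is exactly the content of the equivariance diagram \eqref{eq:equivariance}; this is the one step I would write out explicitly rather than leave to the reader. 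Everything else is formal unwinding, which is why the lemma is ``easily verified''.
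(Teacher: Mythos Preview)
Your proposal is correct. The paper does not actually supply a proof: it only says ``The following lemma is easily verified'' and moves on. What you have written is precisely the kind of unwinding the author is leaving to the reader---factoring $p_{\Lambda}$ as the covering $\mathbb{C}/p^{-1}\Lambda\to\mathbb{C}/\Lambda$ (handled in the paragraphs just before the lemma) composed with the rescaling $z\mapsto pz$, and matching the latter with the stalk map $\varphi_{p^{-1}}$ in the formula $(m_{p^{-1}}^{*}s)_{y}=\varphi_{p^{-1}}(s_{py})$---so there is no alternative approach in the paper to compare against.
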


Let $C\in\Gamma(\mathbb{C},\mathscr{G})=G(\mathcal{M})$ be an invertible
$r\times r$ matrix of meromorphic functions. Assume that its image
$\overline{C}$ in $\mathscr{F}$ is $\Lambda$-periodic, i.e.
\[
\overline{C}\in\Gamma_{\Lambda}(\mathbb{C},\mathscr{F}).
\]
We may then consider the vector bundle $\mathcal{E}(\overline{C})$
on $X_{\Lambda}$ associated to the double coset $[\overline{C}]\in Bun_{r}(X_{\Lambda}).$
This double coset is not changed if we multiply $C$ on the left by
a matrix from $G(K_{\Lambda}),$ or from the right by a matrix from
$\Gamma(\mathbb{C},\mathscr{H}),$ i.e. an $r\times r$ invertible
matrix of holomorphic functions whose inverse is also holomorphic.
\begin{lem}
\label{lem:canonical unipotent matrix}Let $\zeta(z,\Lambda)\in\mathcal{M}$
be the Weierstrass zeta function defined in $(\ref{eq:Weierstrass zeta})$.
Let $N_{r}$ be the $r\times r$ nilpotent matrix $(n_{ij})$ with
$n_{ij}=1$ if $j=i+1$ and $0$ otherwise. Let $q\ge1$ be any integer,
$z_{0}\in\mathbb{C}$ any point, and
\[
U_{r}(z)=U_{r}(q,z_{0};z)=\exp(\zeta(qz-z_{0},\Lambda)N_{r})\in\Gamma(\mathbb{C},\mathscr{G}).
\]
Then $\overline{U}_{r}\in\Gamma_{\Lambda}(\mathbb{C},\mathscr{F})$
and $\mathcal{E}(\overline{U}_{r})=\mathcal{F}_{r}$ is Atiyah's vector
bundle of rank $r$ and degree $0$ on $X_{\Lambda}$ (see Theorem
\ref{thm:Atiyah}).
\end{lem}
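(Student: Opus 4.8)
The plan is, first, to check that $\overline{U}_r$ is $\Lambda$-periodic by an explicit quasi-period computation, and then to identify $\mathcal{E}(\overline{U}_r)$ with $\mathcal{F}_r$ by induction on $r$, realizing $\mathcal{E}(\overline{U}_r)$ as the non-split extension of $\mathcal{O}_{X_\Lambda}$ by $\mathcal{F}_{r-1}$ and invoking Atiyah's description of $\mathcal{F}_r$ (Theorem \ref{thm:Atiyah}(iv)). For the periodicity: since $N_r$ is nilpotent, $U_r(z)$ is the unipotent upper-triangular matrix with $(i,j)$-entry $\zeta(qz-z_0,\Lambda)^{j-i}/(j-i)!$ for $i\le j$, so $\det U_r\equiv1$ and the entries have poles only over the finite set $q^{-1}(z_0+\Lambda)/\Lambda\subset X_\Lambda$, where $\zeta(qz-z_0,\Lambda)$ has its simple poles. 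For $\omega\in\Lambda$, using $\zeta(q(z-\omega)-z_0,\Lambda)=\zeta(qz-z_0,\Lambda)-\eta(q\omega,\Lambda)$ and the fact that scalar multiples of $N_r$ commute, one gets $U_r(z-\omega)=U_r(z)\exp(-\eta(q\omega,\Lambda)N_r)$. The correction factor lies in $G(\mathbb{C})\subset\Gamma(\mathbb{C},\mathscr{H})$, and right multiplication by a section of $\mathscr{H}$ does not affect the image in $\mathscr{F}=\mathscr{G}/\mathscr{H}$; hence $t_\omega^{*}\overline{U}_r=\overline{U}_r$, so $\overline{U}_r\in\Gamma_\Lambda(\mathbb{C},\mathscr{F})$, and $\mathcal{E}(\overline{U}_r)$ has rank $r$ and degree $0$ (as $\det U_r\equiv1$). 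Note that $U_r$ itself is \emph{not} $\Lambda$-periodic, so one must work with its coset class, i.e.\ with the local $O_x$-lattices $\widehat{\mathcal{E}(\overline{U}_r)}_x=U_r(x)O_x^{\,r}$.

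Now I induct on $r$. For $r=1$, $N_1=0$, so $U_1\equiv1$ and $\mathcal{E}(\overline{U}_1)=\mathcal{O}_{X_\Lambda}=\mathcal{F}_1$. For $r\ge2$, the subspace spanned by $e_1,\dots,e_{r-1}$ is $N_r$-invariant, with $N_r$ acting there as $N_{r-1}$ and trivially on the quotient line $\mathbb{C}e_r$; hence $U_r$ acts as $U_{r-1}$ on the first $r-1$ coordinates and as the identity on the last. Correspondingly there is a short exact sequence of sheaves on $X_\Lambda$
\[
0\longrightarrow\mathcal{E}(\overline{U}_{r-1})\longrightarrow\mathcal{E}(\overline{U}_r)\longrightarrow\mathcal{O}_{X_\Lambda}\longrightarrow0,
\]
where the first arrow is the inclusion of the first $r-1$ coordinates (this identifies the kernel, because $U_r$ restricted to that subspace is $U_{r-1}$), and the second is $e\mapsto e_r$; the latter is surjective since for every $x$ the lattice $U_r(x)O_x^{\,r}$ contains the last column of $U_r(x)$, whose last entry is $1$, so the induced map on completed stalks $U_r(x)O_x^{\,r}\to O_x$ is onto. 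By the inductive hypothesis $\mathcal{E}(\overline{U}_{r-1})\cong\mathcal{F}_{r-1}$.

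It remains to see that this extension is non-split. A splitting would provide a global section $e=(e_1,\dots,e_{r-1},1)$ of $\mathcal{E}(\overline{U}_r)$ — a vector of $\Lambda$-elliptic functions with $U_r(z)^{-1}e$ holomorphic everywhere on $\mathbb{C}$. But the $(r-1)$-st entry of $U_r(z)^{-1}e$ equals $e_{r-1}-\zeta(qz-z_0,\Lambda)$, so this function would be entire, which is impossible: $e_{r-1}$ is $\Lambda$-periodic whereas $z\mapsto\zeta(qz-z_0,\Lambda)$ is only quasi-periodic with non-trivial periods $-\eta(q\omega,\Lambda)$ that are, by the Legendre relation, not $\mathbb{C}$-linear in $\omega$ (equivalently: an elliptic function cannot share the principal parts of $\zeta(qz-z_0,\Lambda)$, whose residues do not sum to $0$ on $X_\Lambda$). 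Hence the sequence is non-split. Finally, by Riemann--Roch and Theorem \ref{thm:Atiyah}(iii) one has $\dim\mathrm{Ext}^1(\mathcal{O}_{X_\Lambda},\mathcal{F}_{r-1})=h^1(X_\Lambda,\mathcal{F}_{r-1})=h^0(X_\Lambda,\mathcal{F}_{r-1})=1$, so every non-split extension of $\mathcal{O}_{X_\Lambda}$ by $\mathcal{F}_{r-1}$ has middle term isomorphic to that of the extension in Theorem \ref{thm:Atiyah}(iv), namely $\mathcal{F}_r$; therefore $\mathcal{E}(\overline{U}_r)\cong\mathcal{F}_r$, completing the induction. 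The matrix and sheaf-theoretic bookkeeping is routine; the one point of substance is the non-splitness, where the quasi-periodicity of the Weierstrass zeta function — rather than a genuine elliptic function — is exactly the feature that obstructs a $\mathbb{C}$-structure, as anticipated in the introduction.
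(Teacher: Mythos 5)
Your proof is correct and follows essentially the same route as the paper: periodicity via the quasi-period formula for $\zeta$, induction on $r$ through the extension $0\to\mathcal{F}_{r-1}\to\mathcal{E}(\overline{U}_{r})\to\mathcal{O}_{X}\to0$, and identification with $\mathcal{F}_{r}$ via the one-dimensionality of $\mathrm{Ext}^{1}(\mathcal{O}_{X},\mathcal{F}_{r-1})$. The only (minor) difference is in the non-splitness step: the paper first reduces to $r=2$ by quotienting by $\mathcal{F}_{r-2}$ and then does an explicit $2\times2$ double-coset computation, whereas you apply the residue argument directly to the $(r-1)$-st coordinate of $U_{r}^{-1}e$ for a hypothetical splitting section $e$ --- both versions hinge on the same fact, that the residues of $\zeta(qz-z_{0},\Lambda)$ do not sum to zero on $X_{\Lambda}$.
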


\begin{proof}
Since $U_{r}(z+\omega)=U_{r}(z)\cdot\exp(\eta(q\omega,\Lambda)N_{r})$
for every $\omega\in\Lambda$, the first assertion is clear. We prove
the second assertion by induction on $r$, the case $r=1$ being trivial.
Assume the lemma to hold for $r-1$ ($r\ge2$). From the upper-triangular
form of $U_{r}$ and our induction hypothesis we deduce that there
is an extension
\[
0\to\mathcal{F}_{r-1}\to\mathcal{E}(\overline{U}_{r})\to\mathcal{O}_{X}\to0,
\]
where $X=X_{\Lambda}.$ This already shows that $\mathcal{E}(\overline{U}_{r})$
is of degree 0. It is known that $Ext^{1}(\mathcal{O}_{X},\mathcal{F}_{r-1})\simeq H^{1}(X,\mathcal{F}_{r-1})$
is 1-dimensional, and the only non-trivial extension is $\mathcal{F}_{r}$,
so it suffices to show that $\mathcal{E}(\overline{U}_{r})$ is a
non-trivial extension of $\mathcal{O}_{X}$ by $\mathcal{F}_{r-1}.$
For that it is enough to show that we get a non-trivial extension
of $\mathcal{O}_{X}$ by $\mathcal{O}_{X}$ after we mod out by $\mathcal{F}_{r-2}\subset\mathcal{F}_{r-1}.$
Thus we are reduced to showing that when $r=2$ we get a non-trivial
extension, or that
\[
\left(\begin{array}{cc}
1 & \zeta(z)\\
0 & 1
\end{array}\right)\notin G(K_{\Lambda})G(\mathbb{O}_{\Lambda}),
\]
where we have abbreviated $\zeta(z)=\zeta(qz-z_{0},\Lambda).$ If
\[
\left(\begin{array}{cc}
a & b\\
c & d
\end{array}\right)=\left(\begin{array}{cc}
1 & \zeta(z)\\
0 & 1
\end{array}\right)\left(\begin{array}{cc}
\alpha & \beta\\
\gamma & \delta
\end{array}\right)\in G(K_{\Lambda})
\]
where $a,b,c,d$ are $\Lambda$-elliptic and $\alpha,\beta,\gamma,\delta$
are holomorphic, we get that $c=\gamma$ and $d=\delta$, being both
elliptic and holomorphic, are constant. Then
\[
a=\alpha+\gamma\zeta(z)
\]
must have the same residual divisor as that of $\gamma\zeta(z).$
This residual divisor is $q^{-1}\gamma\sum_{q\xi=z_{0}\mod\Lambda}[\xi].$
Since $a$ is elliptic, the sum of its residues on $X_{\Lambda}$
must vanish, so $\gamma=0.$ By the same argument, applied to the
equation 
\[
b=\beta+\delta\zeta(z),
\]
we get $\delta=0.$ This contradiction concludes the proof of the
lemma.
\end{proof}
Note that the class of $\overline{U}_{r}(q,z_{0};z)$ in $Bun_{r}(X_{\Lambda})$
does not depend, as a result, on $q$ or $z_{0}.$ This may be also
checked directly by matrix arithmetic.

\section{Formal $(p,q)$-difference modules\label{sec:Formal--difference-modules}}

\subsection{Formal $p$-difference modules}

In this section we recall some known results about formal $p$-difference
modules, see \cite{vdP-Si}, Chapter 12, and \cite{Sch-Si}, case
$2Q$. Let
\[
\widehat{K}=\mathbb{C}((z))
\]
and let $\mathcal{K}$ be the algebraic closure of $\widehat{K}$.
This is the field of formal Puiseux series
\[
\mathcal{K}=\bigcup_{s\ge1}\mathbb{C}((z^{1/s})).
\]
We extend the action of $\Gamma$ to $\mathcal{K}$ by fixing a compatible
sequence of $s$th roots of $p$ and $q$. To fix ideas, we may take
their positive real roots. Thus still
\[
\sigma f(z)=f(z/p),\,\,\,\tau f(z)=f(z/q).
\]

\begin{thm}
\label{thm:p-difference-modules}(i) Every $p$-difference module
over $\mathcal{K}$ has a unique direct sum decomposition
\[
M=\bigoplus_{\lambda\in\mathbb{Q}}M_{\lambda}
\]
where $M_{\lambda}\simeq\mathcal{K}^{r}$ with
\begin{equation}
\Phi_{\sigma}(v)=z^{\lambda}A_{0}^{-1}\sigma(v),\label{eq:Phi_sigma}
\end{equation}
and $A_{0}$ is a scalar invertible matrix in Jordan canonical form.

(ii) Let $c_{1},\dots,c_{k}\in\mathbb{C}$ be the eigenvalues of the
matrix $A_{0}$ appearing in the description of $M_{\lambda}$ for
some $\lambda.$ If $v\in M$ and $\Phi_{\sigma}v=z^{\lambda}c^{-1}v$
for some $0\ne c\in\mathbb{C},$ then $v\in M_{\lambda}$ and there
exists an $i$ and a rational number $\alpha$ such that $c=p^{\alpha}c_{i}.$
Conversely, for any $1\le i\le k$ and $\alpha\in\mathbb{Q}$ there
exists a $v\in M_{\lambda}$ such that $\Phi_{\sigma}v=z^{\lambda}p^{-\alpha}c_{i}^{-1}v.$
\end{thm}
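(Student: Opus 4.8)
The plan is to read (i) off the classical formal classification of $q$-difference modules, specialized to the case where the dilation factor is the integer $p\ge2$, and to deduce (ii) from the resulting normal form by a direct computation with Puiseux series.

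For (i) I would proceed in the standard way. Fix $s$ with $M$ defined over $\widehat K_s=\mathbb C((z^{1/s}))$ and a basis in which $\Phi_\sigma$ has matrix $A(z)^{-1}$; the Newton polygon of $A$ produces a finite set of rational slopes $\lambda$ and a $\Phi_\sigma$-stable decreasing filtration $M_{\ge\lambda}$ whose graded pieces are isoclinic. Since the residue field $\mathbb C$ is algebraically closed and $\mathcal K$ contains all roots of $z$, this filtration splits as a direct sum of isoclinic pieces, and each isoclinic piece of slope $\lambda$, after tensoring with the rank-one module on which $\Phi_\sigma$ acts by $z^{-\lambda}\sigma$, becomes regular singular, hence is trivialized to the form $\Phi_\sigma=A_0^{-1}\sigma$ with a constant invertible $A_0$; tensoring back and conjugating $A_0$ into Jordan form by a constant gauge transformation yields the stated shape of $M_\lambda$. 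For all of this I would cite \cite{vdP-Si}, Chapter 12, and \cite{Sch-Si}, case 2Q. The one point worth flagging, and the genuinely technical one, is the trivialization of a regular singular module: the inductive construction of the trivializing gauge matrix is obstructed at finitely many orders by resonances $c_i/c_j\in p^{(1/s)\mathbb Z}$ among the eigenvalues of $A_0$, whose resolution requires a shearing / $p$-class bookkeeping, and it is here that $p\ge2$ (i.e. $|p|>1$) enters; the upshot is that $A_0$, and in particular its eigenvalues, is \emph{not} canonically attached to $M_\lambda$, which is exactly what (ii) quantifies.

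For the uniqueness clause in (i) I would characterize the summands intrinsically. Given any decomposition of the stated form and $v\in M_\mu$, iterating $\Phi_\sigma=z^\mu A_0^{-1}\sigma$ gives $\Phi_\sigma^{\pm n}v=(\text{nonzero scalar})\cdot z^{\pm n\mu}A_0^{\mp n}\sigma^{\pm n}(v)$, and since $\sigma$ and the invertible constant matrix preserve the $z$-adic valuation $v_z$ one obtains $v_z(\Phi_\sigma^{\pm n}v)=\pm n\mu+v_z(v)$. Hence for $v=\sum_\mu v_\mu$ the asymptotics of $v_z(\Phi_\sigma^{n}v)$ (resp. $v_z(\Phi_\sigma^{-n}v)$) as $n\to\infty$ are governed by the smallest (resp. largest) $\mu$ with $v_\mu\ne0$; so $M_{\ge\lambda}$, defined as the set of $v$ for which $v_z(\Phi_\sigma^{n}v)-n\lambda$ is bounded below, equals $\bigoplus_{\mu\ge\lambda}M_\mu$, the analogous $M_{\le\lambda}$ equals $\bigoplus_{\mu\le\lambda}M_\mu$, and therefore $M_\lambda=M_{\ge\lambda}\cap M_{\le\lambda}$ is independent of the chosen decomposition.

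For (ii), since $\Phi_\sigma$ respects $M=\bigoplus_\mu M_\mu$, an eigenvector $v$ with $\Phi_\sigma v=z^\lambda c^{-1}v$ has each component $v_\mu\in M_\mu$ satisfying the same equation; comparing the $z$-adic valuations of the two sides inside $M_\mu$, where $\Phi_\sigma$ has the form $z^\mu B_0^{-1}\sigma$ with $B_0$ invertible, forces $\mu=\lambda$ whenever $v_\mu\ne0$, so $v\in M_\lambda$. Writing $v=\sum_j w_jz^{j/s}\in M_\lambda$ and using $\Phi_\sigma=z^\lambda A_0^{-1}\sigma$, the equation $A_0^{-1}\sigma(v)=c^{-1}v$ becomes $A_0w_j=cp^{-j/s}w_j$ for every $j$, so any $j$ with $w_j\ne0$ exhibits $cp^{-j/s}$ as an eigenvalue $c_i$ of $A_0$, i.e. $c=p^{\alpha}c_i$ with $\alpha=j/s\in\mathbb Q$; conversely, for given $i$ and $\alpha\in\mathbb Q$ one takes a genuine eigenvector $u\in\mathbb C^{r_\lambda}$ of $A_0$ for $c_i$ and checks, using $\sigma(z^\alpha)=p^{-\alpha}z^\alpha$, that $v=z^\alpha u\in M_\lambda$ satisfies $\Phi_\sigma v=z^\lambda p^{-\alpha}c_i^{-1}v$. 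I expect nothing in (ii) to be hard once the constancy of $A_0$ from (i) is in hand; the real content of the theorem, and its only subtle point, is the resonance bookkeeping in the trivialization step of (i), which is nevertheless entirely standard.
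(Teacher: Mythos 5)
Your proposal is correct and is consistent with what the paper does: the paper offers no proof of this theorem at all, but simply recalls it from \cite{vdP-Si}, Chapter 12, and \cite{Sch-Si}, case 2Q, which are exactly the sources you invoke for the existence of the slope decomposition and the trivialization of the regular singular pieces. The supplementary arguments you supply beyond those references --- the intrinsic characterization of $M_{\ge\lambda}$ via the asymptotics of $v_{z}(\Phi_{\sigma}^{n}v)$ for uniqueness, and the coefficientwise computation $A_{0}w_{j}=cp^{-j/s}w_{j}$ for part (ii) --- are correct and fill in verifications the paper leaves implicit.
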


The $\lambda$ which appear in the decomposition are called the \emph{slopes}
of $M$.

\bigskip{}

If $M$ is a $p$-difference module over $\widehat{K}$, then we can
extend scalars to $\mathcal{K}$ and apply the classification theorem
over $\mathcal{K}.$ The slopes of $M$ are by definition the slopes
of $M_{\mathcal{K}}.$ Theorem \ref{thm:p-difference-modules} is
supplemented by the following Proposition.
\begin{prop}
\label{prop:slope 0 p-difference modules}Let $M$ be a $p$-difference
module over $\widehat{K}.$ If the only slope of $M$ is 0, then the
consequence of Theorem \ref{thm:p-difference-modules} holds already
over $\widehat{K}.$ In other words, $M$ has a basis on which the
action of $\Phi_{\sigma}$ is given by a scalar matrix $A_{0}^{-1}$
where $A_{0}$ is in Jordan canonical form.

Furthermore, $A_{0}$ can be taken to be $p$-restricted, i.e. with
eigenvalues $c$ satisfying
\[
1\le|c|<p.
\]
Such an $A_{0}$ is then unique up to a permutation of the Jordan
blocks.
\end{prop}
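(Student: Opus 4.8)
The plan is to prove Proposition \ref{prop:slope 0 p-difference modules} in two stages: first produce a $\widehat{K}$-rational descent of the Jordan form from $\mathcal{K}$, and then normalize the eigenvalues to lie in the annulus $1\le|c|<p$, treating uniqueness along the way. Throughout I identify $M$ with $\widehat{K}^{r}$ and write $\Phi_\sigma(v)=A^{-1}\sigma(v)$ for a matrix $A\in GL_r(\widehat{K})$; the task is to find $C\in GL_r(\widehat{K})$ with $\sigma(C)^{-1}AC=A_0$ a scalar Jordan matrix, given that over $\mathcal{K}$ such a $C$ exists with entries in some $\mathbb{C}((z^{1/s}))$.

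\textbf{Step 1: Descent from $\mathcal{K}$ to $\widehat{K}$.} By Theorem \ref{thm:p-difference-modules}(i), over $\mathcal{K}$ we have $M_{\mathcal{K}}\cong M_0$, so there is $\widetilde{C}\in GL_r(\mathcal{K})$ with $\sigma(\widetilde C)^{-1}A\widetilde C=A_0$, $A_0$ scalar in Jordan form. Fix $s$ with $\widetilde C\in GL_r(\mathbb{C}((z^{1/s})))$, and let $\mu$ be a primitive $s$-th root of unity; let $\rho$ be the automorphism of $\mathbb{C}((z^{1/s}))$ over $\widehat{K}$ sending $z^{1/s}\mapsto\mu z^{1/s}$, and extend our choice of $s$-th roots of $p$ compatibly so that $\rho$ commutes with $\sigma$ (this is possible precisely because $\sigma$ acts on $z^{1/s}$ by the scalar $p^{-1/s}$, which commutes with multiplication by $\mu$). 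Then $\rho(\widetilde C)$ also conjugates $A$ to a scalar Jordan matrix $\rho(A_0)=A_0$ (since $A_0$ has complex entries), so $D:=\widetilde C^{-1}\rho(\widetilde C)$ satisfies $\sigma(D)=A_0^{-1}\sigma(\widetilde C)^{-1}\sigma(\rho(\widetilde C))$, i.e. $\sigma(D)^{-1}A_0 D=A_0$: $D$ is an automorphism of the scalar module $(\mathcal{K}^r,A_0^{-1}\sigma)$. I claim every such automorphism already has complex entries. Indeed, writing $D=\sum_k D_k z^{k/s}$, the equation $A_0 D=\sigma(D)A_0$ reads, for each homogeneous piece, $A_0 D_k = p^{-k/s}D_k A_0$; since the eigenvalues of $A_0$ all have absolute value $1$ (slope $0$), the operator $X\mapsto A_0 X A_0^{-1}$ has eigenvalues of absolute value $1$, so $p^{-k/s}$ can only be such an eigenvalue when $k=0$. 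Hence $D=D_0\in GL_r(\mathbb{C})$, and more precisely $D$ lies in $G(\mathbb{C})$. Therefore the cocycle $\rho\mapsto D$ is a cocycle valued in the centralizer $Z\subset GL_r(\mathbb{C})$ of $A_0$; since $Z$ is a connected (hence has trivial Galois cohomology over the cyclic group $\mathrm{Gal}(\mathbb{C}((z^{1/s}))/\widehat K)$ — more simply, by Hilbert 90 applied to $GL_r$, or by a direct averaging) the cocycle is a coboundary: $D=\widetilde E^{-1}\rho(\widetilde E)$ for some $\widetilde E\in Z\subset GL_r(\mathbb{C}((z^{1/s})))$ — but as $D$ has constant entries and $Z$ is a group variety, one can in fact just solve a linear system; alternatively invoke that $H^1(\mathrm{Gal},GL_r)=1$. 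Setting $C:=\widetilde C\widetilde E$, one checks $\rho(C)=C$, so $C\in GL_r(\widehat K)$ and $\sigma(C)^{-1}AC=\widetilde E^{-1}A_0\widetilde E$, still a scalar matrix which we may retake in Jordan form. This proves the first assertion.

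\textbf{Step 2: Normalization to $1\le|c|<p$ and uniqueness.} Having $A_0$ scalar in Jordan form, I reduce modulo powers of $p$ one Jordan block at a time. If $A_0$ has a Jordan block with eigenvalue $c$ and we want to replace $c$ by $p^{-n}c$ for $n\in\mathbb{Z}$, multiply the corresponding block of basis vectors by $z^{n}$: since $\sigma(z^{n})=p^{-n}z^{n}$, conjugating by $\mathrm{diag}$-block $z^{n}I$ sends the block $cJ$ to $p^{-n}c\,J$ while leaving the nilpotent part intact and not affecting other blocks (a gauge transformation by $C=\bigoplus z^{n_i}I_{d_i}\in GL_r(\widehat K)$). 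Choosing the $n_i$ so that $1\le|p^{-n_i}c_i|<p$ makes $A_0$ $p$-restricted. For uniqueness: suppose $A_0$ and $A_0'$ are two $p$-restricted Jordan matrices with $\sigma(C)^{-1}A_0 C=A_0'$ for some $C\in GL_r(\widehat K)$. Expanding $C=\sum_{k\ge k_0}C_k z^k$ and comparing lowest-order terms in $A_0 C=\sigma(C)A_0'$ gives $A_0 C_{k_0}=p^{-k_0}C_{k_0}A_0'$; taking absolute values of eigenvalues and using $1\le|c|,|c'|<p$ forces $k_0=0$ and then $A_0 C_0=C_0 A_0'$ with $C_0\in GL_r(\mathbb{C})$ (one must also argue $C_0$ is invertible, which follows because the leading coefficient of $\det C$ is $\det C_0\ne 0$ as $C\in GL_r(\widehat K)$). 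Hence $A_0$ and $A_0'$ are conjugate over $\mathbb{C}$, so they have the same Jordan blocks up to permutation.

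\textbf{Main obstacle.} The delicate point is Step 1, the rationality descent: one must set up the $s$-th-root-of-unity action so that it commutes with $\sigma$, and then observe that the relevant "automorphism cocycle" is forced to take values in $GL_r(\mathbb{C})$ — this is exactly where the slope-$0$ hypothesis ($|c|=1$ for all eigenvalues) enters, via the spectral computation $A_0 D_k=p^{-k/s}D_k A_0\Rightarrow k=0$. Once that is in hand, the vanishing of $H^1$ for $GL_r$ and the eigenvalue normalization in Step 2 are routine. I would also flag that the same $|c|=1$ spectral argument is reused verbatim in the uniqueness part, so it is worth isolating as a small lemma: \emph{if $A_0,A_1\in GL_r(\mathbb{C})$ have all eigenvalues of absolute value in a fixed half-open interval $[\alpha,p\alpha)$ and $\sigma(C)^{-1}A_0C=A_1$ with $C\in GL_r(\widehat K)$, then $C\in GL_r(\mathbb{C})$ and $A_0,A_1$ are conjugate.}
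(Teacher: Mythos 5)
The paper does not actually prove this Proposition (it is recalled from \cite{vdP-Si}, Chapter 12, and \cite{Sch-Si}), so your argument has to stand on its own, and there is a genuine error at the heart of Step 1. You assert that the slope-$0$ hypothesis means ``the eigenvalues of $A_{0}$ all have absolute value $1$'', and you use this to conclude that $p^{-k/s}$ cannot be an eigenvalue of $X\mapsto A_{0}XA_{0}^{-1}$ for $k\ne 0$, hence that $D$ is constant. That is a misreading of the definition: the slope is the exponent $\lambda$ in $\Phi_{\sigma}(v)=z^{\lambda}A_{0}^{-1}\sigma(v)$ and says nothing about $|c|$. The rank-one module $\Phi_{\sigma}(v)=7^{-1}\sigma(v)$ has slope $0$ and eigenvalue $7$; and if slope $0$ forced $|c|=1$, the entire second clause of the Proposition (that $A_{0}$ ``can be taken'' $p$-restricted) would be vacuous. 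Theorem \ref{thm:p-difference-modules}(ii) makes clear that the eigenvalues over $\mathcal{K}$ are only well defined up to $p^{\mathbb{Q}}$, so in general they cannot be normalized to the unit circle. Since you yourself flag this spectral computation as ``exactly where the slope-$0$ hypothesis enters'', the gap is central rather than cosmetic. It is repairable: using the $p^{\mathbb{Q}}$-ambiguity of Theorem \ref{thm:p-difference-modules}(ii), first rescale the basis of each generalized eigenspace over $\mathcal{K}$ by a suitable $z^{\alpha}$, $\alpha\in\mathbb{Q}$, so that any two eigenvalues of $A_{0}$ whose ratio lies in $p^{\mathbb{Q}}$ become equal; then $c_{i}/c_{j}=p^{-k/s}$ forces $k=0$, and your conclusion $D=D_{0}\in GL_{r}(\mathbb{C})$ does follow.

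Two further points need attention. First, after splitting the cocycle by $\widetilde{E}$ the new matrix is $\sigma(\widetilde{E})^{-1}A_{0}\widetilde{E}$, not $\widetilde{E}^{-1}A_{0}\widetilde{E}$, and a generic Hilbert-90 splitting in $GL_{r}(\mathbb{C}((z^{1/s})))$ will not make this constant. Since $D$ is a constant matrix of finite order $s$ commuting with $A_{0}$, its eigenspaces are $A_{0}$-stable, and you should split the cocycle explicitly by letting $\widetilde{E}$ act as an appropriate fractional power $z^{\pm a/s}$ on the $\mu^{a}$-eigenspace of $D$; then $\rho(C)=C$ and $\sigma(\widetilde{E})^{-1}A_{0}\widetilde{E}$ is again constant, with eigenvalues multiplied by rational powers of $p$, after which Step 2 applies. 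Second, in the uniqueness argument the claim that ``the leading coefficient of $\det C$ is $\det C_{0}$'' is false in general (e.g.\ $C=\mathrm{diag}(1,z)$); but you do not need it, because the equation $A_{0}C_{k}=p^{-k}C_{k}A_{0}'$ holds for \emph{every} $k$, not only the lowest, and the $p$-restricted condition kills all $C_{k}$ with $k\ne 0$, so $C=C_{0}$ is automatically invertible. The eigenvalue normalization in Step 2 and the uniqueness conclusion are otherwise fine.
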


\subsection{Formal $(p,q)$-difference modules}

Let $\Gamma=\left\langle \sigma,\tau\right\rangle \subset Aut(\mathcal{K})$
as before. Let $M$ be a formal $(p,q)$-difference module over $\mathcal{K}$.
Thus $M$ is simultaneously a $p$-difference module and a $q$-difference
module, and these structures commute with each other.

Let $\lambda$ be a $p$-slope of $M.$ Then there exists a vector
$v\in M$ with $\Phi_{\sigma}v=z^{\lambda}c^{-1}v$ for some $c\in\mathbb{C}.$
Applying $\Phi_{\tau}$ we find out that
\[
\Phi_{\sigma}(\Phi_{\tau}v)=\Phi_{\tau}(\Phi_{\sigma}v)=\Phi_{\tau}(z^{\lambda}c^{-1}v)=z^{\lambda}q^{-\lambda}c^{-1}(\Phi_{\tau}v).
\]
Let $I_{\lambda,c}$ denote the rank-1 $p$-difference module over
$\mathcal{K}$ defined by 
\[
I_{\lambda,c}=\mathcal{K}e,\,\,\,\Phi_{\sigma}e=z^{\lambda}c^{-1}e.
\]
The above argument shows that if $M$ contains a copy of $I_{\lambda,c},$
it contains also a copy of $I_{\lambda,q^{\lambda}c}.$ Part (ii)
of Theorem \ref{thm:p-difference-modules} implies that the only rank-1
submodules of $M$ of slope $\lambda$ are of the form $I_{\lambda,p^{\alpha}c_{i}}$
for a finite list $\{c_{1},\dots,c_{k}\}$. We conclude that for some
$i,n\ge1$ and $\alpha\in\mathbb{Q}$
\[
q^{n\lambda}c_{i}=p^{\alpha}c_{i}.
\]
If $p$ and $q$ are multiplicatively independent, this forces $\lambda=\alpha=0.$
We conclude that the only possible $p$-slope of $M$ is 0, and similarly
the only $q$-slope is 0. In the language of difference modules, $M$
is \emph{regular singular}.

Assume now that $M$ is a $(p,q)$-difference module over $\widehat{K}$
given by a pair of matrices $(A,B)$ satisfying $(\ref{eq:consistency_equation})$
and that $p$ and $q$ are multiplicatively independent. Extending
scalars to $\mathcal{K}$ it follows from the above discussion that
the only slope of $M$ is $(0,0).$ Proposition \ref{prop:slope 0 p-difference modules}
implies that already over $\widehat{K}$ the pair $(A,B)$ is gauge-equivalent
to a pair $(A_{0},B_{0})$ where $A_{0}$ is a scalar matrix, with
eigenvalues in the range $1\le|c|<p$. The consistency equation
\[
A_{0}B_{0}(z)=B_{0}(z/p)A_{0}
\]
now forces $B_{0}$ to be constant too. To see it write
\[
B_{0}(z)=\sum_{i\in\mathbb{Z}}M_{i}z^{i},
\]
with $M_{i}\in M_{r}(\mathbb{C})$, so that
\[
A_{0}M_{i}A_{0}^{-1}=p^{-i}M_{i}.
\]
The eigenvalues of $A_{0}$ in its action on $M_{r}(\mathbb{C})$
by conjugation are each a quotient of two eigenvalues of $A_{0}.$
By our assumption, $p^{i}$ is not among them for $i\ne0.$ This proves
that $M_{i}=0$ for $i\ne0$ and $B_{0}$ is constant as well.

Recall that by definition, the cohomology set $H^{1}(\Gamma,G(\widehat{K}))$
is the set of equivalence classes of pairs $(A,B)$ of matrices from
$G(\widehat{K})$ satisfying the consistency condition ($\Gamma$-\emph{cocycles}),
up to gauge equivalence (the relation of being \emph{cohomologous})\emph{.
}Similarly, $H^{1}(\Gamma,G(\mathbb{C}))$ is the set of equivalence
classes of commuting pairs $(A,B)$ of scalar matrices (i.e. homomorphisms
$\varphi:\Gamma\to G(\mathbb{C})$),\emph{ }up to conjugation\emph{.}
Denote by $H^{1}(\Gamma,G(\mathbb{C}))^{p-\mathrm{restricted}}$ the
collection of such homomorphisms $\varphi$ for which $A_{0}=\varphi(\sigma)$
is $p$-restricted, up to conjugation. We have proved the following.
\begin{thm}
\label{thm:Formal main theorem}The map $H^{1}(\Gamma,G(\mathbb{C}))\to H^{1}(\Gamma,G(\widehat{K}))$
induces a bijection
\[
H^{1}(\Gamma,G(\mathbb{C}))^{p\mathrm{-restricted}}\simeq H^{1}(\Gamma,G(\widehat{K})).
\]
Equivalently, any pair $(A,B)$ of matrices from $G(\widehat{K})$
satisfying $(\ref{eq:consistency_equation})$ can be reduced by a
gauge transformation $(\ref{eq:guage_transformations})$ with $C\in G(\widehat{K})$
to a pair $(A_{0},B_{0})$ of matrices from $G(\mathbb{C})$, where
$A$ is $p$-restricted, and such a pair $(A_{0},B_{0})$ is unique
up to conjugation.
\end{thm}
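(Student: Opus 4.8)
The statement has two halves — that every class in $H^{1}(\Gamma,G(\widehat{K}))$ is represented by a $p$-restricted scalar pair (surjectivity), and that such a representative is unique up to conjugation in $G(\mathbb{C})$ (injectivity) — and the plan is to observe that the surjectivity has in effect already been established in the discussion preceding the theorem. Indeed, starting from a consistent pair $(A,B)\in G(\widehat{K})^{2}$, multiplicative independence of $p$ and $q$ forces the underlying formal $(p,q)$-difference module to have only the slope $(0,0)$; Proposition \ref{prop:slope 0 p-difference modules} then provides a gauge transformation $(\ref{eq:guage_transformations})$ over $\widehat{K}$ taking $A$ to a $p$-restricted scalar matrix $A_{0}$ in Jordan form, and the consistency equation $A_{0}B_{0}(z)=B_{0}(z/p)A_{0}$ together with the observation that no $p^{i}$ with $i\neq0$ is a ratio of two $p$-restricted eigenvalues forces $B_{0}$ to be constant. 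One should also record that ``$A_{0}$ is $p$-restricted'' is a conjugation-invariant condition, so that $H^{1}(\Gamma,G(\mathbb{C}))^{p\text{-restricted}}$ is a well-defined subset of $H^{1}(\Gamma,G(\mathbb{C}))$ and the map in question makes sense. Note that Proposition \ref{prop:slope 0 p-difference modules} pins down $A_{0}$ only up to permutation of Jordan blocks, so the uniqueness of the \emph{pair} $(A_{0},B_{0})$ still needs a separate argument.

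For injectivity, suppose $(A_{0},B_{0})$ and $(A_{0}',B_{0}')$ are commuting pairs of scalar matrices, both with $p$-restricted first component, that become gauge-equivalent over $\widehat{K}$ via some $C\in G(\widehat{K})$; in particular $A_{0}C(z)=C(z/p)A_{0}'$. Since $C$ and $C^{-1}$ have entries in $\mathbb{C}((z))$, we may write $C(z)=\sum_{i}C_{i}z^{i}$ with only finitely many negative terms, and comparing coefficients of $z^{i}$ gives $A_{0}C_{i}=p^{-i}C_{i}A_{0}'$ for every $i$. I would read this as saying that $C_{i}$ is a homomorphism of $\mathbb{C}[T]$-modules from $(\mathbb{C}^{r},\,T\mapsto p^{-i}A_{0}')$ to $(\mathbb{C}^{r},\,T\mapsto A_{0})$; a nonzero such homomorphism forces these two modules to share a composition factor, hence $A_{0}$ and $p^{-i}A_{0}'$ to share an eigenvalue. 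But the eigenvalues of $A_{0}$ have absolute value in $[1,p)$, while those of $p^{-i}A_{0}'$ have absolute value in $[p^{-i},p^{1-i})$, and these half-open annuli are disjoint for $i\neq0$. Hence $C_{i}=0$ for all $i\neq0$, so $C=C_{0}\in G(\mathbb{C})$, and the two gauge equations collapse to $A_{0}'=C_{0}^{-1}A_{0}C_{0}$ and $B_{0}'=C_{0}^{-1}B_{0}C_{0}$. (Note this half of the argument uses only the $p$-structure.)

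Combining the two halves yields the bijection $H^{1}(\Gamma,G(\mathbb{C}))^{p\text{-restricted}}\simeq H^{1}(\Gamma,G(\widehat{K}))$, and unwinding the cohomological language — a cocycle is a consistent pair, cohomologous means gauge-equivalent — gives the ``equivalently'' reformulation with no further work. The only delicate point, which is more bookkeeping than a genuine obstacle, is the passage from disjointness of spectra to the vanishing of $C_{i}$ when $A_{0}$ or $A_{0}'$ has nontrivial Jordan blocks; passing to the module-theoretic formulation above (a homomorphism between $\mathbb{C}[T]$-modules with disjoint spectra is zero) handles this cleanly, whereas a naive eigenvector computation would not suffice. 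One should also check the routine point that $\det C\in\mathbb{C}((z))^{\times}$, so $C$ has poles of bounded order and the coefficient comparison is legitimate.
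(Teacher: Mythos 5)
Your proposal is correct and follows essentially the same route as the paper: the surjectivity half is exactly the paper's preceding discussion (vanishing of the slopes via multiplicative independence, Proposition \ref{prop:slope 0 p-difference modules}, and the coefficient comparison $A_{0}M_{i}A_{0}^{-1}=p^{-i}M_{i}$ forcing $B_{0}$ to be constant). The injectivity half, which the paper leaves implicit, you handle with the same spectral-disjointness technique the paper itself uses for the constancy of $B_{0}$ and again in Lemma \ref{lem:uniqueness}, and your module-theoretic phrasing correctly deals with nontrivial Jordan blocks, so nothing is missing.
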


Symmetrically, we may assume that $B_{0}$ is $q$-restricted. In
general, however, we can not make $A_{0}$ $p$-restricted and $B_{0}$
$q$-restricted simultaneously.

\section{Proof of the main theorem}

In this section we deduce Theorem \ref{thm:Main} from Theorem \ref{thm:Periodicity theorem}.

\subsection{An approximation argument}

Let $K=K_{0}^{nr}=\bigcup K_{\Lambda}$ be as in the introduction.
Let $A,B\in G(K)$ satisfy the consistency condition $(\ref{eq:consistency_equation}).$
Let $\widehat{K}=\mathbb{C}((z))$ be the completion of $K$ at $0.$
By Theorem \ref{thm:Formal main theorem} there exists a $C\in G(\widehat{K})$
such that
\begin{equation}
A_{0}=C(z/p)^{-1}A(z)C(z),\,\,\,B_{0}=C(z/q)^{-1}B(z)C(z)\label{eq:twisted conjugation}
\end{equation}
are scalar matrices, and $A_{0}$ is $p$-restricted. Let $E\in G(K)$
be such that
\[
E^{-1}C\in I+z^{R}M_{r}(\mathbb{C}[[z]])
\]
where $R\ge1$ is a fixed large number, yet to be determined. Such
an $E$ exists since $G(K)$ is dense in $G(\widehat{K})$. Replacing
$(A,B)$ by the gauge-equivalent pair 
\[
(E(z/p)^{-1}A(z)E(z),E(z/q)^{-1}B(z)E(z))
\]
we may assume, without loss of generality, \emph{and without changing}
$A_{0}$ \emph{and} $B_{0}$, that $C(z)\in I+z^{R}M_{r}(\mathbb{C}[[z]]).$
In such a case, $A(z)$ and $B(z)$ are also holomorphic at $z=0$
and congruent to $A_{0}$ and $B_{0}$ modulo $z^{R}.$

The next lemma shows that if $C$ is congruent to $I$ modulo $z$,
then $C$ is uniquely determined.
\begin{lem}
\label{lem:uniqueness}Let $C,C'\in I+z^{R}M_{r}(\mathbb{C}[[z]])$
($R\ge1$) $A_{0},A'_{0}\in G(\mathbb{C})$ satisfy
\[
A_{0}=C(z/p)^{-1}A(z)C(z),\,\,\,A_{0}'=C'(z/p)^{-1}A(z)C'(z).
\]
Then $A_{0}=A'_{0}.$ If $p^{i}$ is not an eigenvalue of conjugation
by $A_{0}$ on $M_{r}(\mathbb{C})$ for $i\ge R$, then $C=C'.$ The
last condition holds when $R=1$ if $A_{0}$ is $p$-restricted.
\end{lem}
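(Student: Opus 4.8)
The plan is to compare $C$ and $C'$ directly by setting $D = C^{-1}C' \in I + z M_r(\mathbb{C}[[z]])$ and exploiting the functional equations. From $A_0 = C(z/p)^{-1}A(z)C(z)$ and $A_0' = C'(z/p)^{-1}A(z)C'(z)$ we eliminate $A(z)$: writing $A(z) = C(z/p)A_0 C(z)^{-1} = C'(z/p)A_0' C'(z)^{-1}$, we obtain
\[
D(z/p)\,A_0' = A_0\,D(z),\qquad D(z) := C(z)^{-1}C'(z).
\]
First I would read off $A_0 = A_0'$ by looking at the constant term: since $D(z) \equiv I \pmod{z}$, both sides reduce to $A_0'$ on the left and $A_0$ on the right at $z=0$, so $A_0 = A_0'$.

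With $A_0 = A_0'$ in hand, write $D(z) = I + \sum_{i \ge R} D_i z^i$ (the lowest possible order is $R$ because both $C,C' \in I + z^R M_r(\mathbb{C}[[z]])$, hence so is $C^{-1}C'$). Substituting into $D(z/p)A_0 = A_0 D(z)$ and comparing the coefficient of $z^i$ gives
\[
p^{-i} D_i A_0 = A_0 D_i, \qquad\text{i.e.}\qquad A_0 D_i A_0^{-1} = p^{i} D_i.
\]
Thus $D_i$ is an eigenvector (in $M_r(\mathbb{C})$) of the conjugation operator $X \mapsto A_0 X A_0^{-1}$ with eigenvalue $p^i$. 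By hypothesis $p^i$ is not such an eigenvalue for any $i \ge R$, so $D_i = 0$ for all $i \ge R$; hence $D = I$ and $C = C'$.

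It remains to check the last sentence: if $R = 1$ and $A_0$ is $p$-restricted (eigenvalues $c$ with $1 \le |c| < p$), then no $p^i$ with $i \ge 1$ is an eigenvalue of conjugation by $A_0$. The eigenvalues of $X \mapsto A_0 X A_0^{-1}$ are the ratios $c_j/c_k$ of eigenvalues of $A_0$, and $|c_j/c_k| < p/1 = p \le p^i$ for $i \ge 1$, so indeed $p^i$ cannot occur. This disposes of the claim.

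The only genuine subtlety — really a bookkeeping point rather than an obstacle — is justifying that $D(z) = C(z)^{-1}C'(z)$ lies in $I + z^R M_r(\mathbb{C}[[z]])$ and that term-by-term comparison of the power series identity $D(z/p)A_0 = A_0 D(z)$ is legitimate; both are immediate since everything is a formal power series convergent coefficient-wise, $C, C'$ are units in $M_r(\mathbb{C}[[z]])$, and substitution $z \mapsto z/p$ is a continuous ring automorphism of $\mathbb{C}[[z]]$. No analytic input is needed; the argument is purely formal.
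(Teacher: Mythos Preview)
Your argument is essentially identical to the paper's: set $D=C^{-1}C'$, derive $D(z/p)A_0'=A_0D(z)$, read off $A_0=A_0'$ from the constant term, and kill each $D_i$ by the eigenvalue hypothesis. One small slip: from $p^{-i}D_iA_0=A_0D_i$ you get $A_0D_iA_0^{-1}=p^{-i}D_i$, not $p^{i}D_i$; this is harmless because the eigenvalue set $\{c_j/c_k\}$ of conjugation is closed under reciprocals, so $p^i$ is excluded iff $p^{-i}$ is (and the paper, using $M\mapsto A_0^{-1}MA_0$, lands directly on $p^i$).
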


\begin{proof}
Write $C'=CD$. Then $D=I+\sum_{i=R}^{\infty}D_{i}z^{i}$ satisfies
\[
D(z/p)A'_{0}=A_{0}D(z).
\]
The constant term gives $A_{0}=A_{0}'$ and the higher terms give
$A_{0}^{-1}D_{i}A_{0}=p^{i}D_{i}.$ If $p^{i}$ ($i\ge R$) is not
an eigenvalue of conjugation by $A_{0}$, all the $D_{i}=0.$ If $A_{0}$
is $p$-restricted, then $p^{i}$ cannot be an eigenvalue of conjugation
by $A_{0}$ for $i\ne0.$ Indeed, any eigenvalue of the map $M\mapsto A_{0}^{-1}MA_{0}$
on $M_{r}(\mathbb{C})$ is the quotient of two eigenvalues of $A_{0},$
and these latter ones are all assumed to lie, in absolute value, in
the interval $[1,p).$
\end{proof}

\subsection{$C(z)$ is everywhere meromorphic\label{subsec:C_is_meromorphic}}
\begin{prop}
Suppose that $C(z)\in G(\widehat{K})$ satisfies (\ref{eq:twisted conjugation}).
Then $C(z)$ is meromorphic on $\mathbb{C}$$.$
\end{prop}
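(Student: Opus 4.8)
The plan is to start from the relation $A(z) = C(z/p)\,A_0\,C(z)^{-1}$, which we may rewrite as $C(z/p) = A(z)\,C(z)\,A_0^{-1}$. The hypothesis of the preceding subsection guarantees that $C(z)$ is holomorphic (indeed congruent to $I$ modulo $z^R$) in some neighborhood $D_\rho = \{|z| < \rho\}$ of the origin, and that $A(z) \in G(K)$ is a matrix of elliptic functions, hence meromorphic on all of $\mathbb{C}$; likewise $A(z)^{-1} = \det(A)^{-1}\mathrm{adj}(A)$ is meromorphic. The functional equation therefore propagates holomorphy (up to poles coming from $A^{\pm 1}$) from $D_\rho$ to the dilated disk $D_{p\rho}$: wherever $C(z)$ is defined and meromorphic on $D_\rho$, the formula $C(z/p) = A(z)C(z)A_0^{-1}$ defines a meromorphic continuation of $z \mapsto C(z/p)$ to $D_{p\rho}$, i.e. a meromorphic continuation of $C$ to $D_{p^2 \rho/p} = D_{p\rho}$. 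Wait — more precisely, if $w \in D_{p\rho}$, write $w = z$ with $z/p \in D_\rho$, so $C(z/p)$ is known; but we want $C$ at $w$, not at $w/p$. So I should instead iterate in the expanding direction: from $C$ meromorphic on $D_\rho$ and the rewritten equation $C(pz) = A(pz)^{-1}\,$? Let me restate cleanly below.

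First I would record the functional equation in the form that expands the domain. From $A_0 = C(z/p)^{-1}A(z)C(z)$ we get $C(z) = A(z)^{-1}C(z/p)A_0$. If $C$ is meromorphic on an open set $V$, then the right-hand side is meromorphic on $pV = \{pz : z \in V\}$ (since $A^{-1}$ is globally meromorphic and $z \mapsto C(z/p)$ is meromorphic on $pV$), and it agrees with $C$ on $V \cap pV$ wherever both sides are defined. Hence $C$ extends meromorphically to $V \cup pV$. Starting from $V = D_\rho$ and iterating, $C$ extends meromorphically to $\bigcup_{n \geq 0} p^n D_\rho = \bigcup_n D_{p^n \rho} = \mathbb{C}$, because $p \geq 2$ so $p^n\rho \to \infty$. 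The extensions are consistent on overlaps, so they glue to a single meromorphic function on $\mathbb{C}$. (One should note the extension is single-valued: $\mathbb{C}$ is simply connected and each step is an honest identity of meromorphic functions on a connected overlap $D_{p^n\rho}$, so no monodromy arises.)

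The one genuine subtlety — and the step I expect to need the most care — is verifying that the iterative extension does not accumulate poles pathologically, i.e. that the result is a bona fide meromorphic function (finitely many poles in every compact set, each of finite order) rather than something with an essential singularity or a non-discrete pole set. This is handled by the observation that at stage $n$ the poles of the extension to $D_{p^n\rho}$ can only occur at poles of $A(z)^{-1} = A(z)$'s adjugate-over-determinant on that disk, or at $p$ times poles already present at stage $n-1$; since $A$ has only finitely many poles in any compact set (being a matrix of elliptic functions, hence meromorphic), an easy induction shows $C$ has only finitely many poles of bounded order in each $D_{p^n\rho}$. I would also remark that although the argument only uses the $A$-equation, one could symmetrically use the $B$-equation; either suffices, and consistency of the two is already built into $(A,B)$ being a consistent pair. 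This completes the proof that $C$ is meromorphic on $\mathbb{C}$.
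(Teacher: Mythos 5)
There is a genuine gap, and it sits in your very first step: you assert that ``the hypothesis of the preceding subsection guarantees that $C(z)$ is holomorphic \ldots{} in some neighborhood $D_\rho$ of the origin.'' It does not. The hypothesis is only that $C(z)\in G(\widehat{K})$ with $\widehat{K}=\mathbb{C}((z))$, i.e.\ $C$ is a matrix of \emph{formal} Laurent series; the normalization $C\equiv I\bmod z^{R}$ is likewise a purely formal statement about coefficients and carries no convergence information. (The preceding subsection does conclude that $A$ and $B$ are holomorphic at $0$, but that is because they were already known to be globally meromorphic, being elliptic; no such prior knowledge is available for $C$.) The entire analytic content of the proposition is precisely to show that this formal series converges on some disc $\{|z|<\varepsilon\}$. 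The paper does this by writing
\[
C^{(m)}(z)=A(z)^{-1}A(z/p)^{-1}\cdots A(z/p^{m-1})^{-1}A_{0}^{m},
\]
establishing the norm estimate $\|M_{i}^{(m+1)}\|\le p^{-im}c_{1}^{m}c_{2}^{i}$ for the Taylor coefficients of the factors, choosing $R$ large enough that $c_{1}/p^{R}\le 1/2$ so that the infinite product converges to a holomorphic $C^{(\infty)}$ near $0$, and then invoking the uniqueness lemma (Lemma \ref{lem:uniqueness}) to identify $C^{(\infty)}$ with $C$. This is the step your argument has nothing to replace, and without it your propagation has nothing to propagate.

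The second half of your argument --- iterating $C(z)=A(z)^{-1}C(z/p)A_{0}$ to extend meromorphy from $D_{\rho}$ to $\bigcup_{n}D_{p^{n}\rho}=\mathbb{C}$, with poles controlled by those of $A^{-1}$ and the dilates of earlier poles --- is correct and is exactly the (one-sentence) reduction the paper makes. But it is the easy half. To repair the proof you must supply the convergence argument; note also that the freedom to enlarge $R$ (by replacing $(A,B)$ with a gauge-equivalent pair, which changes $c_{2}$ but not $A_{0}$ and hence not $c_{1}$) is what makes the geometric estimate close, so the choice of $R$ in the approximation step and the convergence proof cannot be decoupled.
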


\begin{proof}
As noted above, we may assume that $C(z)\equiv I\mod z^{R}$ where
$R\ge1$ is chosen as in Lemma \ref{lem:uniqueness}. The equation
\[
C(z)=A(z)^{-1}C(z/p)A_{0}
\]
and the fact that $A(z)$ is meromorphic, show that it is enough to
prove that $C(z)=I+\sum_{i=R}^{\infty}z^{i}C_{i}$ converges in $\{z|\,|z|<\varepsilon\}$
for \emph{some} $\varepsilon>0$.

For this fix a norm $||.||$ on $M_{r}(\mathbb{C})$ (they are all
equivalent) and let $c_{1}>0$ be such that
\[
||A_{0}^{-1}MA_{0}||\le c_{1}||M||.
\]
Writing $A(z)^{-1}A_{0}=I+\sum_{i=R}^{\infty}z^{i}M_{i},$ the holomorphicity
of $A(z)$ in a neighborhood of $0$ implies that there exists a $c_{2}>0$
so that $||M_{i}||\le c_{2}^{i}.$ For $m\ge1$ define
\[
A^{(m)}(z)=A_{0}^{1-m}A(z/p^{m-1})^{-1}A_{0}^{m}
\]
(so that $A^{(1)}(z)=A(z)^{-1}A_{0}$) and
\[
C^{(m)}(z)=A^{(1)}(z)A^{(2)}(z)\cdots A^{(m)}(z)=A(z)^{-1}A(z/p)^{-1}\cdots A(z/p^{m-1})^{-1}A_{0}^{m}.
\]
Note that
\[
C^{(m+1)}(z)=A(z)^{-1}C^{(m)}(z/p)A_{0}.
\]
Suppose we show that $C^{(m)}(z)$ converges to some $C^{(\infty)}(z)$
which is holomorphic in a neighborhood of $0$. Then $C^{(\infty)}(z)$
satisfies (\ref{eq:twisted conjugation}), and is congruent to $I$
modulo $z^{R},$ so by Lemma \ref{lem:uniqueness} it is equal to
$C$ and the proposition will be proved.

Writing
\[
A^{(m)}(z)=I+\sum_{i=R}^{\infty}z^{i}M_{i}^{(m)}
\]
we have
\[
||M_{i}^{(m+1)}||=p^{-im}||A_{0}^{-m}M_{i}A_{0}^{m}||\le p^{-im}c_{1}^{m}c_{2}^{i}.
\]
Choose $R$ large enough so that $c_{1}/p^{R}\le1/2$. Note that the
reduction step that allowed us to take a large $R$ did not affect
$A_{0}$, so did not affect $c_{1}$ (it did affect $A(z),$ hence
$c_{2}$). These estimates immediately give the existence of the limit
$C^{(\infty)}$ and its convergence in the neighborhood $|z|<c_{2}^{-1}$
of $0$. This completes the proof of the proposition.
\end{proof}

\subsection{\label{subsec:Associating a vector bundle}Applying the periodicity
theorem to get a vector bundle on an elliptic curve}

Let $\mathscr{G}=G(\mathscr{M}),$ $\mathscr{H}=G(\mathscr{O})$ and
$\mathscr{F}=\mathscr{G}/\mathscr{H}$ be as in example \ref{exa:key example}.
As we have seen, we may assume that $C\in G(\mathcal{M})$ ($\mathcal{M}$
is the field of meromorphic functions on $\mathbb{C})$ is normalized
at 0 by
\[
C\equiv I\mod z
\]
and that $A_{0}$ is $p$-restricted. Lemma \ref{lem:uniqueness}
shows that $C$ is then uniquely determined by $(A,B).$ Replacing
$(A,B)$ by the gauge equivalent pair
\[
(E(z/p)^{-1}A(z)E(z),\,E(z/q)^{-1}B(z)E(z))
\]
for some $E\in G(K)$, results in multiplying $C$ on the left by
$E^{-1}$, leaving $(A_{0},B_{0})$ unchanged. Thus up to multiplication
on the left by a matrix from $G(K)$, $C$ depends only on the module
$M$. 

The matrix $C$ is a global section $C\in\Gamma(V,\mathscr{G})$ and
we let $\overline{C}$ be its image in $\Gamma(V,\mathscr{F}).$ The
equation (\ref{eq:twisted conjugation}) yields
\begin{equation}
m_{p}^{*}\overline{C}(z)=\overline{C}(z/p)=A(z)\overline{C}(z),\,\,\,m_{q}^{*}\overline{C}(z)=\overline{C}(z/q)=B(z)\overline{C}(z).\label{eq:cocycle for s}
\end{equation}
Theorem \ref{thm:Periodicity theorem} implies that there exists a
\emph{modification of $\overline{C}$ at $0$, }denoted $s\in\Gamma(V,\mathscr{F}),$
which is $\Lambda$-periodic for some lattice $\Lambda\subset\Lambda_{0}.$
We may assume that $A$ and $B$ are $\Lambda$-periodic as well.
Furthermore, this $s$ satisfies
\[
m_{p}^{*}s=As,\,\,\,m_{q}^{*}s=Bs.
\]

By the periodicity, $s$ is an element of
\[
\Gamma_{\Lambda}(\mathbb{C},\mathscr{F})=G(\mathbb{A}_{\Lambda})/G(\mathbb{O}_{\Lambda}).
\]
Replacing $C$ by $E^{-1}C$ for $E\in G(K_{\Lambda})$, hence $s$
by $E^{-1}s,$ does not change the class $[s]$ of $s$ in 
\[
Bun_{r}(X_{\Lambda})=G(K_{\Lambda})\setminus G(\mathbb{A}_{\Lambda})/G(\mathbb{O}_{\Lambda}).
\]
This class is therefore an invariant of the module $M$. By definition,
$(m_{p^{-1}}^{*}s)_{x}=\varphi_{p^{-1}}(s_{px}).$ Thus if $s$ in
$\Lambda$-periodic, $m_{p^{-1}}^{*}s$ is also $\Lambda$-periodic. 

Let $\mathcal{E}=\mathcal{E}(s)$ be the vector bundle associated
with our $s$, the $\Lambda$-periodic modification of $\overline{C}.$
From the equation $m_{p}^{*}s=As$ we conclude that $m_{p^{-1}}^{*}s=(m_{p^{-1}}^{*}A)^{-1}s.$
But $m_{p^{-1}}^{*}A^{-1}(z)=A^{-1}(pz)$ is in $G(K_{\Lambda}).$
Thus the classes of $s$ and $m_{p^{-1}}^{*}s$ in $Bun_{r}(X_{\Lambda})$
are the same, hence, by Lemma \ref{lem:pull-back by m_p},
\[
p_{\Lambda}^{*}\mathcal{E}\simeq\mathcal{E}.
\]

Replacing $\Lambda$ by a sublattice, we may assume, by Corollary
\ref{cor:Direct sum of F_r}, that 
\[
\mathcal{E}\simeq\bigoplus_{i=1}^{k}\mathcal{F}_{r_{i}}
\]
where $r_{1}\le\cdots\le r_{k}$ and $\sum_{i=1}^{k}r_{i}=r.$ Since
$[s]\in Bun_{r}(X_{\Lambda})$, hence also the isomorphism type of
$\mathcal{E}=\mathcal{E}(s)$, depend only on $M$, we can make the
following definition.
\begin{defn}
The partition $(r_{1},\dots,r_{k})$ of $r$ is called the \emph{type}
of $M.$
\end{defn}

From Lemma \ref{lem:canonical unipotent matrix} we conclude that
the double coset of $s$ in $Bun_{r}(X)$ and the double coset of
$\overline{U},$ where $U$ is the matrix (in block form)
\begin{equation}
U(z)=\oplus_{i=1}^{k}U_{r_{i}}(pq,z_{0};z)\label{eq:U(z)}
\end{equation}
are the same. We deduce the following.
\begin{cor}
Let $C$ be the invertible $r\times r$ matrix of everywhere meromorphic
functions obtained in §\ref{subsec:C_is_meromorphic}. Then, possibly
after replacing $C$ by $E^{-1}C$ ($E\in G(K)$), and the pair $(A,B)$
by a gauge-equivalent pair, we may assume that
\[
C(z)=U(z)D(z)
\]
where $U$ is the upper triangular unipotent matrix described in $(\ref{eq:U(z)})$
and $D$ is an invertible matrix of holomorphic functions, with holomorphic
inverse, except possibly at $0$.
\end{cor}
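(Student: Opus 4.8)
The plan is to read the factorization $C=UD$ directly off the dictionary between $\Lambda$-periodic sections of $\mathscr{F}$, points of $Bun_{r}(X_{\Lambda})$, and vector bundles on $X_{\Lambda}$, together with the uniqueness of the periodic modification of $\overline{C}$ at $0$ provided by Theorem~\ref{thm:Periodicity theorem}. First I would recall the output of section~\ref{subsec:Associating a vector bundle}: after shrinking $\Lambda$ by Corollary~\ref{cor:Direct sum of F_r} we have the $\Lambda$-periodic section $s\in\Gamma_{\Lambda}(\mathbb{C},\mathscr{F})$, which is the modification of $\overline{C}$ at $0$, with $\mathcal{E}(s)\simeq\bigoplus_{i=1}^{k}\mathcal{F}_{r_{i}}$. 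On the other hand, for the matrix $U(z)=\oplus_{i}U_{r_{i}}(pq,z_{0};z)$ of (\ref{eq:U(z)}), Lemma~\ref{lem:canonical unipotent matrix} gives $\overline{U}\in\Gamma_{\Lambda}(\mathbb{C},\mathscr{F})$ with $\mathcal{E}(\overline{U})\simeq\bigoplus_{i}\mathcal{F}_{r_{i}}$ as well. Since $\mathcal{E}\mapsto\beta(\mathcal{E})$ is a bijection onto $Bun_{r}(X_{\Lambda})$, we conclude $[s]=[\overline{U}]$ in $Bun_{r}(X_{\Lambda})=G(K_{\Lambda})\backslash G(\mathbb{A}_{\Lambda})/G(\mathbb{O}_{\Lambda})$.

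Next I would upgrade this equality of double cosets to an equality of sections after a left twist. By definition of $Bun_{r}$, $[s]=[\overline{U}]$ means $s=E\cdot\overline{U}$ in $G(\mathbb{A}_{\Lambda})/G(\mathbb{O}_{\Lambda})=\Gamma_{\Lambda}(\mathbb{C},\mathscr{F})$ for some $E\in G(K_{\Lambda})\subset G(K)$. Replace $(A,B)$ by the gauge-equivalent pair $(E(z/p)^{-1}A(z)E(z),\,E(z/q)^{-1}B(z)E(z))$; as observed just before the statement, this replaces $C$ by $E^{-1}C$. Now $E$ is a matrix of $\Lambda$-elliptic functions, invertible in $G(\mathscr{M}_{x})$ at every $x$, so $E^{-1}s$ is again $\Lambda$-periodic and agrees with $\overline{E^{-1}C}$ away from $0$; being periodic it is, by the uniqueness statement in Theorem~\ref{thm:Periodicity theorem}, the unique periodic modification of $\overline{E^{-1}C}$ at $0$. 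Hence, after this replacement (and renaming), we may assume $s=\overline{U}$.

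Finally I would extract the factorization. We now have $\overline{C}_{x}=\overline{U}_{x}$ in $\mathscr{F}_{x}=G(\mathscr{M}_{x})/G(\mathscr{O}_{x})$ for every $x\neq0$, i.e. $U_{x}^{-1}C_{x}\in G(\mathscr{O}_{x})$. Set $D=U^{-1}C$. Since $N_{r}$ is nilpotent, $U$ is a polynomial in $\zeta(pqz-z_{0},\Lambda)$, hence lies in $G(\mathcal{M})$, and $C\in G(\mathcal{M})$ by the proposition of section~\ref{subsec:C_is_meromorphic}; therefore $D\in G(\mathcal{M})$, and the condition $D_{x}\in G(\mathscr{O}_{x})$ for all $x\neq0$ says precisely that $D$ and $D^{-1}$ are holomorphic away from $0$. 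Then $C=UD$ is the asserted factorization. I do not expect a genuine obstacle here, since the substance has been absorbed into Theorem~\ref{thm:Periodicity theorem}, Lemma~\ref{lem:canonical unipotent matrix}, and the meromorphy of $C$; the one point needing care is the bookkeeping in the middle paragraph, namely that the passage from the double-coset identity $[s]=[\overline{U}]$ to $s=\overline{U}$ is legitimate, i.e. that left multiplication by $E\in G(K)$ is simultaneously compatible with a genuine gauge transformation of $(A,B)$ and with the uniqueness of the modification of $\overline{C}$ at $0$.
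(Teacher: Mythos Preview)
Your proposal is correct and follows essentially the same route as the paper: from $[s]=[\overline{U}]$ in $Bun_{r}(X_{\Lambda})$ you pick $E\in G(K_{\Lambda})$ with $E\overline{U}=s$, replace $C$ by $E^{-1}C$ (and $(A,B)$ accordingly) so that $s=\overline{U}$, and then set $D=U^{-1}C$, which lies in $G(\mathscr{O}_{x})$ for all $x\neq0$. Your extra care in checking that $E^{-1}s$ remains the periodic modification of $\overline{E^{-1}C}$ at $0$ is sound, though the paper simply takes this replacement for granted.
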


\begin{proof}
Since $[s]=[\overline{U}]\in G(K_{\Lambda})\setminus G(\mathbb{A}_{\Lambda})/G(\mathbb{O}_{\Lambda})$,
there exists an $E\in G(K_{\Lambda})$ such that $E\overline{U}=s.$
Replacing $C$ by $E^{-1}C$, hence $s$ by $E^{-1}s,$ we may assume
that $\overline{U}=s$ in $G(\mathbb{A}_{\Lambda})/G(\mathbb{O}_{\Lambda}).$
Define $D$ by the equation $C=UD.$ Then at any $0\ne x\in\mathbb{C}$
we have $\overline{U}_{x}=s_{x}=\overline{C}_{x}$ in $\mathscr{F}_{x}=\mathscr{G}_{x}/\mathscr{H}_{x}.$
It follows that $D_{x}\in\mathscr{H}_{x}=G(O_{x})$ for every $0\ne x.$ 
\end{proof}
We emphasize that although the change in $C$ (to $E^{-1}C$) may
introduce poles at points of $\Lambda$, this change, accompanied
by the corresponding gauge equivalence of $(A,B)$, \emph{does not
change} $A_{0}$ and $B_{0}$.

Rewrite the first of the two functional equations
\[
\begin{cases}
\begin{array}{c}
A_{0}=C(z/p)^{-1}A(z)C(z)\in G(\mathbb{C})\\
B_{0}=C(z/q)^{-1}B(z)C(z)\in G(\mathbb{C}),
\end{array}\end{cases}
\]
as

\begin{equation}
A(z)=U(z/p)T(z)U(z)^{-1},\label{eq:U,A_equation}
\end{equation}
where $T(z)=D(z/p)A_{0}D(z)^{-1}$ is everywhere holomorphic (meaning
that its inverse is also holomorphic, i.e. its germ lies in $\mathscr{H}_{x}=G(O_{x})$),
except possibly at 0. Similarly, with the same $U(z)$, and with $S(z)=D(z/q)B_{0}D(z)^{-1},$
\begin{equation}
B(z)=U(z/q)S(z)U(z)^{-1}\label{eq:U,B_equation}
\end{equation}
and $S(z)$ is everywhere holomorphic, except possibly at 0.

At last, we get rid of the phrase ``except possibly at 0'', forced
upon us, so far, since the Periodicity Theorem had the freedom of
modification at 0. Recall that the parameter $z_{0}\in\mathbb{C}$
introduced in (\ref{eq:U(z)}), see also Lemma \ref{lem:canonical unipotent matrix},
is still at our disposal. By an appropriate choice of $z_{0}$ we
may assume that $U$ is holomorphic at any $\omega\in p^{-1}\Lambda$
and any $\omega\in q^{-1}\Lambda.$ So are $T$ and $S$ if $\omega\ne0$.
This means that $A$ and $B$ are holomorphic at any $0\ne\omega\in\Lambda.$
Being $\Lambda$-periodic, $A$ and $B$ must be holomorphic at $0$
as well, hence $T_{0},S_{0}\in\mathscr{H}_{0}$. Since
\[
\zeta(pqz-z_{0},\Lambda)-\zeta(pqz-z_{1},\Lambda)\in K_{\Lambda},
\]
changing $z_{0}$ results in replacing the pair $(A,B)$ in equations
$($$\ref{eq:U,A_equation}$,\ref{eq:U,B_equation}) by a gauge-equivalent
pair, but $T$ and $S$ are unchanged. We therefore conclude that,
no matter what $z_{0}$ is, $T(z)$ and $S(z)$ are everywhere holomorphic.

We record our intermediate conclusion.
\begin{cor}
\label{cor:Intermediate step}Let $M$ be an elliptic $(p,q)$-difference
module, and $(r_{1},\dots,r_{k})$ its type. Then there exists a lattice
$\Lambda\subset\Lambda_{0}$ such that the module $M$ is represented,
in an appropriate basis, by $\Lambda$-periodic matrices $A$ and
$B$ of the form
\[
A(z)=U(z/p)T(z)U(z)^{-1},\,\,\,B(z)=U(z/q)S(z)U(z)^{-1}
\]
where: (i) $U=\oplus_{i=1}^{k}U_{r_{i}}(pq,z_{0};z)$ (ii) The matrices
$T(z)$ and $S(z)$ are everywhere holomorphic with a holomorphic
inverse.
\end{cor}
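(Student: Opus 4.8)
The plan is to assemble into a single normalized presentation the ingredients developed in this subsection, feeding the output of the Periodicity Theorem through Atiyah's classification. I would start from an arbitrary consistent pair $(A,B)\in G(K)^2$ representing $M$ and apply Theorem \ref{thm:Formal main theorem} to produce $C\in G(\widehat{K})$ with $A_0=C(z/p)^{-1}A(z)C(z)$ and $B_0=C(z/q)^{-1}B(z)C(z)$ scalar, $A_0$ being $p$-restricted. Using the density of $K$ in $\widehat{K}$ together with the uniqueness in Lemma \ref{lem:uniqueness}, I would replace $(A,B)$ by a $K$-gauge-equivalent pair so that $C\equiv I\bmod z$ and $A,B$ are holomorphic at the origin, and then invoke the proposition of \S\ref{subsec:C_is_meromorphic} to conclude that $C$ extends to an everywhere meromorphic matrix, $C\in G(\mathcal{M})$.

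Next I would pass to the image $\overline{C}\in\Gamma(\mathbb{C},\mathscr{F})$, which by (\ref{eq:cocycle for s}) satisfies $m_p^*\overline{C}=A\overline{C}$ and $m_q^*\overline{C}=B\overline{C}$. Since $p,q$ are coprime, the Periodicity Theorem \ref{thm:Periodicity theorem} applies: after a modification at $0$ I obtain $s\in\Gamma_\Lambda(\mathbb{C},\mathscr{F})$ for a suitable $\Lambda\subset\Lambda_0$ (chosen small enough that $A,B$ are also $\Lambda$-periodic), still satisfying $m_p^*s=As$, $m_q^*s=Bs$. Viewing $s\in G(\mathbb{A}_\Lambda)/G(\mathbb{O}_\Lambda)$, the identity $m_{p^{-1}}^*s=(m_{p^{-1}}^*A)^{-1}s$ with $(m_{p^{-1}}^*A)^{-1}(z)=A^{-1}(pz)\in G(K_\Lambda)$ shows $[s]=[m_{p^{-1}}^*s]$ in $Bun_r(X_\Lambda)$, so by Lemma \ref{lem:pull-back by m_p} the associated bundle $\mathcal{E}(s)$ satisfies $p_\Lambda^*\mathcal{E}(s)\simeq\mathcal{E}(s)$. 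Corollary \ref{cor:Direct sum of F_r} then lets me shrink $\Lambda$ so that $\mathcal{E}(s)\simeq\bigoplus_{i=1}^k\mathcal{F}_{r_i}$ — this is the type $(r_1,\dots,r_k)$ — and Lemma \ref{lem:canonical unipotent matrix} identifies $[s]=[\overline{U}]$ for $U=\oplus_i U_{r_i}(pq,z_0;z)$, for any auxiliary $z_0$.

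With $[s]=[\overline{U}]$ in hand, I would pick $E\in G(K_\Lambda)$ with $s=E\overline{U}$, replace $C$ by $E^{-1}C$ (this modifies $(A,B)$ only by a further $K$-gauge equivalence and leaves $A_0,B_0$ intact), and factor $C=UD$; then $\overline{U}=s$ forces $D$ to be holomorphic with holomorphic inverse away from $0$. Substituting into the defining equations yields $A(z)=U(z/p)T(z)U(z)^{-1}$ and $B(z)=U(z/q)S(z)U(z)^{-1}$ with $T=D(z/p)A_0D(z)^{-1}$ and $S=D(z/q)B_0D(z)^{-1}$, each holomorphic with holomorphic inverse except possibly at $0$. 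To remove the phrase ``except possibly at $0$'' I would exploit the still-free parameter $z_0$, choosing it so that $U$ is holomorphic at all points of $p^{-1}\Lambda\cup q^{-1}\Lambda$; then $A$ and $B$ are holomorphic at every nonzero point of $\Lambda$, hence by $\Lambda$-periodicity also at $0$, so $T_0,S_0\in\mathscr{H}_0$. Since changing $z_0$ alters $(A,B)$ by a $K_\Lambda$-gauge equivalence while leaving $T$ and $S$ unchanged, $T$ and $S$ are everywhere holomorphic with holomorphic inverse.

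Since the Periodicity Theorem and Atiyah's classification carry the real weight, the only genuinely delicate point is bookkeeping: I must verify that each successive normalization — the $K$-approximation of $C$, the left multiplication by $E\in G(K_\Lambda)$, and the adjustment of $z_0$ — disturbs neither the scalar matrices $A_0,B_0$ nor the gauge-equivalence class of $M$, so that the final $(A,B)$ still represents the same module; and that the modification at $0$ allowed by Theorem \ref{thm:Periodicity theorem} is ultimately harmless precisely because periodicity forces holomorphy at $0$ once $z_0$ is chosen correctly. This last step, reconciling the freedom of modification at $0$ with the required holomorphy there, is the part I expect to require the most care.
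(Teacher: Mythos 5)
Your proposal is correct and follows essentially the same route as the paper: formal normalization via Theorem \ref{thm:Formal main theorem} and Lemma \ref{lem:uniqueness}, meromorphic continuation of $C$, the Periodicity Theorem applied to $\overline{C}$, identification of $[s]$ with $[\overline{U}]$ through Corollary \ref{cor:Direct sum of F_r} and Lemma \ref{lem:canonical unipotent matrix}, the factorization $C=UD$, and finally the choice of $z_{0}$ to eliminate the exceptional behaviour at the origin. The bookkeeping points you flag (that each normalization preserves $A_{0},B_{0}$ and the isomorphism class of $M$, and that periodicity forces holomorphy at $0$) are exactly the ones the paper checks.
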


\subsection{Two extreme cases\label{subsec:Two extreme cases}}

If $U=I$ (i.e. the vector bundle $\mathcal{E}$ is trivial) then
$(\ref{eq:U,A_equation})$ shows that $A$, being a matrix of elliptic
functions which are at the same time everywhere holomorphic, is constant.
Similarly $B$ is constant. We draw the following conclusion.
\begin{prop}
Assume that $\mathcal{E}$ is trivial (i.e. the type of $M$ is $(1,1,\dots,1)$).
Then the elliptic $(p,q)$-difference module $M$ represented by the
pair $(A,B)$ is obtained by base change from a scalar one. Equivalently,
the pair $(A,B)$ is gauge-equivalent to a pair $(A_{0},B_{0})$ of
commuting matrices from $G(\mathbb{C}).$
\end{prop}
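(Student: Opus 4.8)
The plan is to use Corollary \ref{cor:Intermediate step} together with the trivial observation that, when the type of $M$ is $(1,1,\dots,1)$, the matrix $U$ appearing there is the identity: each block is $U_{r_i}(pq,z_0;z)=\exp(\zeta(pqz-z_0,\Lambda)N_1)$ with $N_1$ the $1\times1$ zero matrix, so each $U_{r_i}=1$ and hence $U=I$.

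First I would invoke Corollary \ref{cor:Intermediate step} to replace $(A,B)$ by a gauge-equivalent pair over $K$ of the form $A(z)=U(z/p)T(z)U(z)^{-1}$, $B(z)=U(z/q)S(z)U(z)^{-1}$, with $A,B$ being $\Lambda$-periodic for some $\Lambda\subset\Lambda_0$ and with $T,S$ everywhere holomorphic together with their inverses. Setting $U=I$, equations $(\ref{eq:U,A_equation})$ and $(\ref{eq:U,B_equation})$ collapse to $A(z)=T(z)$ and $B(z)=S(z)$. Thus every entry of $A$ and of $B$ is simultaneously an entire function on $\mathbb{C}$ and a $\Lambda$-periodic function; such a function is bounded on a compact fundamental parallelogram of $\Lambda$, hence bounded everywhere, hence constant by Liouville's theorem. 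Therefore $A=A_0$ and $B=B_0$ lie in $G(\mathbb{C})$.

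Next I would read off commutativity: for constant matrices the consistency condition $(\ref{eq:consistency_equation})$ reads $B_0A_0=A_0B_0$. So in the chosen basis the pair representing $M$ is already a commuting pair of scalar matrices, i.e.\ the class of $(A,B)$ in $H^1(\Gamma,G(K))$ is the image, under the natural map, of the class of $(A_0,B_0)$ in $H^1(\Gamma,G(\mathbb{C}))$, which is the desired conclusion. Since the basis change of Corollary \ref{cor:Intermediate step} was realized by a gauge transformation with matrix in $G(K)$, the original $(A,B)$ is gauge-equivalent \emph{over $K$}, and not merely over $\widehat{K}$, to $(A_0,B_0)$.

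I do not expect a real obstacle: the whole content is already in Corollary \ref{cor:Intermediate step}, and what is left is the elementary fact that an entire elliptic function is constant. The one point that deserves attention is bookkeeping --- one must be sure that all reductions leading to Corollary \ref{cor:Intermediate step} (the approximation step and the subsequent replacement of $C$ by $E^{-1}C$ with $E\in G(K_\Lambda)$) were performed by gauge transformations defined over $K$, so that the $\mathbb{C}$-structure produced here is obtained by base change from $K$ itself and not merely from its completion. This has already been checked in the preceding subsections, so the proposition follows at once.
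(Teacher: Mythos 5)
Your proposal is correct and is essentially identical to the paper's own argument: with $U=I$, Corollary \ref{cor:Intermediate step} makes $A=T$ and $B=S$ simultaneously elliptic and everywhere holomorphic, hence constant by Liouville, and the consistency condition then forces $A_0B_0=B_0A_0$. The bookkeeping point you flag (that all reductions are gauge transformations over $K$) is indeed handled in the preceding subsections, so nothing further is needed.
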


This proves, in particular, the case $r=1$ of the main theorem (Theorem
\ref{thm:Main}), proved already in \cite{dS1}.

Assume, at the other extreme, that $\mathcal{E}\simeq\mathcal{F}_{r}$
is indecomposable (i.e. the type of $M$ is $(r)$), so that $U(z)=U_{r}(pq,z_{0};z).$
Write $U(z),$ $T(z)$ and $A(z)$ in block form where

\[
T(z)=\left(\begin{array}{cc}
T'(z) & \beta(z)\\
\gamma(z) & \delta(z)
\end{array}\right),\,\,\,\,A(z)=\left(\begin{array}{cc}
A'(z) & b(z)\\
c(z) & d(z)
\end{array}\right),
\]
$T'$ and $A'$ are of size $(r-1)\times(r-1),$ $\gamma$ and $\beta$
are row/column vectors consisting of everywhere holomorphic functions,
$c$ and $b$ are similar vectors of elliptic functions, $\delta$
is holomorphic and $d$ is elliptic. We get
\[
U(z/p)\left(\begin{array}{cc}
T'(z) & \beta(z)\\
\gamma(z) & \delta(z)
\end{array}\right)=\left(\begin{array}{cc}
A'(z) & b(z)\\
c(z) & d(z)
\end{array}\right)U(z).
\]

\begin{lem}
\label{lem:Last row}$\gamma=c=0$ and $\delta=d$ is a constant.
\end{lem}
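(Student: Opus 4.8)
The plan is to read off the last (block) row of the displayed matrix identity and exploit the rigid shape of $U$. Since $U(z)=U_{r}(pq,z_{0};z)=\exp(\zeta(pqz-z_{0},\Lambda)N_{r})$ is upper-triangular unipotent, so is $U(z/p)=\exp(\zeta(qz-z_{0},\Lambda)N_{r})$; in particular its bottom row is the row vector $e_{r}=(0,\dots,0,1)$, so the bottom row of the left-hand side is just the last row $(\gamma(z)\mid\delta(z))$ of $T(z)$. Writing $U(z)$ in $(r-1)+1$ block form as $\begin{pmatrix}U_{r-1}(z)&u(z)\\0&1\end{pmatrix}$, where $U_{r-1}(z)=\exp(\zeta(pqz-z_{0},\Lambda)N_{r-1})$ has $(i,j)$-entry $\zeta(pqz-z_{0},\Lambda)^{j-i}/(j-i)!$ for $i\le j$, and $u(z)$ is the column whose bottom entry is $\zeta(pqz-z_{0},\Lambda)$ (and whose higher entries are higher powers of $\zeta(pqz-z_{0},\Lambda)$ divided by factorials), the bottom row of the right-hand side $A(z)U(z)$ equals $(c(z)U_{r-1}(z)\mid c(z)u(z)+d(z))$. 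Comparing entries yields the two identities
\[
\gamma(z)=c(z)U_{r-1}(z),\qquad\delta(z)=c(z)u(z)+d(z).
\]

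First I would extract from $\gamma=cU_{r-1}$ that $c$ is a constant multiple of $e_{r-1}$. Componentwise the first identity reads $\gamma_{j}=\sum_{i\le j}c_{i}\,\zeta(pqz-z_{0},\Lambda)^{j-i}/(j-i)!$, so $\gamma_{1}=c_{1}$, which, being at once $\Lambda$-elliptic (an entry of $c$) and holomorphic on all of $\mathbb{C}$ (an entry of $\gamma$), is a constant. Inductively, if $c_{1}=\dots=c_{j-1}=0$ then $\gamma_{j}=c_{j}$ is again a constant $\lambda_{j}$, and for $j\le r-2$ the $(j+1)$-st component gives $c_{j+1}(z)=\gamma_{j+1}(z)-\lambda_{j}\,\zeta(pqz-z_{0},\Lambda)$, an elliptic function that (because $\gamma_{j+1}$ is holomorphic) has only simple poles, located at the $(pq)^{2}$ points of $(pq)^{-1}(z_{0}+\Lambda)$ modulo $\Lambda$, each with residue $-\lambda_{j}/(pq)$. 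The residue theorem on $X_{\Lambda}$ forces the sum $-pq\,\lambda_{j}$ to vanish, hence $\lambda_{j}=0$, i.e. $c_{j}=\gamma_{j}=0$. Running this through $j=1,\dots,r-2$ leaves $c=\gamma=\lambda\,e_{r-1}$ for a single constant $\lambda=\gamma_{r-1}$.

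Then I would kill the remaining scalar using the second identity. With $c=\lambda e_{r-1}$ one has $c(z)u(z)=\lambda\,\zeta(pqz-z_{0},\Lambda)$, so $d(z)=\delta(z)-\lambda\,\zeta(pqz-z_{0},\Lambda)$. The same residue count, now applied to the $\Lambda$-elliptic function $d$, forces $\lambda=0$. Therefore $c=\gamma=0$, and $d=\delta$ is both $\Lambda$-elliptic and everywhere holomorphic (an entry of $T$, cf. Corollary \ref{cor:Intermediate step}), hence constant.

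The point that needs care — and the reason no one-line argument works — is that the first identity $\gamma=cU_{r-1}$ alone does \emph{not} give $c=0$: it confines $c$ only to $\mathbb{C}\,e_{r-1}$, the surviving degree of freedom coming precisely from the Weierstrass $\zeta$-function sitting in the bottom entry of $u$. It is the companion identity $\delta=cu+d$ that removes this last freedom, via the residue bookkeeping. In effect the lemma is the $r=2$ computation of Lemma \ref{lem:canonical unipotent matrix} carried one step further, and the same mechanism is what will drive the subsequent induction on $r$ applied to the $(r-1)\times(r-1)$ blocks $T'$, $A'$.
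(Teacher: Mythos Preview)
Your proof is correct and follows essentially the same route as the paper's own argument: both read off the bottom row of the displayed identity, use that a simultaneously holomorphic and $\Lambda$-elliptic function is constant to get $\gamma_{j}=c_{j}$, and then apply the residue theorem on $X_{\Lambda}$ to the next equation $\gamma_{j+1}=c_{j}\zeta+c_{j+1}$ (with $\delta$ and $d$ playing the role of $\gamma_{r}$ and $c_{r}$ at the last step) to force the constant $c_{j}$ to vanish. Your explicit residue count and the remark about why the first identity alone leaves one surviving degree of freedom are nice clarifications, but the underlying mechanism is identical to the paper's.
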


\begin{proof}
Recall that $U(z)$ is upper triangular unipotent, and $u_{i,i+1}(z)=\zeta(pqz-z_{0},\Lambda).$
We prove by induction on $i$ that $\gamma_{i}=c_{i}=0.$ If $i=1$
then $\gamma_{1}(z)=c_{1}(z).$ Being both elliptic and holomorphic,
$\gamma_{1}=c_{1}$ is a constant. Next,
\[
\gamma_{2}(z)=c_{1}\zeta(pqz-z_{0},\Lambda)+c_{2}(z).
\]
The residual divisor of $c_{2}(z)$ on $X_{\Lambda}$ is therefore
$-p^{-1}q^{-1}c_{1}\sum_{pq\xi=z_{0}\mod\Lambda}[\xi]$. As $c_{2}(z)$
is elliptic, the sum of its residues must vanish, so $c_{1}=0.$ Assume
that $c_{1}=\cdots=c_{i-1}=\gamma_{1}=\cdots=\gamma_{i-1}=0$ ($2\le i\le r-1).$
Then
\[
\gamma_{i}(z)=c_{i}(z),
\]
so by the same argument as before it is constant, and
\[
\gamma_{i+1}(z)=c_{i}\zeta(pqz-z_{0},\Lambda)+c_{i+1}(z)
\]
(if $i=r-1$ take $\delta$ instead of $\gamma_{r}$ and $d$ instead
of $c_{r}$), so as before we conclude that $\gamma_{i}=c_{i}=0.$
The same argument shows that $\delta=d$ is constant.
\end{proof}
\begin{cor}
The matrices $A$ and $T$ are upper triangular, with constants along
the diagonal. So are $B$ and $S$.
\end{cor}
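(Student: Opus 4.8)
The plan is to prove this by induction on $r$, with Lemma \ref{lem:Last row} serving as the inductive step. The statement I would actually induct on is the following mild generalization: whenever $U=U_r(pq,z_0;z)$, $T(z)$ is everywhere holomorphic, $A(z)$ is a matrix of $\Lambda$-elliptic functions, and $U(z/p)T(z)=A(z)U(z)$, then $A$ and $T$ are upper triangular with constant diagonal. The corollary is the special case at hand, and the claim for $B$, $S$ is obtained verbatim with $q$ and equation (\ref{eq:U,B_equation}) replacing $p$ and (\ref{eq:U,A_equation}).

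The base case $r=1$ is immediate: there $N_1=0$, so $U=I$ and $A=T$ is at once $\Lambda$-elliptic and everywhere holomorphic, hence constant. For $r\ge2$, I would first apply Lemma \ref{lem:Last row} to conclude that the bottom rows of $A$ and of $T$ are both equal to $(0,\dots,0,d)$ for a single constant $d\in\mathbb{C}$. Writing $U$, $T$, $A$ in $(r-1,1)$-block form, the upper-left block of $U(z)$ is exactly $U_{r-1}(pq,z_0;z)$, and because the bottom-left blocks of $T$ and $A$ now vanish, comparing upper-left $(r-1)\times(r-1)$ blocks in $U(z/p)T(z)=A(z)U(z)$ yields $U_{r-1}(z/p)T'(z)=A'(z)U_{r-1}(z)$, i.e. $A'(z)=U_{r-1}(z/p)T'(z)U_{r-1}(z)^{-1}$. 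Since $A'$ is a block of the elliptic matrix $A$ and $T'$ a block of the holomorphic matrix $T$, the pair $(A',T')$ again satisfies the hypotheses of the induction statement in rank $r-1$; so by the inductive hypothesis $A'$ and $T'$ are upper triangular with constant diagonal, and combining this with the bottom rows already computed gives the claim for $A$ and $T$.

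I do not expect a genuine obstacle here: the content is entirely in Lemma \ref{lem:Last row}, and what remains is bookkeeping. The one point worth checking carefully is that restricting to the upper-left block really reproduces the hypotheses in rank $r-1$ — that the top-left block of $U_r$ is $U_{r-1}$ with the analogous unipotent structure, and that ellipticity of $A$ and holomorphy of $T$ pass to $A'$ and $T'$. In particular one should note that invertibility of $T$ is never used, only holomorphy, which is fortunate since a priori the block $T'$ need not be invertible.
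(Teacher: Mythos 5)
Your proposal is correct and matches the paper's intent: the paper's proof is literally ``use induction on $r$,'' with Lemma \ref{lem:Last row} as the inductive step, and you have filled in exactly the right bookkeeping (the top-left $(r-1)\times(r-1)$ block of $U_r$ is $U_{r-1}$, and the block comparison in $U(z/p)T(z)=A(z)U(z)$ reproduces the rank-$(r-1)$ hypotheses once the bottom rows are known to vanish off the diagonal). Your remark that invertibility of $T$ is never used is also consistent with the paper, which relies on this later in the proof of Lemma \ref{lem:off-diagonal}.
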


\begin{proof}
Use induction on $r$.
\end{proof}
The significance of the last corollary is that our elliptic $(p,q)$-difference
module is a successive extension of $1$-dimensional ones, and the
heart of the classification (at least when $\mathcal{E}$ is indecomposable)
is to compute the $Ext^{\bullet}$ groups between the 1-dimensional
objects. As these computations are inevitably based on arguments similar
to the ones below, we decided to work directly with canonical forms
of matrices, in a somewhat old-fashioned manner.

To continue, and to simplify the notation, it will be convenient to
assume from now on that $z_{0}=0.$ Write $\zeta(z)$ for $\zeta(z,\Lambda),$
and $U(z)$ for $U_{r}(pq,0;z)$.

Let, as in the introduction,
\[
g_{p}(z)=p\zeta(qz)-\zeta(pqz)\in K_{\Lambda},\,\,\,g_{q}(z)=q\zeta(pz)-\zeta(pqz)\in K_{\Lambda}.
\]
In fact, $g_{p}(z)$ is even $q^{-1}\Lambda$-elliptic, and $g_{q}(z)$
is $p^{-1}\Lambda$-elliptic.

Let
\[
A_{r}^{sp}(z)=(a_{ij})
\]
where $a_{ij}=0$ if $1\le j<i\le r,$ and
\[
a_{ij}=\frac{p^{i-1}}{(j-i)!}g_{p}(z)^{j-i}
\]
if $1\le i\le j\le r.$ Let $T_{r}^{sp}=diag[1,p,\dots,p^{r-1}].$
\begin{lem}
\label{lem:special matrices}We have
\[
A_{r}^{sp}(z)=U(z/p)T_{r}^{sp}U(z)^{-1}.
\]
\end{lem}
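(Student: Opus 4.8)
The plan is to compute the right-hand side $U(z/p)T_r^{sp}U(z)^{-1}$ in closed form and match entries with $A_r^{sp}(z)$, exploiting two elementary facts: that $N_r$ is nilpotent of order $r$, so $U(z)=\exp(\zeta(pqz,\Lambda)N_r)$ is the finite sum $\sum_{k=0}^{r-1}\frac{\zeta(pqz,\Lambda)^k}{k!}N_r^k$ with $(N_r^k)_{ij}=1$ exactly when $j=i+k$; and that $T_r^{sp}=\mathrm{diag}[1,p,\dots,p^{r-1}]$ normalises the one-parameter unipotent group generated by $N_r$. The first reduction I would make is to observe $U(z/p)=\exp(\zeta(pqz/p,\Lambda)N_r)=\exp(\zeta(qz,\Lambda)N_r)$, since $z_0=0$.

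Next I would record the conjugation identity that drives everything. Because $(T_r^{sp})_{ii}=p^{i-1}$ and $N_r$ has its only nonzero entries immediately above the diagonal, a one-line computation of the $(i,i+1)$ entry gives
\[
T_r^{sp}\,N_r\,(T_r^{sp})^{-1}=p^{-1}N_r ,
\]
hence $T_r^{sp}\exp(tN_r)(T_r^{sp})^{-1}=\exp(p^{-1}tN_r)$ for every scalar $t$. Applying this with $t=-\zeta(pqz,\Lambda)$ lets me slide $T_r^{sp}$ to the right:
\[
U(z/p)\,T_r^{sp}\,U(z)^{-1}
=\exp(\zeta(qz,\Lambda)N_r)\,\exp\!\big(-p^{-1}\zeta(pqz,\Lambda)N_r\big)\,T_r^{sp}.
\]
Since all factors here are polynomials in the single matrix $N_r$ they commute, so the two exponentials combine into $\exp\!\big((\zeta(qz,\Lambda)-p^{-1}\zeta(pqz,\Lambda))N_r\big)=\exp\!\big(p^{-1}g_p(z)N_r\big)$, using the very definition $g_p(z)=p\zeta(qz,\Lambda)-\zeta(pqz,\Lambda)$.

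It remains to read off the matrix entries of $\exp(p^{-1}g_p(z)N_r)\,T_r^{sp}$. Expanding the exponential and using $(N_r^{\,j-i})_{ij}=1$ together with $(T_r^{sp})_{jj}=p^{j-1}$, the $(i,j)$ entry for $i\le j$ equals
\[
\frac{\big(p^{-1}g_p(z)\big)^{j-i}}{(j-i)!}\,p^{j-1}
=\frac{p^{i-1}}{(j-i)!}\,g_p(z)^{j-i},
\]
which is exactly $a_{ij}$, while the entries below the diagonal vanish on both sides. This proves the lemma. There is no real obstacle here: the argument is entirely formal matrix bookkeeping, and the only points that require a moment's care are getting the direction of the conjugation $T_r^{sp}N_r(T_r^{sp})^{-1}=p^{-1}N_r$ right and remembering that $\zeta$ is evaluated at $qz$, not $pqz$, in the factor $U(z/p)$.
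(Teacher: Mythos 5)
Your proof is correct and amounts to the same direct verification as the paper's: the paper checks the entrywise identity $\sum_{j=i}^{k}\frac{p^{i-1}}{(j-i)!}g_{p}^{j-i}\frac{1}{(k-j)!}\zeta(pqz)^{k-j}=\frac{p^{k-1}}{(k-i)!}\zeta(qz)^{k-i}$ via the binomial theorem, which is exactly what your step $\exp(\zeta(qz)N_{r})\exp(-p^{-1}\zeta(pqz)N_{r})=\exp(p^{-1}g_{p}(z)N_{r})$ encodes. The only cosmetic difference is that you package the binomial identity as the multiplicativity of $\exp$ on the commuting powers of $N_{r}$ together with the conjugation relation $T_{r}^{sp}N_{r}(T_{r}^{sp})^{-1}=p^{-1}N_{r}$, which is a clean way to organize the same bookkeeping.
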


\begin{proof}
Checking the identity amounts to checking, for $1\le i\le k\le r,$
that
\[
\sum_{j=i}^{k}\frac{p^{i-1}}{(j-i)!}g_{p}(z)^{j-i}\frac{1}{(k-j)!}\zeta(pqz)^{k-j}=\frac{p^{k-1}}{(k-i)!}\zeta(qz)^{k-i}.
\]
This follows at once from the binomial theorem.
\end{proof}
Similarly define $B_{r}^{sp}$, reversing the roles of $p$ and $q,$
let $S_{r}^{sp}=diag[1,q,\dots,q^{r-1}]$, and the analogous lemma,
asserting that
\[
B_{r}^{sp}(z)=U(z/q)S_{r}^{sp}U(z)^{-1}
\]
then holds also. The following lemma is an immediate consequence,
since the diagonal matrices $T_{r}^{sp}$ and $S_{r}^{sp}$ commute.
\begin{lem}
The consistency equation
\[
A_{r}^{sp}(z/q)B_{r}^{sp}(z)=B_{r}^{sp}(z/p)A_{r}^{sp}(z)
\]
holds.
\end{lem}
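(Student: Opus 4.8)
The plan is to read off the identity directly from the factorizations established in Lemma \ref{lem:special matrices} and its $B$-analogue, which express
\[
A_{r}^{sp}(z)=U(z/p)\,T_{r}^{sp}\,U(z)^{-1},\qquad B_{r}^{sp}(z)=U(z/q)\,S_{r}^{sp}\,U(z)^{-1},
\]
where $U(z)=U_{r}(pq,0;z)=\exp(\zeta(pqz,\Lambda)N_{r})$ and $T_{r}^{sp},S_{r}^{sp}$ are the diagonal matrices $[1,p,\dots,p^{r-1}]$ and $[1,q,\dots,q^{r-1}]$.

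First I would substitute $z\mapsto z/q$ in the factorization of $A_{r}^{sp}$ and compose with $B_{r}^{sp}(z)$ on the right. The right-most factor of $A_{r}^{sp}(z/q)$ is $U(z/q)^{-1}$ and the left-most factor of $B_{r}^{sp}(z)$ is $U(z/q)$ — literally the same matrix, by the definition of $U$ — so these telescope, giving
\[
A_{r}^{sp}(z/q)\,B_{r}^{sp}(z)=U(z/pq)\,T_{r}^{sp}S_{r}^{sp}\,U(z)^{-1}.
\]
Symmetrically, substituting $z\mapsto z/p$ in the factorization of $B_{r}^{sp}$ and composing with $A_{r}^{sp}(z)$ on the right yields
\[
B_{r}^{sp}(z/p)\,A_{r}^{sp}(z)=U(z/pq)\,S_{r}^{sp}T_{r}^{sp}\,U(z)^{-1}.
\]

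It then remains only to observe that $T_{r}^{sp}$ and $S_{r}^{sp}$, being diagonal, commute, so the two right-hand sides coincide and the consistency equation holds. There is essentially no obstacle here, as this is a purely formal consequence of the factorized form; the one point requiring a moment's care is the bookkeeping of the arguments of $U$ in the telescoping products, which is immediate from $U_{r}(pq,0;z)=\exp(\zeta(pqz,\Lambda)N_{r})$ (so that, for instance, the inner argument $z/q$ of $U$ occurring in $A_{r}^{sp}(z/q)$ matches the outer argument of $U$ occurring in $B_{r}^{sp}(z)$). Should one prefer, the identity can instead be checked entrywise using the explicit formula $a_{ij}=p^{i-1}g_{p}(z)^{j-i}/(j-i)!$ together with the relation $g_{p}(z)-q\,g_{p}(z/q)=g_{q}(z)-p\,g_{q}(z/p)$ recorded in the introduction, but the factorized form makes the verification transparent.
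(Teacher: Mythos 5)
Your proof is correct and is exactly the paper's argument: the paper states the lemma as "an immediate consequence, since the diagonal matrices $T_{r}^{sp}$ and $S_{r}^{sp}$ commute," which is precisely the telescoping computation you carry out explicitly from the factorizations $A_{r}^{sp}(z)=U(z/p)T_{r}^{sp}U(z)^{-1}$ and $B_{r}^{sp}(z)=U(z/q)S_{r}^{sp}U(z)^{-1}$. Your bookkeeping of the arguments of $U$ is accurate, so nothing further is needed.
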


\begin{defn}
We denote by $M_{r}^{sp}$ the elliptic $(p,q)$-difference module
represented by the pair $(A_{r}^{sp}(z),B_{r}^{sp}(z)).$ We call
it the \emph{standard special module of rank} $r$. Any module isomorphic
to it is called special.
\end{defn}

\begin{lem}
\label{lem:indecomposable canonical form}Assume that $\mathcal{E}\simeq\mathcal{F}_{r}$,
and the notation is as in Corollary \ref{cor:Intermediate step},
with $U(z)=U_{r}(pq,0;z).$ Then there exists an upper-triangular
unipotent scalar matrix $F$, commuting with $U(z)$, of the form
\[
F=\exp(\sum_{\ell=1}^{r-1}\lambda_{\ell}N_{r}^{\ell}),
\]
such that
\[
FA(z)F^{-1}=aA_{r}^{sp}(z),\,\,FB(z)F^{-1}=bB_{r}^{sp}(z)
\]
for some $a,b\in\mathbb{C}^{\times},$ and
\[
FT(z)F^{-1}=aT_{r}^{sp},\,\,FS(z)F^{-1}=bS_{r}^{sp}.
\]
In particular, $T(z)$ and $S(z)$ were scalar matrices to begin with,
and the pair $(A,B)$ is gauge equivalent to the pair $(aA_{r}^{sp},bB_{r}^{sp}).$
\end{lem}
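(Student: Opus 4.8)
The plan is to show that $T(z)$ and $S(z)$ are in fact \emph{constant} matrices, that these constants are conjugate to $aT_r^{sp}$ and $bS_r^{sp}$, for suitable $a,b\in\mathbb{C}^{\times}$, by one and the same unipotent matrix $F$ which is a polynomial in $N_r$, and then to transport the conclusion to $A(z)$ and $B(z)$ by Lemma \ref{lem:special matrices}. Recall first (corollary of Lemma \ref{lem:Last row}) that $A,T,B,S$ are upper-triangular with constant diagonals, say $\mathrm{diag}(T)=\mathrm{diag}(A)=(d_1,\dots,d_r)$ and $\mathrm{diag}(S)=\mathrm{diag}(B)=(e_1,\dots,e_r)$. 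From $(\ref{eq:U,A_equation})$, the $\Lambda$-periodicity of $A$, and the identities $\zeta(q(z+\omega),\Lambda)=\zeta(qz,\Lambda)+q\,\eta(\omega,\Lambda)$ and $\zeta(pq(z+\omega),\Lambda)=\zeta(pqz,\Lambda)+pq\,\eta(\omega,\Lambda)$ (valid since $q\omega,pq\omega\in\Lambda$), one gets the factor of automorphy
\[
T(z+\omega)=\exp(-q\,\eta(\omega,\Lambda)N_r)\,T(z)\,\exp(pq\,\eta(\omega,\Lambda)N_r)\qquad(\omega\in\Lambda),
\]
and the analogous relation for $S$ from $(\ref{eq:U,B_equation})$, with $q$ replaced by $p$ in the left exponential.

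The heart of the proof is to deduce from this that $T$ and $S$ are constant. Expanding the factor of automorphy entrywise and inducting on the distance $j-i$ from the diagonal, one finds that for each fixed $i<j$ the difference $t_{ij}(z+\omega)-t_{ij}(z)$ is a polynomial in $\eta(\omega,\Lambda)$ with no constant term, whose coefficients are explicit expressions in $p,q$ and the entries $t_{lm}$ with $m-l<j-i$ --- which are already known to be constant. This difference is additive in $\omega$, hence defines a homomorphism $\Lambda\to\mathbb{C}$; since $\omega\mapsto\omega$ and $\omega\mapsto\eta(\omega,\Lambda)$ are linearly independent over $\mathbb{C}$ by the Legendre relation, a polynomial in $\eta(\omega,\Lambda)$ that is additive in $\omega$ must be linear, so $t_{ij}(z+\omega)-t_{ij}(z)=c_{ij}\,\eta(\omega,\Lambda)$ for a constant $c_{ij}$. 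Then $t_{ij}(z)-c_{ij}\,\zeta(z,\Lambda)$ is $\Lambda$-elliptic with at worst a simple pole at the origin, of residue $-c_{ij}$; as the residues of an elliptic function sum to zero, $c_{ij}=0$, so $t_{ij}$ is elliptic and entire, hence constant. The $j-i=1$ instance of the same computation also forces $d_{i+1}=p\,d_i$ and $e_{i+1}=q\,e_i$, whence $\mathrm{diag}(T)=a(1,p,\dots,p^{r-1})$ and $\mathrm{diag}(S)=b(1,q,\dots,q^{r-1})$ with $a=d_1$, $b=e_1$ in $\mathbb{C}^{\times}$.

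With $T,S$ constant the rest is linear algebra. Feeding the constant $T$ back into its factor of automorphy gives $\exp(q\,\eta(\omega,\Lambda)N_r)T=T\exp(pq\,\eta(\omega,\Lambda)N_r)$ for all $\omega\in\Lambda$; both sides are polynomial in $\eta(\omega,\Lambda)$, which runs over an infinite set, so this is a polynomial identity, and differentiating at $0$ gives $N_rT=pTN_r$. The consistency equation $(\ref{eq:consistency_equation})$ together with $(\ref{eq:U,A_equation})$--$(\ref{eq:U,B_equation})$ yields $T(z/q)S(z)=S(z/p)T(z)$, which for constant $T,S$ reads $TS=ST$. Since $p,q\ge 2$, the diagonal entries of $T$ (resp.\ of $S$) are pairwise distinct, so there is a \emph{unique} unipotent upper-triangular $F$ simultaneously diagonalising $T$ and $S$, and because $T,S$ are upper-triangular the eigenvalues end up in the order dictated by the diagonals, i.e.\ $FTF^{-1}=aT_r^{sp}$ and $FSF^{-1}=bS_r^{sp}$. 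Substituting $T=aF^{-1}T_r^{sp}F$ into $N_rT=pTN_r$ and conjugating by $F$ gives $(FN_rF^{-1})T_r^{sp}=pT_r^{sp}(FN_rF^{-1})$; comparing entries shows $FN_rF^{-1}$ is supported on the first superdiagonal, while conjugation by a unipotent upper-triangular matrix fixes the superdiagonal entries of $N_r$, so $FN_rF^{-1}=N_r$. Hence $F$ lies in the centraliser of $N_r$, i.e.\ $F=\exp(\sum_{\ell=1}^{r-1}\lambda_\ell N_r^\ell)$, and in particular commutes with $U(z)$. Finally $FA(z)F^{-1}=U(z/p)(FTF^{-1})U(z)^{-1}=aU(z/p)T_r^{sp}U(z)^{-1}=aA_r^{sp}(z)$ by Lemma \ref{lem:special matrices}, and likewise $FB(z)F^{-1}=bB_r^{sp}(z)$; the assertions about $T,S$ and the gauge equivalence are then immediate.

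The main obstacle is the middle step --- showing that $T$ and $S$ are genuinely constant --- which is the ``delicate bootstrapping with elliptic functions''. One has to run the induction on $j-i$, at each stage read off the precise shape of the period-difference of the entry in question, and kill it using both the Legendre relation (to cut the degree in $\eta$ down to $1$) and the vanishing of the residue sum (to remove the leftover $\zeta$-term). The bracketing steps are comparatively routine: bookkeeping with $U$ on one side, linear algebra on $N_r$ on the other.
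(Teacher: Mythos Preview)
Your proof is correct and follows a genuinely different route from the paper's. The paper argues by induction on $r$: assuming the top-left $(r-1)\times(r-1)$ block has already been brought to special form by conjugation with $\exp(\sum_{\ell\le r-2}\lambda_\ell N_r^\ell)$, it analyses the last column of the relation $A(z)U(z)=U(z/p)T(z)$ row by row via a \emph{decreasing} induction on the row index, repeatedly invoking (Hol) and (Res) to pin down each entry, and finally conjugates by $I+\lambda_{r-1}N_r^{r-1}$ to kill the surviving north-east corner. You instead isolate the analytic content first --- proving $T,S$ constant through the factor of automorphy $T(z+\omega)=\exp(-q\eta N_r)\,T(z)\,\exp(pq\eta N_r)$ and an induction on $j-i$ --- and then settle the normalisation by pure linear algebra, the key point being that the automorphy relation, once $T$ is constant, differentiates to $N_rT=pTN_r$, which forces the diagonalising $F$ to centralise $N_r$. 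This separation is conceptually cleaner and the automorphy viewpoint may generalise more readily; the paper's hands-on row computation, however, is recycled almost verbatim in Lemma~\ref{lem:off-diagonal} for the off-diagonal blocks in the mixed-type case, so it earns its keep later. One small quibble: the Legendre relation enters only to ensure $\eta\not\equiv0$, so that $\eta(\Lambda)$ is infinite and the additivity of $P(\eta(\omega))$ in $\omega$ upgrades to the polynomial identity $P(X+Y)=P(X)+P(Y)$, forcing $P$ linear; the phrase ``linear independence of $\omega\mapsto\omega$ and $\omega\mapsto\eta(\omega)$'' is not quite the statement you need there.
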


This proves the following.
\begin{prop}
Assuming that $\mathcal{E}\simeq\mathcal{F}_{r},$ the elliptic $(p,q)$-difference
module represented by $(A,B)$ is isomorphic to
\[
M_{r}^{sp}(a,b)=M_{r}^{sp}\otimes M_{1}(a,b).
\]
\end{prop}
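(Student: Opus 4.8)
The plan is to read the Proposition off directly from Corollary~\ref{cor:Intermediate step} and Lemma~\ref{lem:indecomposable canonical form}, the only substantive point remaining being to recognize the resulting canonical pair as a representative of the tensor product $M_r^{sp}\otimes M_1(a,b)$.

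First I would invoke Corollary~\ref{cor:Intermediate step}: since $\mathcal{E}\simeq\mathcal{F}_r$ the type of $M$ is $(r)$, so after replacing $(A,B)$ by a gauge-equivalent pair and passing to a suitable lattice $\Lambda$ we may assume $A(z)=U(z/p)T(z)U(z)^{-1}$ and $B(z)=U(z/q)S(z)U(z)^{-1}$ with $U(z)=U_r(pq,z_0;z)$ and $T,S$ everywhere holomorphic with holomorphic inverse; as in the text we normalize $z_0=0$. Lemma~\ref{lem:indecomposable canonical form} now produces an upper-triangular unipotent \emph{scalar} matrix $F$, commuting with $U(z)$, such that $FA(z)F^{-1}=aA_r^{sp}(z)$ and $FB(z)F^{-1}=bB_r^{sp}(z)$ for some $a,b\in\mathbb{C}^{\times}$ (and simultaneously $FT(z)F^{-1}=aT_r^{sp}$, $FS(z)F^{-1}=bS_r^{sp}$, so $T$ and $S$ were scalar to begin with). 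Conjugation by the constant matrix $F$ is in particular a gauge transformation over $K$, so, composing it with the gauge equivalence supplied by Corollary~\ref{cor:Intermediate step} and using transitivity of gauge equivalence, the module $M$ is represented by the pair $(aA_r^{sp},bB_r^{sp})$.

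It then remains to observe that $(aA_r^{sp},bB_r^{sp})$ represents $M_r^{sp}(a,b)$. By definition $M_r^{sp}$ is the module with representing pair $(A_r^{sp},B_r^{sp})$, and $M_1(a,b)$ is the rank-$1$ module with $\Phi_\sigma v=a^{-1}\sigma(v)$, $\Phi_\tau v=b^{-1}\tau(v)$, i.e. with representing pair of $1\times1$ scalars $((a),(b))$. For the tensor product $M_r^{sp}\otimes M_1(a,b)$ one has $\Phi_\sigma^{\otimes}=\Phi_\sigma\otimes\Phi_\sigma$, whose matrix is the Kronecker product $A_r^{sp}\otimes(a)=aA_r^{sp}$, and likewise the $\Phi_\tau$-matrix is $bB_r^{sp}$ (the consistency equation for this pair being inherited from that for $(A,B)$, or directly from the fact that $aT_r^{sp}$ and $bS_r^{sp}$ are commuting diagonal matrices). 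Since elliptic $(p,q)$-difference modules are classified up to gauge equivalence of their representing pairs, we conclude $M\simeq M_r^{sp}\otimes M_1(a,b)=M_r^{sp}(a,b)$, as asserted.

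I would finally note where the real content sits: the present argument is entirely formal, and the whole difficulty is concentrated in Lemma~\ref{lem:indecomposable canonical form} — the delicate inductive residue computations that force the off-diagonal entries of $T$ and $S$ into the prescribed shape and pin down $F$. The only points calling for care in the deduction itself are the bookkeeping of the two successive gauge transformations and the fact, recorded in \S\ref{subsec:Associating a vector bundle}, that the isomorphism type of $\mathcal{E}$ — hence the hypothesis $\mathcal{E}\simeq\mathcal{F}_r$ — is unchanged by the moves used to bring $(A,B)$ into the form of Corollary~\ref{cor:Intermediate step}.
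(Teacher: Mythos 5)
Your proposal is correct and follows exactly the paper's route: the paper derives this Proposition directly from Lemma \ref{lem:indecomposable canonical form} (together with Corollary \ref{cor:Intermediate step}), with the identification of $(aA_r^{sp},bB_r^{sp})$ as a representing pair for $M_r^{sp}\otimes M_1(a,b)$ left implicit. Your extra bookkeeping of the gauge transformations and the Kronecker-product verification is a correct filling-in of that implicit step.
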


Note that together with the case $\mathcal{E}\simeq\mathcal{O}_{X}^{r}$
mentioned before, this completes the classification of $(p,q)$-difference
modules for $r\le2.$
\begin{proof}
(of the Lemma). We prove our claim by induction on $r,$ the case
$r=1$ being trivial. The matrix $F$ will be of the form
\[
F=\exp(\sum_{\ell=1}^{r-1}\lambda_{\ell}N_{r}^{\ell}),
\]
and will therefore commute with $U(z).$ Since all the matrices are
in upper triangular form, the induction hypothesis allows us to assume
that the first $r-1$ columns of $A(z)$ agree with those of $A_{r}^{sp}(z)$
and similarly for $B(z)$ and $B_{r}^{sp}(z).$ We may also assume
(or, it follows from the formulae) that the first $r-1$ columns of
$T(z)$ and $T_{r}^{sp},$ and similarly of $S(z)$ and $S_{r}^{sp},$
agree. 

Note that in the induction step, if $F'=\exp(\sum_{\ell=1}^{r-2}\lambda_{\ell}N_{r-1}^{\ell})$
is the matrix conjugating the north-west blocks of size $(r-1)\times(r-1)$
into their standard form, we must replace all four $r\times r$ matrices
by their conjugates by $\exp(\sum_{\ell=1}^{r-2}\lambda_{\ell}N_{r}^{\ell})$.
For example, if $r=3$ and we have used
\[
F'=\left(\begin{array}{cc}
1 & \lambda\\
0 & 1
\end{array}\right)
\]
to bring the north-west blocks of size $2\times2$ into the desired
form, we should conjugate $A,B,T$ and $S$ by
\[
\left(\begin{array}{ccc}
1 & \lambda & \lambda^{2}/2\\
0 & 1 & \lambda\\
0 & 0 & 1
\end{array}\right)
\]
before we proceed as below. Since the matrix with which we have conjugated
commutes with $U_{r}(z),$ the equations $(\ref{eq:U,A_equation},\ref{eq:U,B_equation})$
remain intact.

Thus we assume (ignoring the trivial twist by $M_{1}(a,b)$) that
\[
A(z)=\left(\begin{array}{ccccccc}
1 & g_{p}(z) & \frac{1}{2}g_{p}(z)^{2} & \cdots &  & \frac{1}{(r-2)!}g_{p}(z)^{r-2} & a_{1}(z)\\
 & p & pg_{p}(z) & \cdots &  & \frac{1}{(r-3)!}pg_{p}(z)^{r-3} & pa_{2}(z)\\
 &  & p^{2} & \cdots &  & \vdots & \vdots\\
 &  &  & \ddots\\
 &  &  &  & p^{r-3} & p^{r-3}g_{p}(z) & p^{r-3}a_{r-2}(z)\\
 &  &  &  &  & p^{r-2} & p^{r-2}a_{r-1}(z)\\
 &  &  &  &  &  & p^{r-1}a_{r}
\end{array}\right)
\]
and
\[
T(z)=\left(\begin{array}{ccccccc}
1 &  &  &  &  &  & t_{1}(z)\\
 & p &  &  &  &  & pt_{2}(z)\\
 &  & p^{2} &  &  &  & \vdots\\
 &  &  & \ddots\\
 &  &  &  & p^{r-3} &  & p^{r-3}t_{r-2}(z)\\
 &  &  &  &  & p^{r-2} & p^{r-2}t_{r-1}(z)\\
 &  &  &  &  &  & p^{r-1}t_{r}
\end{array}\right).
\]
Similar equations will hold for $B(z)$ and $S(z)$. We shall prove
the following, by \emph{decreasing} induction on $i.$
\begin{itemize}
\item $t_{r}=a_{r}=1$
\item $t_{i}(z)=0$ and $t_{i-1}(z)=t_{i-1}$ is constant ($2\le i\le r-1)$
\item $a_{i}(z)=\frac{1}{(r-i)!}g_{p}(z)^{r-i}$ ($2\le i\le r-1).$
\end{itemize}
It will follow that $t_{1}$ is constant and $t_{2}=\cdots=t_{r-1}=0.$
Similarly $s_{1}$ is a constant and $s_{2}=\cdots=s_{r-1}=0.$ In
particular $T$ and $S$ are scalar, equal to $diag[1,p,\dots,p^{r-1}]$
and $diag[1,q,\dots q^{r-1}],$ except for the north-east corner.
Now the consistency equation between $A$ and $B$ implies that $T$
and $S$ commute. Thus
\[
s_{1}+t_{1}q^{r-1}=t_{1}+s_{1}p^{r-1}.
\]
Let $\lambda_{r-1}=-t_{1}/(p^{r-1}-1)=-s_{1}/(q^{r-1}-1).$ It is
easily verified that by conjugating all our matrices by
\[
F_{r}=\exp(\lambda_{r-1}N_{r}^{r-1})=I+\lambda_{r-1}N_{r}^{r-1},
\]
a matrix commuting with $U(z),$ we bring them to the desired form,
i.e. $F_{r}TF_{r}^{-1}=T_{r}^{sp},$ $F_{r}SF_{r}^{-1}=S_{r}^{sp},$
and as a result (or by direct computation) $A(z)$ and $B(z)$ get
transformed into $A_{r}^{sp}$ and $B_{r}^{sp}.$ All that remains
is to check the three ``bullets''. We will do it for $A$ and $T,$
the case of $B$ and $S$ being identical. The method will be the
same ``bootstrapping'' technique used in the proof of Lemma \ref{lem:Last row}.
Note that in the $i+1$ step of the induction we only get that $t_{i}$
is constant, but the $i$th step (the \emph{next} one, since this
is a \emph{decreasing} induction) strengthens it and shows that $t_{i}=0.$
This explains why we end up with $t_{1}$ being only a scalar, which
we kill by conjugation with $F_{r}.$ As a final preparation, we remark
that we shall be using repeatedly the same two principles:
\begin{itemize}
\item (Hol) an everywhere holomorphic elliptic function is constant,
\item (Res) the sum of the residues of an elliptic function over a fundamental
domain for the period lattice is 0.
\end{itemize}
We start working out the consequences of the equation $A(z)U(z)=U(z/p)T(z)$
from the bottom up. Row $r$ gives $a_{r}=t_{r}.$

From row $r-1$ we get (after dividing by a suitable power of $p$)
\[
a_{r-1}(z)-t_{r-1}(z)=pt_{r}\zeta(qz)-\zeta(pqz).
\]
By (Res) applied to $a_{r-1}(z)$ ($t_{r-1}(z)$ contributes no residues)
we must have $a_{r}=t_{r}=1,$ so the RHS of the last equation is
the elliptic function $g_{p}(z).$ Then (Hol) applied to $t_{r-1}(z)=a_{r-1}(z)-g_{p}(z)$
gives that $t_{r-1}$\emph{ is constant}.

Row $r-2$ now gives
\[
\frac{1}{2}\zeta(pqz)^{2}+g_{p}(z)\zeta(pqz)+a_{r-2}(z)=t_{r-2}(z)+pt_{r-1}\zeta(qz)+\frac{1}{2}p^{2}\zeta(qz)^{2}.
\]
Rearranging the terms this gives
\[
a_{r-2}(z)-t_{r-2}(z)=\frac{1}{2}g_{p}(z)^{2}+pt_{r-1}\zeta(qz).
\]
(Res) gives $t_{r-1}=0,$ hence also $a_{r-1}(z)=g_{p}(z)$. By (Hol)
$t_{r-2}$ is constant and $a_{r-2}(z)-t_{r-2}=\frac{1}{2}g_{p}(z)^{2}.$

This was the case $i=r-1$ of the second bullet, the base of the induction.
Consider now row $r-k,$ $k\ge3,$ corresponding to case $i=r-k+1\le r-2$
of the second bullet. By the induction hypothesis (with $i\ge r-k+2$)
we know that $t_{r-k+1}$ is constant and $t_{r-k+2}=\cdots=t_{r-1}=0.$ 

Cancelling out a power of $p$ we get
\[
\sum_{j=0}^{k-1}\frac{1}{j!(k-j)!}\zeta(pqz)^{k-j}g_{p}(z)^{j}+a_{r-k}(z)=t_{r-k}(z)+pt_{r-k+1}\zeta(qz)+\frac{1}{k!}(p\zeta(qz))^{k}.
\]
Recalling that $p\zeta(qz)=\zeta(pqz)+g_{p}(z),$ the binomial theorem
gives
\[
a_{r-k}(z)-t_{r-k}(z)=pt_{r-k+1}\zeta(qz)+\frac{1}{k!}g_{p}(z)^{k}.
\]
As before, (Res) gives $t_{r-k+1}=0$, as well as $a_{r-k+1}(z)=\frac{1}{(k-1)!}g_{p}(z)^{k-1}$
and then (Hol) yields that $t_{r-k}$ is constant and $a_{r-k}(z)-t_{r-k}=\frac{1}{k!}g_{p}(z)^{k}.$
The induction step is thereby established, and with it the proof of
the Lemma.
\end{proof}

\subsection{Interlude: rank 3 modules}

The higher the rank, the more options there are to assemble an elliptic
$(p,q)$-difference module from the special modules $M_{r}^{sp}$
and the ones obtained from commuting pairs of scalar matrices $(A_{0},B_{0}).$
We illustrate this by classifying the rank 3 modules.
\begin{prop}
\label{prop:Rank 3 modules}Every rank 3 module belongs to one of
the following mutually disjoint classes:

(i) Type (1,1,1): A module represented by a commuting pair of scalar
matrices $(A_{0},B_{0}).$

(ii) Type (2,1): $M_{2}^{sp}(a,b)\oplus M_{1}(a',b')$.

(iii) Type (2,1): a non-split extension of $M_{1}(a,b)$ by $M_{2}^{sp}(a,b).$
For every $a,b$ there is a family of pairwise non-isomorphic modules
of this type indexed by $\mathbb{P}^{1}(\mathbb{C})$.

(iv) Type (2,1): a non-split extension of $M_{2}^{sp}(a,b)$ by $M_{1}(pa,qb)$.
For every $a,b$ there is a family of pairwise non-isomorphic modules
of this type indexed by $\mathbb{P}^{1}(\mathbb{C})$.

(v) Type (3): $M_{3}^{sp}(a,b).$
\end{prop}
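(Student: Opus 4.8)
The plan is to dispose of the extreme types using results already in hand, and then to work out type $(2,1)$ directly. If the type of $M$ is $(1,1,1)$ then the bundle $\mathcal{E}$ is trivial, and the first Proposition of \S\ref{subsec:Two extreme cases} shows that $M$ comes from a commuting pair of scalar matrices, which is case (i). If the type is $(3)$ then $\mathcal{E}\simeq\mathcal{F}_3$ is indecomposable, and the second Proposition of that subsection gives $M\simeq M_3^{sp}(a,b)$, which is case (v). From now on I assume the type is $(2,1)$, i.e.\ $\mathcal{E}\simeq\mathcal{F}_2\oplus\mathcal{O}_X$, and I must produce exactly the classes (ii), (iii), (iv).

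First I would put $M$ in normal form. By Corollary \ref{cor:Intermediate step}, applied with $U(z)=U_2(pq,0;z)\oplus 1=I_3+\zeta(pqz)E_{12}$, we may represent $M$ by $A=U(z/p)TU(z)^{-1}$, $B=U(z/q)SU(z)^{-1}$ with $T,S$ everywhere holomorphic and invertible. Running the residue/holomorphicity bootstrap of Lemma \ref{lem:indecomposable canonical form} on the individual entries of the elliptic matrix $A=U(z/p)TU(z)^{-1}$ -- using only the two principles that an everywhere holomorphic elliptic function is constant and that the residues of an elliptic function sum to zero -- forces $T$ and $S$ to be \emph{constant}, of the shape
\[
T=\begin{pmatrix} a & T_{12} & T_{13}\\ 0 & pa & 0\\ 0 & T_{32} & a'\end{pmatrix},\qquad
S=\begin{pmatrix} b & S_{12} & S_{13}\\ 0 & qb & 0\\ 0 & S_{32} & b'\end{pmatrix},
\]
with $a,a',b,b'\in\mathbb{C}^{\times}$; the relation between the two diagonal entries in the $\mathcal{F}_2$-block is precisely the output of the residue computation on the $(1,2)$-entry of $A$. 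Since $A(z/q)B(z)=U(z/pq)\,TS\,U(z)^{-1}$ and $B(z/p)A(z)=U(z/pq)\,ST\,U(z)^{-1}$, the consistency equation becomes $[T,S]=0$, which unravels into three scalar identities coming from the $(1,2)$, $(1,3)$ and $(3,2)$ entries.

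Next I would exploit the residual gauge freedom, which is conjugation of $(T,S)$ by the constant matrices commuting with $U$, namely $F=\left(\begin{smallmatrix} f & \mu & \nu\\ 0 & f & 0\\ 0 & \rho & g\end{smallmatrix}\right)$ with $f,g\in\mathbb{C}^{\times}$ (for such $F$ the matrix $C(z)=U(z)FU(z)^{-1}$ is $\Lambda$-elliptic, hence a legitimate gauge transformation). Conjugating by $I+\rho E_{32}$ shifts $T_{32}\mapsto T_{32}+\rho(pa-a')$ and $S_{32}\mapsto S_{32}+\rho(qb-b')$; together with the $(3,2)$ relation this lets me arrange $T_{32}=S_{32}=0$ \emph{unless} $(a',b')=(pa,qb)$. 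In that non-resonant case $\langle e_1,e_2\rangle$ is a rank-$2$ submodule whose representing pair, after a further conjugation by a suitable $I+\mu E_{12}$, becomes $(aA_2^{sp},bB_2^{sp})$ -- the compatibility of the off-diagonal constants $T_{12},S_{12}$ being forced by the rank-$2$ consistency relation -- so $\langle e_1,e_2\rangle\simeq M_2^{sp}(a,b)$, with quotient $M_1(a',b')$, and $M$ is an extension $0\to M_2^{sp}(a,b)\to M\to M_1(a',b')\to 0$; if it splits we are in case (ii). In the resonant case $(a',b')=(pa,qb)$ the always-present rank-$2$ submodule $\langle e_1,e_3\rangle$ is a scalar module with distinct $\Phi_\sigma$-eigenvalues $a\ne pa$, hence $\simeq M_1(a,b)\oplus M_1(pa,qb)$; its summand isomorphic to $M_1(pa,qb)$ exhibits $M$ as an extension $0\to M_1(pa,qb)\to M\to Q\to 0$ with $Q$ a rank-$2$ module whose composition factors are those of $M_2^{sp}(a,b)$, and checking indecomposability of $Q$ identifies $Q\simeq M_2^{sp}(a,b)$.

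The core of the argument -- and what I expect to be the main obstacle -- is the computation of the relevant $\mathrm{Ext}^1$ groups in the category of elliptic $(p,q)$-difference modules. A non-split extension $0\to M_2^{sp}(a,b)\to M\to M_1(a',b')\to 0$ is a nonzero class in $\mathrm{Ext}^1(M_1(a',b'),M_2^{sp}(a,b))\cong H^1(\Gamma,M_2^{sp}(a/a',b/b'))$; via the defining sequence $0\to M_1(c,d)\to M_2^{sp}(c,d)\to M_1(pc,qd)\to 0$ and its long exact cohomology sequence, this $H^1$ is squeezed between $H^1(\Gamma,M_1(c,d))$ and $H^1(\Gamma,M_1(pc,qd))$. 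The rank-$\le2$ classification already established supplies the rank-$1$ inputs: $H^0(\Gamma,M_1(x,y))\ne0$ iff $(x,y)=(1,1)$; $H^1(\Gamma,M_1(x,y))=0$ unless $(x,y)\in\{(1,1),(1/p,1/q)\}$, with $H^1(\Gamma,M_1(1,1))\cong\mathbb{C}^2$ and $H^1(\Gamma,M_1(1/p,1/q))\ne0$ (witnessed by the unique non-split extension $M_2^{sp}(1/p,1/q)$ of $M_1(1,1)$ by $M_1(1/p,1/q)$, the scalar extensions being forced to split by their own consistency equation), and $H^1(\Gamma,M_1(p,q))=0$. Feeding this in, $\mathrm{Ext}^1(M_1(a',b'),M_2^{sp}(a,b))$ is nonzero only for $(a',b')\in\{(a,b),(pa,qb),(p^2a,q^2b)\}$; the value $(p^2a,q^2b)$ yields $M\simeq M_3^{sp}(a,b)$ (compare composition factors) of type $(3)$, contrary to our assumption, and $(pa,qb)$ is the resonant case. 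So a non-split extension of type $(2,1)$ forces $(a',b')=(a,b)$, which is case (iii), and since $H^1(\Gamma,M_2^{sp})\cong H^1(\Gamma,M_1(1,1))$ is $2$-dimensional (use the long exact sequence and $H^1(\Gamma,M_1(p,q))=0$), the non-split classes modulo the scaling action of $\mathrm{Aut}(M_1(a,b))\times\mathrm{Aut}(M_2^{sp}(a,b))=\mathbb{C}^{\times}\times\mathbb{C}^{\times}$ form a $\mathbb{P}^1(\mathbb{C})$. The resonant case (iv) is symmetric: $\mathrm{Ext}^1(M_2^{sp}(a,b),M_1(pa,qb))\cong H^1(\Gamma,M_2^{sp})$ is again $2$-dimensional, giving a $\mathbb{P}^1(\mathbb{C})$, and a composition-factor count again prevents the type from jumping to $(3)$. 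Disjointness is then immediate: the three types are distinct invariants; within type $(2,1)$, class (ii) is the decomposable one while (iii) and (iv) are indecomposable, and (iii), (iv) are separated by whether the rank-$2$ subobject with bundle $\mathcal{F}_2$ is $M_2^{sp}(a,b)$ (case (iii)) or nonexistent (case (iv), where instead $M_1(pa,qb)$ is a submodule with $M_2^{sp}(a,b)$ a quotient). Thus the only genuinely hard inputs are these $\mathrm{Ext}^1$ computations -- in particular the two-dimensionality that produces the $\mathbb{P}^1(\mathbb{C})$'s rather than isolated points -- everything else being the familiar residue/holomorphicity bootstrap and elementary matrix bookkeeping.
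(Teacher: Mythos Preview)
Your argument is correct and lands on the same trichotomy as the paper, but you take a different route once the matrices $T,S$ have been shown to be scalar. The paper stays entirely in matrix language: it writes down the three commutation identities $st'=ts'$, $(b'-b)t=(a'-a)s$, $(b'-qb)t'=(a'-pa)s'$ and then, case by case on whether $(a',b')$ equals $(a,b)$ or $(pa,qb)$, uses explicit conjugation by legitimate matrices to reduce $(T,S)$ to a canonical form, reading off the $\mathbb{P}^{1}$-parameter directly as $(s:t)$ or $(s':t')$. You instead recast the problem as an $\mathrm{Ext}^{1}$ computation and extract the $\mathbb{P}^{1}$ from the two-dimensionality of $H^{1}(\Gamma,M_{2}^{sp})$, obtained via the long exact sequence and the rank-$\le 2$ classification. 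The paper in fact anticipates this alternative (see the remark after the corollary to Lemma~\ref{lem:Last row}) but opts for the ``old-fashioned'' matrix approach because the $\mathrm{Ext}$ computations ultimately rest on the same (Hol)/(Res) bootstrap. What your approach buys is a cleaner explanation of \emph{why} only the values $(a',b')\in\{(a,b),(pa,qb)\}$ can produce non-split type-$(2,1)$ modules; what the paper's approach buys is that no auxiliary $H^{1}$ inputs need to be justified separately. One point you should tighten: in the resonant case, ``checking indecomposability of $Q$'' is not enough to force $Q\simeq M_{2}^{sp}(a,b)$, since there exist indecomposable \emph{scalar} rank-$2$ modules; the clean argument is that after killing $T_{13},S_{13}$ (possible since $a\neq pa$) the northwest $2\times2$ block of $A$ is literally $aA_{2}^{sp}$, so $Q\simeq M_{2}^{sp}(a,b)$ by inspection.
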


\begin{proof}
We have classified the modules of type (1,1,1) or (3). It remains
to classify modules of type (2,1). We shall do it by finding canonical
forms for the matrices $T,S$ (thereby for $A,B$) in the equations
\[
A(z)=U(z/p)T(z)U(z)^{-1},\,\,\,B(z)=U(z/q)S(z)U(z)^{-1},
\]
where
\[
U(z)=\left(\begin{array}{ccc}
1 & \zeta(pqz) & 0\\
0 & 1 & 0\\
0 & 0 & 1
\end{array}\right).
\]
We are allowed to conjugate $A,B,T$ and $S$ by scalar invertible
matrices that commute with $U(z).$ Up to the center, they are of
the form
\[
\left(\begin{array}{ccc}
1 & * & *\\
0 & 1 & 0\\
0 & * & *
\end{array}\right).
\]
We call these matrices \emph{legitimate.}

Writing $A,B,T$ and $S$ in blocks, and applying Lemma \ref{lem:indecomposable canonical form}
for $r=1$ and 2 we may assume that
\[
T=\left(\begin{array}{ccc}
a & 0 & t_{13}(z)\\
0 & pa & t_{23}(z)\\
t_{31}(z) & t_{32}(z) & a'
\end{array}\right),\,\,\,S=\left(\begin{array}{ccc}
b & 0 & s_{13}(z)\\
0 & qb & s_{23}(z)\\
s_{31}(z) & s_{32}(z) & b'
\end{array}\right)
\]
where $a,b,a',b'\in\mathbb{C}^{\times}$ and the $t_{ij}$ and $s_{ij}$
are holomorphic functions. For $A$ and $B$ we get
\[
A=\left(\begin{array}{ccc}
a & ag_{p}(z) & a_{13}(z)\\
0 & pa & a_{23}(z)\\
a_{31}(z) & a_{32}(z) & a'
\end{array}\right),\,\,\,B=\left(\begin{array}{ccc}
b & bg_{q}(z) & b_{13}(z)\\
0 & qb & b_{23}(z)\\
b_{31}(z) & b_{32}(z) & b'
\end{array}\right)
\]
where the $a_{ij}$ and $b_{ij}$ are elliptic functions. Using
\[
A(z)U(z)=U(z/p)T(z)
\]
the bottom row gives
\[
(a_{31}(z),a_{31}(z)\zeta(pqz)+a_{32}(z),a')=(t_{31}(z),t_{32}(z),a').
\]
By (Hol) $a_{31}=t_{31}$ is constant, and then by (Res) $a_{31}=t_{31}=0$
and $a_{32}=t_{32}$ is constant. From the last column we get
\[
^{t}(a_{13}(z),a_{23}(z),a')=\,^{t}(t_{13}(z)+\zeta(qz)t_{23}(z),t_{23}(z),a').
\]
This implies, in the same way, that $a_{23}=t_{23}=0$ and $a_{13}=t_{13}$
is constant. Similarly for $S$ and $B$. We conclude that
\[
T=\left(\begin{array}{ccc}
a & 0 & t\\
0 & pa & 0\\
0 & t' & a'
\end{array}\right),\,\,\,S=\left(\begin{array}{ccc}
b & 0 & s\\
0 & qb & 0\\
0 & s' & b'
\end{array}\right)
\]
are \emph{scalar }matrices. The consistency equation for $A$ and
$B$ forces $T$ and $S$ \emph{to commute}. This yields
\begin{equation}
st'=ts',\,\,\,(b'-b)t=(a'-a)s,\,\,\,(b'-qb)t'=(a'-pa)s'.\label{eq:commutation}
\end{equation}

Assume that $a'\ne a.$ Conjugating $T$ by 
\[
\left(\begin{array}{ccc}
1 &  & \lambda\\
 & 1\\
 &  & 1
\end{array}\right)
\]
replaces $t$ by $t+\lambda(a'-a),$ so an appropriate choice of $\lambda$
kills it and we may assume $t=0.$ Equation $(\ref{eq:commutation})$
forces then $s=0$ also. Symmetrically, if $b'\ne b$ we may assume
$s=0$ and deduce that $t=0$. Thus if $(a',b')\ne(a,b)$ we may assume
that $s=t=0.$ In the process of killing $t$ and $s$ we might have
introduced a non-zero entry at $T_{12}$ and $S_{12}$, but these
may now be killed by conjugation by a matrix of the form
\[
\left(\begin{array}{ccc}
1 & \mu\\
 & 1\\
 &  & 1
\end{array}\right).
\]

Assume that $(a',b')\ne(pa,qb).$ Similar arguments show that conjugating
by a suitable legitimate matrix we may assume that $s'=t'=0.$ Furthermore,
if $s=t=0,$ this is unchanged by the conjugation.

We conclude that if $(a',b')\ne(a,b),(pa,qb)$ we may take $T$ and
$S$ diagonal. In this case $M$ is of class (ii), i.e. a direct sum
of $M_{2}^{sp}(a,b)$ and $M_{1}(a',b').$ 

If $(a',b')=(a,b)$ we may assume $s'=t'=0$. In this case
\[
T=\left(\begin{array}{ccc}
a & 0 & t\\
0 & pa & 0\\
0 & 0 & a
\end{array}\right),\,\,\,S=\left(\begin{array}{ccc}
b & 0 & s\\
0 & qb & 0\\
0 & 0 & b
\end{array}\right).
\]
If $s=t=0$ we land again in class (ii). Otherwise, conjugation by
$diag[1,1,u]$ shows that the only further invariant of $M$ is $(s:t)\in\mathbb{P}^{1}(\mathbb{C}).$
In this case
\[
A=\left(\begin{array}{ccc}
a & ag_{p}(z) & t\\
0 & pa & 0\\
0 & 0 & a
\end{array}\right),\,\,\,B=\left(\begin{array}{ccc}
b & bg_{q}(z) & s\\
0 & qb & 0\\
0 & 0 & b
\end{array}\right)
\]
and we are in class (iii). The module $M$ can be described as the
push-out
\[
\begin{array}{ccccccccc}
 &  & 0 &  & 0\\
 &  & \downarrow &  & \downarrow\\
0 & \to & M_{1}(a,b) & \to & M_{2}^{sp}(a,b) & \to & M_{1}(pa,qb) & \to & 0\\
 &  & \downarrow & \boxempty & \downarrow\\
0 & \to & M' & \to & M\\
 &  & \downarrow &  & \downarrow\\
 &  & M_{1}(a,b) & = & M_{1}(a,b)\\
 &  & \downarrow &  & \downarrow\\
 &  & 0 &  & 0
\end{array}
\]
where $\boxempty$ is co-cartesian. Here $M'$ is a rank 2 scalar
extension with invariant $(s:t).$ By this we mean that there exists
a basis of $M'$ with respect to which the matrices of $\Phi_{\sigma}$
and $\Phi_{\tau}$ are the scalar matrices
\[
\left(\begin{array}{cc}
a & t\\
0 & a
\end{array}\right)^{-1},\,\,\,\left(\begin{array}{cc}
b & s\\
0 & b
\end{array}\right)^{-1},
\]
respectively. Note that $(s:t)$ is independent of the chosen basis.

Finally, if $(a',b')=(pa,qb)$ we may assume that $s=t=0,$ so
\[
T=\left(\begin{array}{ccc}
a & 0 & 0\\
0 & pa & 0\\
0 & t' & pa
\end{array}\right),\,\,\,S=\left(\begin{array}{ccc}
b & 0 & 0\\
0 & qb & 0\\
0 & s' & qb
\end{array}\right).
\]
Once again, if $(s',t')\ne(0,0)$ then
\[
A=\left(\begin{array}{ccc}
a & ag_{p}(z) & 0\\
0 & pa & 0\\
0 & t' & pa
\end{array}\right),\,\,\,B=\left(\begin{array}{ccc}
b & bg_{q}(z) & 0\\
0 & qb & 0\\
0 & s' & qb
\end{array}\right).
\]
Now we are in class (iv), $M$ is the pull-back
\[
\begin{array}{ccccccccc}
 &  &  &  & 0 &  & 0\\
 &  &  &  & \downarrow &  & \downarrow\\
 &  &  &  & M_{1}(pa,qb) & = & M_{1}(pa,qb) & \to & 0\\
 &  &  &  & \downarrow &  & \downarrow\\
 &  &  &  & M & \to & M' & \to & 0\\
 &  &  &  & \downarrow & \boxempty & \downarrow\\
0 & \to & M_{1}(a,b) & \to & M_{2}^{sp}(a,b) & \to & M_{1}(pa,qb) & \to & 0\\
 &  &  &  & \downarrow &  & \downarrow\\
 &  &  &  & 0 &  & 0
\end{array}
\]
where $\boxempty$ is cartesian, $M'$ is as before, and has invariant
$(s':t').$
\end{proof}

\subsection{Conclusion of the proof}

\subsubsection{Legitimate matrices}

We turn to the general case, and assume that
\[
U(z)=\oplus_{i=1}^{k}U_{r_{i}}(pq,0;z)=\oplus_{i=1}^{k}U_{r_{i}}(z)
\]
in block-diagonal form.
\begin{lem}
\label{lem:Legitimate}The scalar matrices commuting with $U(z)$
are the matrices which, in block form (the $(i,j)$ block being of
size $r_{i}\times r_{j}$), are of the shape
\[
E=(E_{ij})_{1\le i,j\le k}
\]
where
\[
U_{r_{i}}E_{ij}=E_{ij}U_{r_{j}}.
\]
Furthermore,
\[
E_{ij}=\left(\begin{array}{cc}
0 & E_{ij}^{*}\\
0 & 0
\end{array}\right),
\]
where $E_{ij}^{*}$ is an $s\times s$ invertible, upper triangular
matrix ($0\le s\le\min\{r_{i},r_{j}\}$) of the form 
\[
E_{ij}^{*}=\alpha\exp(\sum_{\ell=1}^{s-1}\lambda_{\ell}N_{s}^{\ell}),
\]
for some $\lambda_{\ell}\in\mathbb{C}$ and $\alpha\in\mathbb{C}^{\times}.$
Conversely, any such a matrix $E$ commutes with $U(z).$
\end{lem}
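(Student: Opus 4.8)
The plan is to reduce, in two easy steps, to a classical computation with nilpotent Jordan blocks. First, since $U(z)$ is block diagonal, writing a scalar matrix $E$ in block form $(E_{ij})$ and multiplying out shows that $E$ commutes with $U(z)$ precisely when $U_{r_i}(z)E_{ij}=E_{ij}U_{r_j}(z)$ for every pair $(i,j)$; so it suffices to treat a single block. Next I would remove the variable $z$: putting $t=\zeta(pqz,\Lambda)$, so that $U_r(z)=\exp(tN_r)=\sum_{\ell<r}t^{\ell}N_r^{\ell}/\ell!$, the expression $\exp(tN_{r_i})E_{ij}-E_{ij}\exp(tN_{r_j})$ is a matrix-valued polynomial in $t$ whose coefficient of $t^{\ell}$ is $\frac{1}{\ell!}(N_{r_i}^{\ell}E_{ij}-E_{ij}N_{r_j}^{\ell})$. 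Because $\zeta(pqz,\Lambda)$ is a non-constant meromorphic function it assumes infinitely many values, so this polynomial can vanish identically in $z$ only if all its coefficients vanish; the coefficient of $t^{1}$ gives $N_{r_i}E_{ij}=E_{ij}N_{r_j}$, and conversely that single relation propagates to all powers, hence to the exponentials. This establishes the first assertion, and also the equivalence of $U_{r_i}E_{ij}=E_{ij}U_{r_j}$ with $N_{r_i}E_{ij}=E_{ij}N_{r_j}$.

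It then remains to describe the $r_i\times r_j$ scalar solutions of $N_{r_i}E=EN_{r_j}$. Written out entrywise, with the convention that $E_{ab}=0$ whenever $a>r_i$ or $b<1$, the equation reads $E_{a+1,b}=E_{a,b-1}$; hence $E_{ab}$ depends only on $a-b$, i.e.\ $E$ is a Toeplitz matrix, and the two boundary conventions force the value on the diagonal $\{a-b=k\}$ to vanish unless $k\le\min(0,\,r_i-r_j)$. Reading this off, $E$ has its bottom $\max(0,r_i-r_j)$ rows and its leftmost $\max(0,r_j-r_i)$ columns zero, and if in addition the first few of the remaining diagonals vanish one enlarges these zero blocks; in every case $E$ acquires the asserted $2\times2$ block shape, with a single $s\times s$ upper-triangular Toeplitz block $E^{*}$ in the top-right corner, where $0\le s\le\min\{r_i,r_j\}$ equals $\min\{r_i,r_j\}$ minus the number of leading vanishing diagonals. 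When $E\ne0$ we choose $s$ so that the main diagonal of $E^{*}$ is its first non-vanishing diagonal, so $E^{*}=\alpha I+(\text{strictly upper-triangular Toeplitz})$ with $\alpha\in\mathbb{C}^{\times}$, in particular invertible. Finally an upper-triangular Toeplitz matrix is exactly a polynomial in $N_s$, and $\exp$ is a bijection from the nilpotent polynomials $\sum_{\ell=1}^{s-1}\lambda_{\ell}N_s^{\ell}$ onto the unipotent upper-triangular Toeplitz matrices (inverse: $\log$; both being truncated power series in $N_s$), so $E^{*}=\alpha\exp(\sum_{\ell=1}^{s-1}\lambda_{\ell}N_s^{\ell})$. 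The converse is immediate, since a matrix of this shape is visibly built from $N_s$, $N_{r_i}$, $N_{r_j}$ and therefore commutes with $U(z)$ by the computation of the second step.

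The argument is almost entirely formal; the one place that calls for care is the index bookkeeping just described — pinning down which diagonals are forced to vanish, uniformly in the cases $r_i\le r_j$ and $r_i\ge r_j$, and translating the outcome into the stated block normal form. One may alternatively organize this step through $\mathbb{C}[t]$-modules: $\mathbb{C}^{r}$ with $N_r$ acting is the cyclic module $\mathbb{C}[t]/(t^{r})$, generated by the last standard basis vector, so $\mathrm{Hom}$ between two such modules is the annihilator of $t^{r_j}$ inside $\mathbb{C}[t]/(t^{r_i})$; this makes the dimension $\min\{r_i,r_j\}$ transparent, though one still has to translate back into the matrix normal form above.
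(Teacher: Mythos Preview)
The paper explicitly omits the proof of this lemma, so there is nothing to compare against; your argument is correct and supplies exactly what the paper leaves out. The two reductions (to a single block via block-diagonality, then from $U_r(z)=\exp(tN_r)$ to the intertwining relation $N_{r_i}E_{ij}=E_{ij}N_{r_j}$ by the non-constancy of $t=\zeta(pqz,\Lambda)$) are the natural ones, and your Toeplitz analysis of the intertwiners is accurate, including the bookkeeping that the non-vanishing diagonals are precisely those with $a-b\le\min(0,r_i-r_j)$ and that stripping further vanishing diagonals yields the $s\times s$ block $E_{ij}^{*}$ sitting in the top-right corner. The final identification of unipotent upper-triangular Toeplitz matrices with $\exp$ of nilpotent polynomials in $N_s$ is standard and correctly stated.
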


\begin{proof}
We omit the proof.
\end{proof}
We call such matrices $E$, commuting with $U(z)$\emph{, legitimate}.

\subsubsection{Block arithmetic}

We shall investigate the consequences of the equation
\[
A(z)U(z)=U(z/p)T(z),
\]
written in a block form (block $(i,j)$ being of size $r_{i}\times r_{j}$).
Fix $(i,j)$ and write, to simplify the notation, $n=r_{i}$ and $m=r_{j}.$
We have
\begin{equation}
A_{ij}(z)U_{m}(z)=U_{n}(z/p)T_{ij}(z).\label{eq:off diagonal equation}
\end{equation}

\begin{lem}
\label{lem:off-diagonal}The $n\times m$ matrix $T_{ij}$ is scalar.
We have
\[
T_{ij}=\left(\begin{array}{cc}
0 & T_{ij}^{*}\\
0 & 0
\end{array}\right),
\]
where $T_{ij}^{*}$ is an $s\times s$ invertible, upper-triangular
matrix ($0\le s\le\min\{m,n\}$) of the form
\[
T_{ij}^{*}=\exp(-\sum_{\ell=1}^{s-1}\lambda_{\ell}N_{s}^{\ell})\cdot\alpha_{ij}T_{s}^{sp}\cdot\exp(\sum_{\ell=1}^{s-1}\lambda_{\ell}N_{s}^{\ell})
\]
 for some $\lambda_{\ell}\in\mathbb{C}$ and $\alpha_{ij}\in\mathbb{C}^{\times}.$

Similarly, with the same $s,$ $\lambda_{\ell}$ and $\alpha_{ij}$
\[
A_{ij}=\left(\begin{array}{cc}
0 & A_{ij}^{*}\\
0 & 0
\end{array}\right),
\]
where
\[
A_{ij}^{*}=\exp(-\sum_{\ell=1}^{s-1}\lambda_{\ell}N_{s}^{\ell})\cdot\alpha_{ij}A_{s}^{sp}\cdot\exp(\sum_{\ell=1}^{s-1}\lambda_{\ell}N_{s}^{\ell}).
\]
\end{lem}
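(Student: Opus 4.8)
The plan is to argue blockwise. Fix $(i,j)$, put $n=r_i$ and $m=r_j$, and recall from Corollary \ref{cor:Intermediate step} that $A_{ij}$ is a matrix of $\Lambda$-elliptic functions while $T_{ij}$, as a block of the everywhere-holomorphic matrix $T$, is entire. Since $U_n(z/p)=\exp(\zeta(qz,\Lambda)N_n)$ and $U_m(z)=\exp(\zeta(pqz,\Lambda)N_m)$, equation $(\ref{eq:off diagonal equation})$ reads $T_{ij}(z)=U_n(z/p)^{-1}A_{ij}(z)U_m(z)$, and because $q\omega,pq\omega\in\Lambda$ and $\zeta(w+\mu,\Lambda)=\zeta(w,\Lambda)+\eta(\mu,\Lambda)$, the ellipticity of $A_{ij}$ yields, for every $\omega\in\Lambda$,
\[
T_{ij}(z+\omega)=\exp(-\eta(q\omega,\Lambda)N_n)\,T_{ij}(z)\,\exp(\eta(pq\omega,\Lambda)N_m).
\]
Thus each entry of $T_{ij}$ is an entire function whose quasi-period $z\mapsto T_{ij}(z+\omega)-T_{ij}(z)$ depends on $\omega$ only.

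The key ingredient I would isolate is a rigidity statement: if $f$ is entire on $\mathbb{C}$ and $f(z+\omega)-f(z)=P(\eta(\omega,\Lambda))$ for all $\omega\in\Lambda$, where $P$ is a fixed polynomial with $P(0)=0$, then $P\equiv0$ and $f$ is constant. Indeed $f'$ is $\Lambda$-periodic and entire, hence constant, so $f$ is affine; additivity of $\omega\mapsto f(z+\omega)-f(z)$ forces $P$ to be additive on the infinite set $\{\eta(\omega,\Lambda):\omega\in\Lambda\}$, hence linear, $P(t)=\mu t$; and then $f(z+\omega)-f(z)=c\omega=\mu\eta(\omega,\Lambda)$ for all $\omega$ contradicts Legendre's relation — equivalently, that $\eta(\cdot,\Lambda)$ is not the restriction of a $\mathbb{C}$-linear form — unless $\mu=c=0$.

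Now run this entry by entry over $T_{ij}=((T_{ij})_{ab})$, processing positions $(a,b)$ with $a$ decreasing from $n$ to $1$ and, for fixed $a$, with $b$ increasing from $1$ to $m$. Writing $E_\omega=\exp(-\eta(q\omega,\Lambda)N_n)$, $F_\omega=\exp(\eta(pq\omega,\Lambda)N_m)$, the transformation law at $(a,b)$ gives $(T_{ij})_{ab}(z+\omega)-(T_{ij})_{ab}(z)=\sum_{(c,d)\neq(a,b)}(E_\omega)_{ac}(T_{ij})_{cd}(z)(F_\omega)_{db}$, the sum running over $c\ge a$, $d\le b$. By the chosen order every $(T_{ij})_{cd}$ occurring on the right has already been shown to be a constant, so the right-hand side is a polynomial in $\eta(\omega,\Lambda)$ with zero constant term; by the rigidity lemma it vanishes identically. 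Hence $(T_{ij})_{ab}$ is $\Lambda$-periodic and entire, so constant, and — comparing the coefficient of $\eta(\omega,\Lambda)$ — one gets $(T_{ij})_{a+1,b}=p\,(T_{ij})_{a,b-1}$ (out-of-range entries being $0$); in matrix terms, $N_nT_{ij}=p\,T_{ij}N_m$.

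It remains to solve this linear constraint and transfer it to $A_{ij}$. The relations $(T_{ij})_{a+1,b}=p(T_{ij})_{a,b-1}$ together with $(T_{ij})_{a,1}=0$ for $a\ge2$ and $(T_{ij})_{n,b}=0$ for $b\le m-1$ force $T_{ij}$ to be supported in the upper-right $s\times s$ corner for some $s\le\min\{m,n\}$, with $(T_{ij})_{ab}=p^{a-1}(T_{ij})_{1,b-a+1}$ there; so $T_{ij}^{*}$ is $T_s^{sp}$ times an invertible upper-triangular Toeplitz matrix, which, using $T_s^{sp}N_s^{\ell}(T_s^{sp})^{-1}=p^{-\ell}N_s^{\ell}$ and $p^{\ell}\neq1$, one rewrites in the stated form $\exp(-\sum_{\ell=1}^{s-1}\lambda_\ell N_s^\ell)\,\alpha_{ij}T_s^{sp}\,\exp(\sum_{\ell=1}^{s-1}\lambda_\ell N_s^\ell)$. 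Feeding the corner form of $T_{ij}$ into $A_{ij}(z)=U_n(z/p)T_{ij}U_m(z)^{-1}$, and noting that the relevant $s\times s$ corner submatrices of $U_n(z/p)$ and $U_m(z)^{-1}$ are $U_s(z/p)$ and $U_s(z)^{-1}$, gives $A_{ij}^{*}=U_s(z/p)T_{ij}^{*}U_s(z)^{-1}$; since the unipotent factors commute with $U_s$ this equals $\exp(-\sum\lambda_\ell N_s^\ell)\,\alpha_{ij}A_s^{sp}(z)\,\exp(\sum\lambda_\ell N_s^\ell)$ by Lemma \ref{lem:special matrices}. The genuinely delicate points are the linear-algebra step — tracking the correct corner and the value of $s$ in the regimes $n\le m$ and $n>m$, including degenerate cases with small or zero $s$ — and checking that the induction order really makes every entry appearing on the right already constant; the rest is formal. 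An alternative, in the spirit of Lemma \ref{lem:indecomposable canonical form}, would be to bootstrap directly through $A_{ij}$ using (Hol) and (Res) row by row, which reaches the same conclusion at the cost of heavier computation.
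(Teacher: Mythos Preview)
Your argument is correct and takes a genuinely different route from the paper's own proof. The paper proceeds by induction on $n+m$: in the square case it invokes Lemma~\ref{lem:Last row} and then Lemma~\ref{lem:indecomposable canonical form} (both based on the elliptic principles (Hol) and (Res) applied to the entries of $A_{ij}$), and in the rectangular case it shows that the bottom row of $T_{ij}$ vanishes and deletes it. You instead stay entirely on the $T$-side, exploit the quasi-periodicity $T_{ij}(z+\omega)=\exp(-\eta(q\omega)N_n)\,T_{ij}(z)\,\exp(\eta(pq\omega)N_m)$, and use a Liouville/Legendre rigidity lemma to force each entry of $T_{ij}$ to be constant; the single clean relation $N_nT_{ij}=pT_{ij}N_m$ then does all the structural work, and the transfer to $A_{ij}$ via $A_{ij}^{*}=U_s(z/p)T_{ij}^{*}U_s(z)^{-1}$ is immediate from Lemma~\ref{lem:special matrices}.

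What your approach buys is uniformity (no case split $n\gtrless m$, no separate invocation of Lemmas~\ref{lem:Last row} and~\ref{lem:indecomposable canonical form}) and a transparent explanation of the upper-right corner shape via the intertwining equation $N_nT_{ij}=pT_{ij}N_m$. What the paper's approach buys is economy in context: it recycles lemmas already proved for the indecomposable case, so the off-diagonal lemma becomes a short reduction rather than an independent computation. Your closing remark that one could alternatively bootstrap through $A_{ij}$ via (Hol) and (Res) is exactly the paper's method.
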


\begin{proof}
We prove the assertions on $T_{ij}$ by induction on $n+m,$ and assume
that $n\ge m,$ the other case being treated similarly. If $n=m$
all the matrices in $(\ref{eq:off diagonal equation})$ are square
of size $n\times n.$ By Lemma \ref{lem:Last row} and its corollary
we get that $A_{ij}$ and $T_{ij}$ are upper-triangular, with constants
along the diagonal. Note that the proof of that lemma did not use
the fact that $T$ and $A$ were invertible, an assumption that is
no longer valid for the \emph{blocks} of our original $A$ and $T$.

Arguing as in Lemma \ref{lem:indecomposable canonical form}, using
(Hol) and (Res), we find that the diagonal of $T_{ij}$ (equal to
the diagonal of $A_{ij}$) is of the form $\alpha(1,p,\dots,p^{n-1}).$
If $\alpha\ne0$ then $T_{ij}$ and $A_{ij}$ are invertible and Lemma
\ref{lem:indecomposable canonical form} gives us the desired form
of $T_{ij}=T_{ij}^{*}$ (in this case). If $\alpha=0$ the bottom
rows of $A_{ij}$ and $T_{ij}$ vanish, so writing $A'_{ij}$ and
$T'_{ij}$ for the matrices of size $(n-1)\times n$ obtained by deleting
the last rows of $A_{ij}$ and $T_{ij}$,
\[
A'_{ij}(z)U_{n}(z)=U_{n-1}(z/p)T'_{ij}(z),
\]
and we finish the proof by induction.

If $n>m$ it is enough to show that the bottom row of $T_{ij}$ vanishes,
because then we may use induction in the same way as we have just
done when $n$ was equal to $m$ and $\alpha$ was 0. Write
\[
T_{ij}(z)=\left(\begin{array}{c}
*\\
T_{ij}^{\dagger}(z)
\end{array}\right)
\]
where $T_{ij}^{\dagger}$ is of size $m\times m$. Apply the same
notation to $A_{ij}.$ We have
\[
A_{ij}^{\dagger}(z)U_{m}(z)=U_{m}(z/p)T_{ij}^{\dagger}(z).
\]
By the induction hypothesis,
\[
T_{ij}^{\dagger}=\left(\begin{array}{ccccc}
t & * & * & \cdots & *\\
0 & pt & * &  & *\\
\vdots & 0 & p^{2}t &  & \vdots\\
 &  & 0 & \ddots & *\\
0 &  & \cdots & 0 & p^{m-1}t
\end{array}\right).
\]
From the first entry in row $n-m$ of $(\ref{eq:off diagonal equation})$
we get
\[
a_{n-m,1}(z)\cdot1=1\cdot t_{n-m,1}(z)+\zeta(qz)\cdot t.
\]
By (Res), we must have $t=0,$ hence the bottom row of $T_{ij}$ vanishes.

Finally, the analogous statements for $A_{ij}$ follow, by block multiplication,
from the fact that $\exp(-\sum_{\ell=1}^{s-1}\lambda_{\ell}N_{s}^{\ell})$
commutes with $U_{s}(z).$
\end{proof}
Similar formulae hold of course for $S$ and $B$.

\subsubsection{Main Structure Theorem}
\begin{thm}
\label{thm:Main Structure Theorem}Let $p\ge2$ and $q\ge2$ be relatively
prime integers. Let $M$ be an elliptic $(p,q)$-difference module
of rank $r$, and let $(r_{1},\dots,r_{k})$ be its type, $r_{1}\le r_{2}\le\cdots\le r_{k},$
$\sum_{i=1}^{k}r_{i}=r.$ Let
\[
U(z)=\oplus_{i=1}^{k}U_{r_{i}}(z)=\oplus_{i=1}^{k}U_{r_{i}}(pq,0;z)
\]
in block-diagonal form. Then, in an appropriate basis, $M$ is represented
by a consistent pair $(A,B)$ of matrices from $G(K)$ for which
\[
U(z/p)^{-1}A(z)U(z)=T,\,\,\,U(z/q)^{-1}B(z)U(z)=S
\]
are commuting \emph{scalar} matrices. 

Writing $T=(T_{ij})$ and $S=(S_{ij})$ in block form, the $(i,j)$
block of size $r_{i}\times r_{j},$ $T_{ij}$ and $S_{ij}$ are then
of the form prescribed in Lemma \ref{lem:off-diagonal}.

Conversely, for any collection of scalar matrices $T_{ij}$ and $S_{ij}$
of the above form, such that $T=(T_{ij})$ and $S=(S_{ij})$ commute
and are invertible,
\[
A(z)=U(z/p)TU(z)^{-1},\,\,\,B(z)=U(z/q)SU(z)^{-1}
\]
is a consistent pair of matrices from $G(K).$

The matrices $T$ and $S$ are uniquely determined by the module $M$
up to conjugation by an invertible matrix $E$ as in Lemma \ref{lem:Legitimate}.

The pair $(A,B)$ is gauge-equivalent to a scalar pair if and only
if the type of $M$ is $(1,1,\dots1).$ In this case $U=I$ and the
above $(A,B)=(T,S)$ are already scalar.
\end{thm}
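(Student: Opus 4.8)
The plan is to assemble the theorem from what is already in place: the analytic heart has been absorbed into the Periodicity Theorem and into Lemma~\ref{lem:off-diagonal}, so most of what remains is bookkeeping, the one genuinely new point being uniqueness. For \emph{existence of the canonical form, the block shapes and $ST=TS$}: by Corollary~\ref{cor:Intermediate step}, $M$ is represented, for a suitable $\Lambda\subset\Lambda_{0}$ and an appropriate basis, by $\Lambda$-periodic matrices $A(z)=U(z/p)T(z)U(z)^{-1}$, $B(z)=U(z/q)S(z)U(z)^{-1}$ with $T(z),S(z)$ everywhere holomorphic with holomorphic inverse. I would write $A(z)U(z)=U(z/p)T(z)$ block by block, the $(i,j)$ block of size $r_{i}\times r_{j}$, and apply Lemma~\ref{lem:off-diagonal} to it (and the symmetric statement to $B(z)U(z)=U(z/q)S(z)$); this shows at once that $T(z)=T$, $S(z)=S$ are \emph{constant} and that each $T_{ij}$, $S_{ij}$ has the normal form of that lemma. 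Commutativity then drops out of the consistency equation $(\ref{eq:consistency_equation})$: substituting the canonical forms into $B(z/p)A(z)=A(z/q)B(z)$ makes the outer factors $U(z/pq)$ and $U(z)^{-1}$ cancel --- this is where it is essential that $T,S$ no longer depend on $z$ --- leaving $STU(z)^{-1}=TSU(z)^{-1}$, i.e.\ $ST=TS$.

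For \emph{the converse}: given commuting invertible scalar matrices $T,S$ whose blocks have the form of Lemma~\ref{lem:off-diagonal}, I put $A(z)=U(z/p)TU(z)^{-1}$, $B(z)=U(z/q)SU(z)^{-1}$. These are invertible wherever defined, and by the block computation behind Lemma~\ref{lem:off-diagonal} (which is driven by Lemma~\ref{lem:special matrices}) the diagonal block $A_{ii}$ is a constant unipotent conjugate of $\alpha_{ii}A_{r_{i}}^{sp}(z)$ and each off-diagonal block $A_{ij}$ is, inside an $s\times s$ corner, a constant unipotent conjugate of $\alpha_{ij}A_{s}^{sp}(z)$, so all entries are polynomials in $g_{p}(z)\in K_{\Lambda}$, and symmetrically for $B$ with $g_{q}$; hence $A,B\in G(K)$. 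The consistency equation $(\ref{eq:consistency_equation})$ follows by running the cancellation of the previous paragraph backwards, using $ST=TS$. So $(A,B)$ is a consistent pair from $G(K)$.

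\emph{Uniqueness} is the step I expect to be the main obstacle. Suppose $(A,B)$, $(A',B')$ are two pairs in canonical form, with data $(T,S)$, $(T',S')$ and the same $U$ (they represent isomorphic modules, so have the same type), gauge equivalent over $K$ via $C\in G(K)$. First I would reduce $A,A'$ over $\widehat K$ to $p$-restricted scalar forms $A_{0},A_{0}'$ by the normalized matrices $C_{A},C_{A'}\equiv I\bmod z$; by the Corollary preceding \S\ref{subsec:Two extreme cases} these factor as $C_{A}=UD_{A}$, $C_{A'}=UD_{A'}$ with $D_{A},D_{A'}$ holomorphic and invertible away from $0$. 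The two pairs are also gauge equivalent over $\widehat K$, so Theorem~\ref{thm:Formal main theorem} and a Laurent-coefficient argument as in Lemma~\ref{lem:uniqueness} force $C_{A}^{-1}CC_{A'}$ to be a constant matrix $P_{0}$; hence $\widetilde C(z):=U(z)^{-1}C(z)U(z)=D_{A}(z)P_{0}D_{A'}(z)^{-1}$ is holomorphic and invertible away from $0$, it satisfies the constant-coefficient difference equations
\[
\widetilde C(z/p)=T\,\widetilde C(z)\,(T')^{-1},\qquad \widetilde C(z/q)=S\,\widetilde C(z)\,(S')^{-1}
\]
obtained by substituting the canonical forms, and --- using the ellipticity of $C$ and $U(z+\lambda)=U(z)E_{\lambda}$ with $E_{\lambda}=\oplus_{i}\exp(\eta(pq\lambda,\Lambda)N_{r_{i}})$ --- it is quasi-periodic, $\widetilde C(z+\lambda)=E_{\lambda}^{-1}\widetilde C(z)E_{\lambda}$. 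A Laurent-expansion analysis at $0$ in the spirit of the slope-$0$ argument of \S\ref{sec:Formal--difference-modules}, using these difference equations together with the periodicity of the $E_{\lambda}$-twist, then forces $\widetilde C$ to be a constant invertible matrix. Such a matrix commutes with every $E_{\lambda}=\exp(\eta(pq\lambda,\Lambda)N)$, $N=\oplus_{i}N_{r_{i}}$; since $\lambda\mapsto\eta(pq\lambda,\Lambda)$ is a nonzero homomorphism $\Lambda\to\mathbb{C}$, it commutes with $\exp(tN)$ for infinitely many $t$, hence with $N$, hence with $U(z)$ --- that is, $\widetilde C$ is legitimate in the sense of Lemma~\ref{lem:Legitimate}, and the difference equations give $T'=\widetilde C^{-1}T\widetilde C$, $S'=\widetilde C^{-1}S\widetilde C$. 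Conversely, for legitimate $E$, commutativity with $U$ gives $U(z/p)(E^{-1}TE)U(z)^{-1}=E^{-1}A(z)E$, so $(E^{-1}TE,E^{-1}SE)$ yields a pair gauge equivalent to $(A,B)$ by the constant $E\in G(\mathbb{C})\subset G(K)$. The delicate point is squeezing constancy of $\widetilde C$ out of its behaviour as $z\to0$.

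Finally, for \emph{the last assertion}: if the type is $(1,\dots,1)$ then every $r_{i}=1$, so $N_{r_{i}}=0$, $U_{r_{i}}(z)=\exp(0)=1$, $U(z)=I$, and $(A,B)=(T,S)$ is already a commuting scalar pair. Conversely, a scalar pair has trivial associated bundle --- its formal reduction matrix to a $p$-restricted scalar form is a matrix of Laurent monomials and constant matrices, hence holomorphic and invertible away from $0$, so the associated section is trivial --- so if $(A,B)$ is gauge equivalent to a scalar pair then $\mathcal{E}\simeq\mathcal{O}_{X_{\Lambda}}^{r}$, and by the uniqueness of the decomposition into indecomposables (Theorem~\ref{thm:Atiyah}) the identity $\bigoplus_{i}\mathcal{F}_{r_{i}}\simeq\mathcal{O}_{X_{\Lambda}}^{r}$ forces every $r_{i}=1$.
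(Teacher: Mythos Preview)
Your treatment of the existence, the converse, and the last assertion is correct and matches the paper's argument almost verbatim.

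For uniqueness, however, you take a considerably more circuitous route than necessary, and the detour introduces some loose ends. There is no need to pass through the formal reductions $C_{A},C_{A'}$ to $p$-restricted form, nor to invoke Theorem~\ref{thm:Formal main theorem} or the Corollary preceding \S\ref{subsec:Two extreme cases} (which, incidentally, allows replacing $(A,B)$ by a gauge-equivalent pair, so its direct application here is not clean). The paper proceeds much more directly: if $C\in G(K)$ is the gauge change between two canonical pairs with data $(T,S)$ and $(T',S')$, set $\widetilde C(z)=U(z)^{-1}C(z)U(z)$; substituting the canonical forms into the gauge relation gives immediately
\[
\widetilde C(z/p)=T\,\widetilde C(z)\,(T')^{-1}.
\]
Expanding $\widetilde C=\sum_i D_i z^i$ at $0$ yields $TD_i(T')^{-1}=p^{-i}D_i$; since the linear map $M\mapsto TM(T')^{-1}$ on $M_r(\mathbb C)$ has only finitely many eigenvalues, only finitely many $D_i$ survive and $\widetilde C$ is a Laurent polynomial. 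Now use that $C$ is elliptic: from $U(z+\lambda)=U(z)E_\lambda$ one gets $\widetilde C(z+\lambda)=E_\lambda^{-1}\widetilde C(z)E_\lambda$; comparing poles shows $\widetilde C$ has no negative powers, and comparing the top two coefficients, together with the Legendre relation (so that $\lambda\mapsto\lambda$ and $\lambda\mapsto\eta(pq\lambda)$ are not proportional), forces $\widetilde C$ to be constant. Constancy plus commutation with every $E_\lambda$ then gives $[\widetilde C,N]=0$, hence $C=U\widetilde C U^{-1}=\widetilde C$ is a legitimate scalar matrix conjugating $(T,S)$ to $(T',S')$. What you flagged as ``the delicate point'' is in fact the most elementary step: a single Laurent-coefficient comparison against a fixed linear map, with no slope-$0$ machinery required.
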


\begin{proof}
The results obtained so far yield the first (direct) part of the theorem.
For the converse, note that if $T$ and $S$ are invertible and commute,
then $A$ and $B$ are invertible and satisfy the consistency equation.
Lemma \ref{lem:off-diagonal} shows that their entries are elliptic
functions, i.e. they belong to $G(K).$

A change of basis of $M$ results in a gauge transformation replacing
$T$ and $S$ by $C(z/p)^{-1}TC(z)$ and $C(z/q)^{-1}SC(z)$. If these
are constant, say $T'$ and $S'$, then
\[
C(z/p)=TC(z)T'^{-1}.
\]
Expanding $C$ at the origin as a Laurent expansion we see that $C$
must be a Laurent polynomial, since $M\mapsto TMT'^{-1}$ can have
only finitely many eigenvalues on $M_{r}(\mathbb{C}).$ Since the
entries of $C$ are elliptic, we deduce that $C$ is scalar. But $C$
must commute with $U$ too so it must be a legitimate matrix.

Finally, if the type is $(1,1,\dots,1)$ then $U=I$ and $(A,B)=(T,S).$
On the other hand, a module $M$ admitting a $\mathbb{C}$-structure
gives rise to the trivial vector bundle $\mathcal{E},$ so its type
must be $(1,1,\dots,1).$
\end{proof}

\subsection{Simple elliptic $(p,q)$-difference modules}

If the type of $M$ is $(r)$ we have seen that $M$ is a successive
extension of 1-dimensional modules. The same is true if the type is
$(1,1,\dots,1)$ because any two commuting scalar matrices can be
brought into triangular forms with respect to the same basis.
\begin{problem}
Is it true that any simple elliptic $(p,q)$-difference module is
$1$-dimensional?
\end{problem}

In Proposition \ref{prop:Rank 3 modules} we have analyzed also modules
of type $(2,1)$, and it follows that the answer to our question is
positive in rank $\le3.$

In general, the question is the following. Fix a type $(r_{1},r_{2},\dots,r_{k}).$
Given commuting invertible matrices $T$ and $S$ in block form as
in Lemma \ref{lem:off-diagonal}, does there exist an invertible legitimate
matrix $E$ as in Lemma \ref{lem:Legitimate} such that $ETE^{-1}$
and $ESE^{-1}$ are simultaneously upper-triangular?

We shall not pursue this question here, although it need not be too
difficult.

\section{An elliptic analogue of the conjecture of Loxton and van der Poorten}

\subsection{The conjecture of Loxton and van der Poorten and its additive analogue}

Let $K=\mathbb{C}(x^{1/s}|\,s\in\mathbb{N})$. Let $p$ and $q$ be
multiplicatively independent natural numbers. Define $\sigma,\tau\in Aut(K)$
by
\[
\sigma f(x)=f(x^{p}),\,\,\,\tau f(x)=f(x^{q}).
\]
Extend the definition to the field of Puiseux series $\mathcal{K}=\bigcup_{s\ge1}\mathbb{C}((x^{1/s}))$
(this field is not complete; it is the algebraic closure of $\mathbb{C}((x))$).
The following theorem was conjectured by Loxton and van der Poorten
\cite{vdPo} and proved by Adamczewski and Bell \cite{Ad-Be}. The
proof was based on Cobham's theorem in the theory of automata \cite{Co},
and was quite intricate. Schäfke and Singer supplied a more conceptual
proof in \cite{Sch-Si}, which in turn yields an elegant proof of
Cobham's theorem.
\begin{thm*}
\cite{Ad-Be,Sch-Si} Assume that $f\in\mathcal{K}$ satisfies the
two $(p,q)$-Mahler equations
\[
\begin{cases}
\begin{array}{cc}
a_{0}\sigma^{n}(f)+\cdots+a_{n-1}\sigma(f)+a_{n}f=0\\
b_{0}\tau^{m}(f)+\cdots+b_{m-1}\tau(f)+b_{m}f=0
\end{array}\end{cases}
\]
with coefficients $a_{i},b_{j}\in K$. Then $f\in K$.
\end{thm*}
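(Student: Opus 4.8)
The plan is to deduce this from the theorem of Sch\"afke and Singer (Theorem~\ref{thm:Schafke-Singer Theorem}, Case 2Q), exactly along their lines. After replacing the two equations by their lowest-order nontrivial versions we may assume $a_{0}\ne 0$ and $b_{0}\ne 0$. First I would attach to $f$ the $K$-subspace $V\subseteq\mathcal K$ spanned by all the $\sigma^{i}\tau^{j}f$, $i,j\ge 0$. Applying $\tau^{j}$ to the $p$-Mahler equation and using $\sigma\tau=\tau\sigma$ shows that $\tau^{j}f$ again satisfies a $p$-Mahler equation of order $n$ (with coefficients $\tau^{j}(a_{i})$), so $\sigma^{i}\tau^{j}f$ lies in the $K$-span of $\tau^{j}f,\dots,\sigma^{n-1}\tau^{j}f$ for every $i\ge 0$; symmetrically in $j$. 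Hence $V$ is spanned over $K$ by the finitely many vectors $\sigma^{i}\tau^{j}f$ with $0\le i<n$, $0\le j<m$; it is finite-dimensional, and it is stable under $\sigma$ and $\tau$. Since $\sigma,\tau\in\mathrm{Aut}(\mathcal K)$ restrict to injective $\sigma$-linear and $\tau$-linear endomorphisms of the finite-dimensional space $V$, they restrict to automorphisms of $V$, and they commute; so $(V,\sigma|_{V},\tau|_{V})$ is a $(p,q)$-difference module over $(K,\sigma,\tau)$ in the sense of the introduction.

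Now I would apply Theorem~\ref{thm:Schafke-Singer Theorem}: $V$ has a $K$-basis $w_{1},\dots,w_{r}$ in which $\sigma$ and $\tau$ are given by commuting matrices $(a_{ij}),(b_{ij})\in GL_{r}(\mathbb C)$, i.e. $\sigma(w_{i})=\sum_{j}a_{ji}w_{j}$ and $\tau(w_{i})=\sum_{j}b_{ji}w_{j}$. The decisive observation is that the $w_{i}$ are genuine elements of $V\subseteq\mathcal K$, and being $K$-linearly independent they are a fortiori $\mathbb C$-linearly independent; thus $W:=\mathbb C w_{1}+\dots+\mathbb C w_{r}$ is an $r$-dimensional $\mathbb C$-subspace of $\mathcal K$. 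Because the coefficients $a_{ij}$ lie in the fixed field $\mathbb C$, the operator $\sigma$ carries $W$ into itself and acts $\mathbb C$-linearly there. (Only this much of the Sch\"afke--Singer conclusion will be used.)

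The crux is then the elementary fact that a finite-dimensional $\mathbb C$-subspace $W\subseteq\mathcal K$ with $\sigma(W)\subseteq W$ must be contained in $\mathbb C$. For $0\ne g\in W$ let $v(g)$ be the order of its Puiseux expansion at $0$; since the Mahler operator multiplies exponents by $p$, one has $v(\sigma^{k}g)=p^{k}v(g)$. If $v(g)<0$ then $v(\sigma^{k}g)\to-\infty$, impossible since valuations of nonzero vectors in a finite-dimensional subspace are bounded below. If $v(g)>0$ then $v(\sigma^{k}g)=p^{k}v(g)$ is strictly increasing, and a nontrivial $\mathbb C$-combination of $g,\sigma g,\dots,\sigma^{k-1}g$ has valuation $p^{i_{0}}v(g)$ for some $i_{0}<k$, hence cannot equal $\sigma^{k}g$; so $g,\sigma g,\sigma^{2}g,\dots$ are linearly independent, again impossible. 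Thus $v(g)=0$ for every nonzero $g\in W$. Finally, if some $g\in W$ were not constant, take its smallest positive exponent $a_{1}$, with coefficient $c_{1}\ne 0$; then $\sigma(g)-g\in W$, its constant terms cancel, and — since $a_{1}/p$ is not an exponent of $g$ — the coefficient of $x^{a_{1}}$ in $\sigma(g)-g$ equals $-c_{1}\ne 0$, so $v(\sigma(g)-g)=a_{1}>0$, contradicting what was just shown. Hence $W\subseteq\mathbb C$.

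Applying this with $W$ as above gives $r=\dim_{\mathbb C}W\le 1$. Since $f=0$ is trivial and $f\ne 0$ forces $r=1$, we obtain $V=Kw_{1}$ with $w_{1}\in\mathbb C^{\times}$, so $V=K$ and $f\in V=K$. The only serious input is Theorem~\ref{thm:Schafke-Singer Theorem} (which is where multiplicative independence of $p$ and $q$ is used); everything else is routine, and the single place demanding a little care is the valuation bookkeeping of the third paragraph, which is exactly where $p\ge 2$ enters.
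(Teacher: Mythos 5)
Your argument is correct, and it is essentially the standard derivation: the paper itself does not prove this theorem (it only cites \cite{Ad-Be,Sch-Si}), but your proof runs exactly parallel to the paper's proof of its elliptic analogue in the final section, where one forms the finite-dimensional module $M$ spanned by the $\sigma^{i}\tau^{j}f$, applies the relevant structure theorem, and then shows that the resulting ``constant'' basis vectors, being actual elements of the ambient field, must be of a very restricted form. Two small points. First, the input you need is the \emph{Mahler} case of Sch\"afke--Singer (a $(p,q)$-Mahler module over $\mathbb{C}(x^{1/s}\mid s\in\mathbb{N})$ descends to $\mathbb{C}$); Theorem~\ref{thm:Schafke-Singer Theorem} as stated in the paper, with the label ``Case 2Q'', is the dilation case on $\mathbb{C}(x)$, so strictly you are invoking the parallel Mahler case of \cite{Sch-Si} (described in the abstract but not numbered in the paper). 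Second, your closing lemma --- that a finite-dimensional $\mathbb{C}$-subspace $W\subseteq\mathcal{K}$ with $\sigma(W)\subseteq W$ lies in $\mathbb{C}$ --- is correct as written and plays the role of the paper's Laurent-polynomial lemma in the elliptic setting; your valuation bookkeeping (orders bounded below, $v(\sigma^{k}g)=p^{k}v(g)$, and the $\sigma(g)-g$ trick for $v(g)=0$) is sound and avoids the triangularization the paper uses, even yielding the stronger conclusion $r\le 1$, which is more than is needed since $W\subseteq\mathbb{C}\subseteq K$ already gives $f\in V\subseteq K$.
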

It is easy to use Galois descent to derive from the above a similar
statement when the pair $(K,\mathcal{K})$ is replaced by $(\mathbb{C}(x),\mathbb{C}((x))).$
The advantage of working with $K$ and $\mathcal{K}$ as in our formulation
is that $\sigma$ and $\tau$ are automorphisms, and not merely endomorphisms,
of these fields.

Theorem \ref{thm:Schafke-Singer Theorem}, mentioned in the introduction,
has a similar consequence for a Laurent power series satisfying a
pair of $q$-difference equations. Let the notation be as in Theorem
\ref{thm:Schafke-Singer Theorem}. In particular $K=\mathbb{C}(x)$
now, and we let $\widehat{K}=\mathbb{C}((x)).$
\begin{thm*}
\cite{Bez-Bou,Sch-Si} Assume that $f\in\widehat{K}$ satisfies the
two $(p,q)$-difference equations
\[
\begin{cases}
\begin{array}{cc}
a_{0}\sigma^{n}(f)+\cdots+a_{n-1}\sigma(f)+a_{n}f=0\\
b_{0}\tau^{m}(f)+\cdots+b_{m-1}\tau(f)+b_{m}f=0
\end{array}\end{cases}
\]
with coefficients $a_{i},b_{j}\in K$. Then $f\in K$.
\end{thm*}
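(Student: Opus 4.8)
The plan is to reduce the statement to Theorem~\ref{thm:Schafke-Singer Theorem} by the usual device of passing from the two equations to the $(p,q)$-difference module they generate. Once that module is known to be defined over $\mathbb C$, the only work left is to check that its $\mathbb C$-structure is in fact visible over $K=\mathbb C(x)$, which comes down to describing the solutions of the scalar dilation difference equations in $\mathbb C((x))$.

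Assuming $f\neq 0$, I would first form $V\subseteq\widehat K$, the $K$-linear span of all $\sigma^i\tau^j f$ with $i,j\ge 0$. Because $f$ satisfies a nontrivial $p$-difference equation (leading coefficient nonzero), the $K$-span of the $\sigma^i f$ is finite dimensional; applying $\sigma^i$ to the second equation and using $\sigma\tau=\tau\sigma$ shows in the same way that in fact $V=\sum_{i<n,\,j<m}K\,\sigma^i\tau^j f$ is finite dimensional. By construction $V$ is stable under $\sigma$ and under $\tau$ (recall $\sigma(K)=K=\tau(K)$), and $\sigma|_V$, $\tau|_V$ are injective and semilinear over the automorphisms $\sigma,\tau$ of $K$, hence bijective on the finite-dimensional $V$. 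Thus $(V,\sigma|_V,\tau|_V)$ is a rational $(p,q)$-difference module over $K=\mathbb C(x)$ with $f\in V$, and Theorem~\ref{thm:Schafke-Singer Theorem} applies: there is a $K$-basis $e_1,\dots,e_r$ of $V$ with $\sigma(e_j)=\sum_i\tilde A_{ij}e_i$ and $\tau(e_j)=\sum_i\tilde B_{ij}e_i$ for commuting $\tilde A,\tilde B\in GL_r(\mathbb C)$ --- identities that now hold \emph{inside} $\widehat K$.

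It remains to show $V\subseteq K$; since $V=K\cdot V_0$ with $V_0=\sum_i\mathbb C e_i$, this gives $f\in K$. As $\sigma$ and $\tau$ fix $\mathbb C$, they preserve $V_0$ and act on it $\mathbb C$-linearly via $\tilde A,\tilde B$. I would decompose $V_0$ into the generalised eigenspaces $V_0^{[\alpha]}$ of $\tilde A$ (equivalently of $\sigma|_{V_0}$), fix $\alpha$ with $V_0^{[\alpha]}\neq 0$, and choose an honest eigenvector $0\neq v\in V_0^{[\alpha]}$, $\sigma(v)=\alpha v$. Reading this as $v(x/p)=\alpha v(x)$ in $\mathbb C((x))$ and comparing coefficients --- the $p^{-k}$ ($k\in\mathbb Z$) are pairwise distinct since $|p|>1$ --- forces $v=c\,x^{n}$ with $\alpha=p^{-n}$, $n\in\mathbb Z$. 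For any $w\in V_0^{[\alpha]}$ put $w'=x^{-n}w\in\widehat K$; a direct computation gives $(\sigma-1)^k(w')=p^{nk}\,x^{-n}(\sigma-\alpha)^k(w)$, so $(\sigma-1)^{r}(w')=0$. But in $\mathbb C((x))$ the only solutions of $(\sigma-1)^k y=0$ are the constants: by induction on $k$, a constant lying in the image of $\sigma-1$ must vanish (inspect the coefficient of $x^0$, again using $|p|>1$), so $(\sigma-1)y$ constant forces $(\sigma-1)y=0$ and then $y$ constant. Hence $w'\in\mathbb C$, i.e. $w\in\mathbb C\,x^{n}$, so $\dim_{\mathbb C}V_0^{[\alpha]}\le 1$ and $V_0^{[\alpha]}=\mathbb C\,x^{n}\subseteq K$; summing over $\alpha$ gives $V_0\subseteq K$, whence $f\in V\subseteq K$.

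The one genuinely hard ingredient is Theorem~\ref{thm:Schafke-Singer Theorem} itself; multiplicative independence of $p$ and $q$ is used only through it, and plays no role in the construction of $V$ or in the rationality argument, where only $|p|>1$ matters. I do not expect a serious obstacle beyond this: the passage from an abstract $\mathbb C$-structure to rationality rests merely on the explicit shape, in $\mathbb C((x))$, of the solutions of the scalar dilation equations $\sigma(y)=\alpha y$ (a single monomial when $\alpha$ is an integral power of $p$, and $0$ otherwise) and $(\sigma-1)^k y=0$ (only constants). It is precisely this last point that fails in the elliptic setting of the present paper, where a general module acquires a $\mathbb C$-structure only when the associated bundle $\mathcal{E}$ is trivial, and otherwise $\zeta(z,\Lambda)$ must intervene.
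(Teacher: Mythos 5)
The paper does not actually prove this theorem --- it quotes it from \cite{Bez-Bou,Sch-Si} --- but your argument is correct and is essentially the argument the paper itself uses for its elliptic analogue in the final section, specialized to the case $U=I$: form the finite-dimensional module $V$ generated by the $\sigma^{i}\tau^{j}f$, invoke the structure theorem (here Theorem \ref{thm:Schafke-Singer Theorem}), and then show that a vector of Laurent series on which $\sigma$ acts through an invertible scalar matrix must consist of Laurent polynomials. Your treatment of that last step via generalized eigenspaces and the equation $(\sigma-1)^{k}y=0$ is a harmless variant of the triangularization-plus-induction lemma the paper proves for the elliptic case.
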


\subsection{An elliptic analogue}

We shall now derive from Theorem \ref{thm:Main} an elliptic analogue
of the above two theorems.

Let $K=\bigcup K_{\Lambda}$ be as before, where $K_{\Lambda}$ is
the field of $\Lambda$-elliptic functions, and $\Lambda$ runs over
all the sublattices of a fixed lattice $\Lambda_{0}\subset\mathbb{C}$.
Let $R$ be the ring generated over $K$ by the functions $z,z^{-1}$
and $\zeta(z,\Lambda)$ for all $\Lambda$ as above. Thus
\[
R=\bigcup R_{\Lambda},\,\,\,R_{\Lambda}=K_{\Lambda}[z,z^{-1},\zeta(z,\Lambda)].
\]
Note that $n\zeta(z,\Lambda)-\zeta(nz,\Lambda)\in K_{\Lambda},$ so
instead of $\zeta(z,\Lambda)$ we could have taken $\zeta(nz,\Lambda)$
for any $n\ge1.$ Note also that if $\Lambda'\subset\Lambda$ then
\[
\wp(z,\Lambda)-\sum_{\omega\in\Lambda/\Lambda'}\wp(z+\omega,\Lambda')
\]
is $\Lambda$-periodic and everywhere holomorphic, hence it is constant.
Integrating we get that
\[
\zeta(z,\Lambda)-\sum_{\omega\in\Lambda/\Lambda'}\zeta(z+\omega,\Lambda')=az+b
\]
for some $a,b\in\mathbb{C}.$ Since $\zeta(z+\omega,\Lambda')-\zeta(z,\Lambda')\in K_{\Lambda'}$
we get that
\[
\zeta(z,\Lambda)-[\Lambda:\Lambda']\zeta(z,\Lambda')\in K[z,z^{-1}].
\]
We conclude that it is enough to adjoin $\zeta(z,\Lambda)$ for one
lattice, i.e.
\[
R=K[z,z^{-1},\zeta(z,\Lambda_{0})].
\]

Let $p$ and $q$ be \emph{relatively prime} integers greater than
1, and, as before, define $\sigma,\tau\in Aut(K)$ by
\[
\sigma f(z)=f(z/p),\,\,\,\tau f(z)=f(z/q).
\]

Let $\widehat{K}=\mathbb{C}((z))$ and extend $\sigma$ and $\tau$
to $\widehat{K}$. We regard $R$ as a subring of $\widehat{K}$,
associating to any $f\in R$ its Laurent expansion at 0.
\begin{thm}
Let $f\in\widehat{K}$ satisfy
\[
\begin{cases}
\begin{array}{cc}
a_{0}\sigma^{n}(f)+\cdots+a_{n-1}\sigma(f)+a_{n}f=0\\
b_{0}\tau^{m}(f)+\cdots+b_{m-1}\tau(f)+b_{m}f=0
\end{array}\end{cases}
\]
where $a_{i}$ and $b_{j}\in K$. Then $f\in R$.
\end{thm}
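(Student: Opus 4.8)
The plan is to reduce the statement about a single formal solution $f$ to the Structure Theorem by packaging the two scalar difference equations into an elliptic $(p,q)$-difference module, as in the work of Sch\"afke--Singer.

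\textbf{Step 1: Companion module.} Starting from the two equations, form the cyclic $p$-difference module $M$ over $K$ generated by $f,\sigma f,\dots,\sigma^{n-1}f$ (with the obvious $\sigma$-action coming from the first equation), and observe that the second equation exhibits a commuting $\tau$-action on the same space: applying $\tau$ to the first equation and using that $\sigma,\tau$ commute shows that $\tau^j f$ again satisfies a $p$-difference equation, and after enlarging the module to contain all $\sigma^i\tau^j f$ (a finite-dimensional $K$-space, because each generator satisfies the first recursion in the $\sigma$-direction and the second in the $\tau$-direction) one gets a finite-dimensional $K$-vector space $M$ carrying commuting bijective $\sigma$- and $\tau$-semilinear operators $\Phi_\sigma,\Phi_\tau$. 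Thus $M$ is an elliptic $(p,q)$-difference module in the sense of the paper, and $f$ is a coordinate of a common ``solution vector'' $e\in\widehat K\otimes_K M$ with $\Phi_\sigma e = e$, $\Phi_\tau e = e$ (after the usual dualization to turn the operators into the matrices $A^{-1},B^{-1}$). More precisely, $f$ appears as an entry of a vector $v\in\widehat K^r$ satisfying $\sigma(v)=Av$ and $\tau(v)=Bv$ for the consistent pair $(A,B)$ representing $M$.

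\textbf{Step 2: Apply the Structure Theorem.} By Theorem~\ref{thm:Main}/\ref{thm:Main Structure Theorem}, after a gauge transformation $C\in G(K)$ we may assume $A(z)=U(z/p)TU(z)^{-1}$ and $B(z)=U(z/q)SU(z)^{-1}$ with $T,S$ commuting scalar matrices and $U(z)=\bigoplus_i U_{r_i}(pq,0;z)=\bigoplus_i\exp(\zeta(pqz,\Lambda)N_{r_i})$. The gauge transformation replaces $v$ by $C(z)^{-1}v$, whose entries lie in $R$ if and only if the original entries do (since $C$ and $C^{-1}$ have entries in $K\subset R$ and $R$ is a ring); so it suffices to prove that the solution vector of the \emph{normalized} system has entries in $R$. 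Setting $w(z)=U(z)^{-1}v(z)$, the equations become $\sigma(w)=T w$ and $\tau(w)=S w$ with constant $T,S$: here one must check $\sigma(U(z)^{-1}v)=U(z/p)^{-1}\sigma(v)=U(z/p)^{-1}A v = T U(z)^{-1}v$, using $\sigma(\zeta(pqz,\Lambda))=\zeta(pqz/p,\Lambda)$ so that $\sigma(U(z))=U(z/p)$.

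\textbf{Step 3: Solve the constant-coefficient system, then transform back.} Now $w\in\widehat K^r$ satisfies $w(z/p)=Tw(z)$ and $w(z/q)=Sw(z)$ with constant commuting invertible $T,S$. Decompose $w=\sum_i M_i z^i$ in Laurent series: $w(z/p)=Tw(z)$ forces $p^{-i}M_i = TM_i$, so each $M_i$ lies in the $p^{-i}$-eigenspace of $T$, and likewise of $S$ for $q^{-i}$; since $T$ has finitely many eigenvalues and $p,q>1$ are multiplicatively independent, only finitely many $i$ occur and for those $i$ the coefficient $M_i$ is forced to be constant, i.e.\ $w$ is a Laurent \emph{polynomial} in $z$ with constant vector coefficients. (This is the same argument used in the proof of Theorem~\ref{thm:Main Structure Theorem} to show a conjugating $C$ must be a Laurent polynomial.) Hence $w\in\mathbb C[z,z^{-1}]^r\subset R^r$. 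Transforming back, $v(z)=U(z)w(z)$, and every entry of $U(z)=\bigoplus\exp(\zeta(pqz,\Lambda)N_{r_i})$ is a polynomial in $\zeta(pqz,\Lambda)$ with constant coefficients, hence lies in $R_\Lambda\subseteq R$ (recall $\zeta(pqz,\Lambda)-pq\,\zeta(z,\Lambda)\in K_\Lambda$, so adjoining $\zeta(pqz,\Lambda)$ is the same as adjoining $\zeta(z,\Lambda)$). Therefore $v\in R^r$, and in particular $f\in R$.

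\textbf{Main obstacle.} The real work is Step~1: verifying carefully that the two scalar difference equations in a \emph{single unknown} $f$ genuinely assemble into a \emph{module} over the rank-2 group $\Gamma=\langle\sigma,\tau\rangle$, i.e.\ that the space spanned by $\{\sigma^i\tau^j f\}$ is finite-dimensional over $K$ and that $\Phi_\sigma,\Phi_\tau$ are bijective on it. Bijectivity requires that the leading coefficients $a_0,b_0$ be invertible in $K$, which after dividing through we may assume; finite-dimensionality is the Ore-type argument that a function annihilated by an order-$n$ $\sigma$-operator and an order-$m$ $\tau$-operator generates a space of dimension at most $nm$. Once this bookkeeping is in place, and once one has correctly tracked how the solution vector transforms under gauge equivalence and under the conjugation by $U(z)$, the rest is the short eigenvalue computation of Step~3. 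A minor technical point, flagged already in the paper, is the modification at $0$ in the Periodicity Theorem, which is exactly why $z^{\pm1}$ must be included in $R$; this is already absorbed into the statement of Theorem~\ref{thm:Main Structure Theorem} and needs no extra argument here.
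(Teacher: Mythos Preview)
Your proof is correct and follows essentially the same route as the paper: build the $(p,q)$-difference module $M\subset\widehat K$ spanned by the $\sigma^i\tau^j f$, apply the Structure Theorem to put $(A,B)$ in the form $U(z/p)TU(z)^{-1},\,U(z/q)SU(z)^{-1}$, conjugate by $U$ to reduce to a constant-coefficient system whose solutions are Laurent polynomials by the eigenvalue argument, and transform back using that the entries of $U$ lie in $R$. The paper packages your Step~3 as a separate lemma (via triangularizing $T$) and works with the basis vectors $g_i\in M$ directly rather than a ``solution vector,'' but the content is identical.
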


\begin{rem}
(i) As mentioned before, we do not know if the theorem remains true
under the weaker hypothesis that $p$ and $q$ are only multiplicatively
independent.

(ii) The theorem is equivalent to the same theorem with $\sigma^{-1}f(z)=f(pz)$
and $\tau^{-1}f(z)=f(qz)$ replacing $\sigma$ and $\tau$. It may
also be phrased as saying that if all the $a_{i},b_{j}\in K_{\Lambda}$
then there exists a $\Lambda'\subset\Lambda$ such that $f\in R_{\Lambda'}.$

(iii) One may ask for the relation between $\Lambda$ and $\Lambda'$.
To be precise, suppose (using the equivalent formulation with $\sigma^{-1}$
and $\tau^{-1}$) that
\[
\begin{cases}
\begin{array}{cc}
a_{0}(z)f(p^{n}z)+\cdots+a_{n-1}(z)f(pz)+a_{n}(z)f(z)=0\\
b_{0}(z)f(q^{m}z)+\cdots+b_{m-1}(z)f(qz)+b_{m}(z)f(z)=0,
\end{array}\end{cases}
\]
where $a_{i}$ and $b_{j}$ are $\Lambda$-periodic. What is the largest
lattice $\Lambda'$ such that $f\in R_{\Lambda'}$? A more careful
examination of our proof may shed light on this question.

(iv) The collection of all $f\in\widehat{K}$ satisfying an elliptic
$p$-difference equation and a similar $q$-difference equation simultaneously,
is easily seen to be a \emph{subring} $R_{0}$ of $\widehat{K},$
containing $K$. It contains $z^{\pm1},$ hence all Laurent polynomials.
If $f=\zeta(z,\Lambda)$ then $p\sigma(f)-f\in K$ and similarly $q\tau(f)-f\in K$.
It follows that $\zeta(z,\Lambda)\in R_{0}$ as well. Thus $R_{0}=R$
and our theorem is optimal.
\end{rem}

\begin{lem}
Let $h_{i}\in\widehat{K}$ ($1\le i\le r$) and assume that there
is a matrix $T^{-1}=(t_{ij})\in G(\mathbb{C})$ such that
\[
h_{j}(z/p)=\sum_{i=1}^{r}t_{ij}h_{i}(z).
\]
Then every $h_{i}\in\mathbb{C}[z^{-1},z]$ is a Laurent polynomial.
\end{lem}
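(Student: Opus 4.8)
The plan is to use the hypothesis as a finite-dimensional invariance condition on the span of the $h_i$ and combine it with the fact that a Laurent series fixed (up to a constant factor) by the substitution $z\mapsto z/p$ must be a monomial. First I would set $V$ to be the $\mathbb{C}$-span of $h_1,\dots,h_r$ inside $\widehat K$. The relation $h_j(z/p)=\sum_i t_{ij}h_i(z)$ says that the linear map $\rho:V\to\widehat K$, $h(z)\mapsto h(z/p)$, carries the spanning set $\{h_j\}$ into $V$, and since $T^{-1}$ is invertible it is in fact a linear automorphism of $V$. Thus $V$ is a finite-dimensional $\mathbb{C}$-subspace of $\widehat K$ stable under $\sigma^{-1}$ (i.e. under $z\mapsto z/p$).

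The key structural step is to decompose $V$ under $\rho$. Put $\rho$ into, say, Jordan form, or simply pick any eigenvector: there is $0\ne h\in V$ and $c\in\mathbb{C}^\times$ with $h(z/p)=c\,h(z)$. Writing the Laurent expansion $h(z)=\sum_{k\in\mathbb{Z}}c_k z^k$ (with $c_k=0$ for $k$ sufficiently negative, since $h\in\widehat K=\mathbb{C}((z))$), the equation becomes $\sum_k c_k p^{-k} z^k = c\sum_k c_k z^k$, so $c_k(p^{-k}-c)=0$ for every $k$. Since $p>1$, the numbers $p^{-k}$ are pairwise distinct, hence at most one $k$ can have $c_k\ne0$; that is, $h$ is a single monomial $c_k z^k$, in particular a Laurent polynomial. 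More generally, for a generalized eigenvector one gets a finite triangular system and concludes, by induction on the size of the Jordan block together with the observation that a Laurent series $h$ with $h(z/p)=c\,h(z)+g(z)$ for a Laurent polynomial $g$ and $c$ not of the form $p^{-k}$ at the offending degrees is again a Laurent polynomial. Either way, $V$ has a basis consisting of Laurent polynomials.

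Finally, since every $h_i$ lies in $V$, and $V$ is spanned (over $\mathbb{C}$) by Laurent polynomials, each $h_i\in\mathbb{C}[z,z^{-1}]$, which is the assertion.

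The main obstacle is the non-semisimple case: if $\rho$ is not diagonalizable one cannot immediately reduce to the eigenvector argument, and one must handle generalized eigenvectors. The clean way around this is the triangularization argument above: choose a basis of $V$ in which $\rho$ is upper triangular, and argue on the basis vectors $v_1,\dots,v_d$ in order, using that $v_j(z/p)$ equals $\lambda_j v_j(z)$ plus a $\mathbb{C}$-combination of $v_1,\dots,v_{j-1}$, which by the inductive hypothesis is already a Laurent polynomial; then comparing Laurent coefficients of $v_j$ degree by degree shows all but finitely many vanish (the recursion for the coefficient of $z^k$ is forced once $\lambda_j\ne p^{-k}$, and there are only finitely many $k$ with $\lambda_j=p^{-k}$, namely at most one). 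This is elementary but is the one place where a little care is needed, so I would write that part out in full and treat the diagonalizable case as the illustrative special case.
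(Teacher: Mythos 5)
Your proof is correct and follows essentially the same route as the paper: reduce to the upper-triangular case by a constant change of basis, then induct, using that a Laurent series with $h(z/p)=ch(z)+g(z)$ for $g$ a Laurent polynomial must itself be a Laurent polynomial (the coefficient recursion $(p^{-k}-c)c_k=g_k$ forces $c_k=0$ for all but finitely many $k$). The only cosmetic difference is that you triangularize the substitution operator on the span of the $h_i$ rather than conjugating the matrix $T$ itself, which changes nothing of substance.
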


\begin{proof}
Let $C=(c_{kl})\in G(\mathbb{C})$ be such that $C^{-1}TC=\widetilde{T}$
is upper triangular. Write $\widetilde{T}^{-1}=(\widetilde{t}_{ij}).$
Replacing the column vector $h=\,{}^{t}(h_{1},\dots,h_{r})$ by the
vector $\widetilde{h}$ with
\[
\widetilde{h}_{j}=\sum_{k=1}^{r}c_{kj}h_{k}
\]
we get that
\[
\widetilde{h}_{j}(z/p)=\sum_{i=1}^{r}\widetilde{t}_{ij}\widetilde{h}_{i}(z)
\]
and if the $\widetilde{h}_{i}$ are Laurent polynomials, so are the
$h_{i}.$ We may therefore assume, without loss of generality, that
$T$ is upper triangular, so $t_{ij}=0$ for $i>j$. The equation
$h_{1}(z/p)=t_{11}h_{1}(z)$ is satisfied only if $h_{1}=z^{n}$ for
some $n\in\mathbb{Z}$ and $t_{11}=p^{-n}.$ We conclude that every
$h_{i}$ is in $\mathbb{C}[z^{-1},z]$ by induction on $i$, noting
that if $g\in\mathbb{C}[z^{-1},z]$ and
\[
h(z/p)=th(z)+g(z)
\]
then $h\in\mathbb{C}[z^{-1},z]$ as well.
\end{proof}
We can now prove the theorem.
\begin{proof}
Let $f$ be as in the theorem, Let $M\subset\widehat{K}$ be the $K$-subspace
spanned by $\sigma^{i}\tau^{j}f$. By the assumption that $f$ satisfies,
it is a finite dimensional space, and in fact
\[
r=\dim_{K}M\le nm.
\]
This $M$ is clearly invariant under the group $\Gamma$ generated
by $\Phi_{\sigma}=\sigma$ and $\Phi_{\tau}=\tau$ and is therefore
an elliptic $(p,q)$-difference module. Let $g_{1},\dots,g_{r}$ be
a basis of $M$ over $K$ with respect to which $\Phi_{\sigma}$ acts
like $A^{-1}$ and $\Phi_{\tau}$ acts like $B^{-1},$ where $A$
and $B$ are as in Theorem \ref{thm:Main Structure Theorem}, namely
\[
A(z)=U(z/p)TU(z)^{-1},\,\,\,B(z)=U(z/q)SU(z)^{-1}
\]
with $T,S\in G(\mathbb{C}).$ Use the matrix $U(z)=(u_{ij})$ to transform
the vector $g=\,{}^{t}(g_{1},\dots,g_{r})$ to a vector $h=\,^{t}(h_{1},\dots,h_{r})$
\[
h_{j}=\sum_{i=1}^{r}u_{ij}g_{i}\in\widehat{K}
\]
on which $\sigma$ acts via the scalar matrix $T^{-1}$, and $\tau$
via $S^{-1}$ (the $h_{i}$ need not be in $M$). By the Lemma, the
$h_{i}$ are Laurent polynomials. Since the entries $u_{ij}$ of $U(z)$
are in $R,$ so are the $g_{i}$, and hence also $f$.
\end{proof}


\begin{thebibliography}{Bez-Bou}
\bibitem[Ad]{Ad}Adamczewski, B.: \emph{Mahler's method}, in\emph{
}Documenta Mathematica Extra Volume: Mahler Selecta (2019), 95\textendash 122.

\bibitem[Ad-Be]{Ad-Be}Adamczewski, B., Bell, J.P.: \emph{A problem
about Mahler functions, }Ann. Sci. Norm. Super. Pisa \textbf{17} (2017),
1301-1355.

\bibitem[At]{At}Atiyah, M.F.: \emph{Vector bundles over an elliptic
curve}, Proc. London Math. Soc. \textbf{7} (1957), 414-452.

\bibitem[Bez-Bou]{Bez-Bou}J.-P. Bézivin and A. Boutabaa. \emph{Sur
les équations fonctionelles p-adiques aux q-différences.} Collect.
Math., \textbf{43} (1992), 125\textendash 140.

\bibitem[Bi]{Bi}Birkhoff G.D. \emph{The generalized Riemann problem
for linear differential equations and the allied problems for linear
difference and q-difference equations}, Proc. Nat. Acad. Sci. \textbf{49}
(1913), 521-568.

\bibitem[Co]{Co}Cobham, A.: \emph{On the Hartmanis-Stearns problem
for a class of tag machines, }Conference Record of 1968 Ninth Annual
Symposium on Switching and Automata Theory, Schenectady, New York
(1968), 51-60.

\bibitem[dS1]{dS1}de Shalit, E.: \emph{Criteria for periodicity and
an application to elliptic functions, }Canadian Math. Bull. Published
online by Cambridge University Press: 14 August 2020, pp. 1-11, arXiv:2001.11726.

\bibitem[dS2]{dS2}de Shalit, E.: \emph{Notes on the conjecture of
Loxton and van der Poorten}, seminar notes, available at: http://www.ma.huji.ac.il/\textasciitilde deshalit/new\_site/ln.htm

\bibitem[Mum]{Mum}Mumford, D.: \emph{Abelian varieties,} Tata Institute
of Fundamental Research, Bombay, Oxford University Press (1970).

\bibitem[Sa]{Sa}Sauloy, J.: \emph{Galois theory of fuchsian $q$-difference
equations, }Ann. Sci. de l'ENS, \textbf{36} (2003), 925-968.

\bibitem[Sch-Si]{Sch-Si}Schäfke, R., Singer, M.F.: \emph{Consistent
systems of linear differential and difference equations, }J. Eur.
Math. Soc. \textbf{21} (2019), 2751\textendash 2792.

\bibitem[vdPo]{vdPo}van der Poorten, A.J.: \emph{Remarks on automata,
functional equations and transcendence}, Séminaire de Théorie des
Nombres de Bordeaux (1986\textendash 1987), Exp. No. \textbf{27},
11pp.

\bibitem[vdP-Si]{vdP-Si}van der Put, M., Singer, M.F.: \emph{Galois
theory of difference equations}, Lecture Notes in Mathematics \textbf{1666,
}Springer-Verlag, 1997.

\bibitem[Zhu]{Zhu}Zhu, Xinwen: \emph{An introduction to affine Grassmannians
and the geometric Satake equivalence,} in: Geometry of Moduli Spaces
and Representation Theory. IAS/Park City Mathematics Series. No.24.
American Mathematical Society , Providence, RI, pp. 59-154.
\end{thebibliography}
\end{document}